\newsavebox\MesssBox
\newcommand{\boxenlen}[2]{\savebox{\MesssBox}{\includegraphics[height=#2cm]{#1.pdf} }
\parbox{\wd\MesssBox}{\includegraphics[height=#2cm]{#1.pdf}}}
\newcommand{\boxenlenb}[2]{\savebox{\MesssBox}{\includegraphics[height=#2ex]{#1.pdf} }
\parbox{\wd\MesssBox}{\includegraphics[height=#2ex]{#1.pdf}}}
\newcommand{\spiegely}[1]{\reflectbox{#1}}
\newcommand{\spiegelx}[1]{\rotatebox[origin=c]{-90}{\reflectbox{\rotatebox[origin=c]{90}{#1}}}}
\newcommand{\drehung}[1]{\rotatebox[origin=c]{180}{#1}}
\newcommand{\boxenleny}[2]{\spiegely{\boxenlen{#1}{#2}} }
\newcommand{\boxenlenbd}[2]{\drehung{\boxenlenb{#1}{#2}} }
\newcommand{\boxenlenby}[2]{\reflectbox{\boxenlenb{#1}{#2}} }
\newcommand{\boxenlenbx}[2]{\spiegelx{\boxenlenb{#1}{#2}} }
\newcommand{\comult}{\boxenlenb{dcomult}{4}}
\newcommand{\mult}{\boxenlenb{dmult}{4}}
\newcommand{\cobid}{\boxenlenb{did}{2}}
\newcommand{\twist}{\boxenlenb{dtwist}{4}}
\newcommand{\unit}{\boxenlenb{dunit}{3}}
\newcommand{\counit}{\boxenlenb{dcounit}{3}}
\renewcommand{\big}{\bBigg@{1}}
\renewcommand{\Big}{\bBigg@{1.5}}
\renewcommand{\bigg}{\bBigg@{2.4}}
\renewcommand{\Bigg}{\bBigg@{3.2}}
\newcommand{\bigo}{\bBigg@{1.2}}
\newcommand{\bigc}{\bBigg@{0.7}}
\DeclareMathOperator{\id}{id}
 \theoremstyle{plain}
\newtheorem{satz}{Theorem}[section]
\newtheorem*{satz*}{Theorem}
\newtheorem{lemma}[satz]{Lemma}
\newtheorem{prop}[satz]{Proposition}
\newtheorem{kor}[satz]{Corollary}
\theoremstyle{definition}
\newtheorem{defi}[satz]{Definition}
 \newtheorem{bem}[satz]{Remark}
 \newtheorem{bsp}[satz]{Example}
 \newenvironment{bew}{\begin{proof}}{\end{proof}}
\numberwithin{equation}{section}
\newcommand{\C}{\mathbb{C}}
\newcommand{\vp}{\varphi}
\newcommand{\im}{\,\operatorname{im}\,}
\newcommand{\st}[1]{\EuScript{Y}_{#1}}	%attracting set in springer fiber
\newcommand{\stb}[1]{\widetilde{\EuScript{Y}}_{#1}} %attracting set in spaltenstein var
\newcommand{\St}[1]{S(#1)}
\newcommand{\com}[1]{Y_{#1}} %componente in springer fiber
\newcommand{\comb}[1]{\widetilde{Y}_{#1}} %componente in spaltenstein var
\newcommand{\fx}[1]{\mathcal{F}_\bullet({#1})} %fixed points
\newcommand{\tP}{\EuScript{P}}
\newcommand{\tQ}{\EuScript{Q}}
\newcommand{\up}{\wedge}
\newcommand{\down}{\vee}
\newcommand{\op}[1]{\operatorname{#1}}
\newcommand{\tS}{\mathcal{S}}
\newcommand{\hooklongrightarrow}{\lhook\joinrel\longrightarrow}
\newcommand{\CP}{\mathbb{CP}}
\newcommand{\pt}{pt}
\newcommand{\depG}[1]{\text{\emph{depG}}({#1})}
\newcommand{\ColCob}{\text{\emph{ColCob}}}
\newcommand{\Cob}{\text{\emph{Cob}}}
\newcommand{\BIGOP}[1]{\mathop{\mathchoice%
{\raise-0.22em\hbox{\huge $#1$}}%
{\raise-0.05em\hbox{\Large $#1$}}{\hbox{\large $#1$}}{#1}}}
\author{Gisa Schäfer}
\address{Department of Mathematics, University of Bonn, Endenicher Allee 60, 53115 Bonn, Germany}
\email{gischaef@math.uni-bonn.de}
\begin{document}

\title[2-block Spaltenstein varieties]{A graphical calculus for 2-block Spaltenstein varieties}

\begin{abstract}
We generalise statements known about Springer fibres associated to nilpotents with 2 Jordan blocks to Spaltenstein varieties. We study the geometry of generalised irreducible components (i.e. Bialynicki-Birula cells) and their pairwise intersections. In particular we develop a graphical calculus which encodes their structure as iterated fibre bundles with $\CP^1$ as base spaces and compute their cohomology. At the end we present a connection with coloured cobordisms generalising a construction of Khovanov and Stroppel.
\end{abstract}
\maketitle

\section{Introduction}%%%%%%%%%%%%%%%%%%%%%%%%%%%%%%%%%%%%%%%%%%%%%%%%%%%%%%%%%%%%%%%%%%%%%%%%%%%%%%%%%%%%%%%

For a given nilpotent endomorphism $N$ of $\mathbb{C}^n$, the Springer fibre is the subvariety of the variety of full flags given by the flags fixed under $N$. If we take partial flags instead of full ones, we get Spaltenstein varieties. 

In 1976, Spaltenstein showed that the irreducible components of the Springer fibre are in bijective correspondence with standard tableaux of shape given by the sizes of the Jordan blocks of $N$ \cite{spal}. He then deduced for the Spaltenstein varieties a bijection between its irreducible components and a certain subset of the standard tableaux. 

In general, the geometry of the irreducible components is not well understood. In 2003, Fung considered two special classes of Springer fibres \cite{fung}, including the one where the endomorphism $N$ has at most 2 blocks. 
In this case, he gave an explicit description of irreducible components of the Springer fibre and showed that they are iterated fibre bundles with $\CP^1$ as base spaces. In addition, he used cup diagrams to describe the structure of the irreducible components. 

In \cite{stroppel}, Stroppel and Webster expanded the use of cup diagrams for the description of components of 2-block Springer fibres. Moreover, they introduced what we call generalised irreducible components for Springer fibres. Generalised components are the closure of fixed point attracting cells for a certain torus action (namely the 2-dimensional torus of all diagonal matrices commuting with $N$). The set of generalised components contains the set of irreducible components. They computed the cohomology of generalised components and their pairwise intersections and showed that the intersections are iterated fibre bundles, in particular smooth. Furthermore, they defined a non-commutative convolution algebra structure on the direct sum of cohomologies $\bigoplus_{(w,w')} H^*(\st{w} \cap \st{w'})$ over all pairs of irreducible components or attracting cells, respectively. They used the cup diagram calculus to recover geometrically (using irreducible components) Khovanov's arc algebra \cite{khovcat} which was used in the original construction of Khovanov homology and (using generalised irreducible components) a slightly larger algebra appearing naturally in the Lie theoretic version of Khovanov homology \cite{stroppel2}. 

In this article we consider the special case of 2-block Spaltenstein varieties and generalise the theorems already known in the Springer fibre case. We develop a diagram calculus (dependence graphs and generalised cup diagrams) which describes the geometry in this case and enables us to compute diagrammatically the spaces $\bigoplus_{(w,w')} H^*(\stb{w} \cap \stb{w'})$. To describe an associative non-commutative algebra structure we have to develop the ideas of Khovanov and Stroppel further by introducing the notion of coloured cobordisms (Definition \ref{ColCob}) and realising $\bigoplus_{(w,w')} H^*(\stb{w} \cap \stb{w'})$ as the image of a monoidal functor (``coloured'' 2-dimensional TQFT) from the category of coloured cobordisms to vector spaces. As in the original construction of Khovanov, coloured cobordisms can be used to define an algebra structure on $\bigoplus_{(w,w')} H^*(\stb{w} \cap \stb{w'})$. 

We first use results from Spaltenstein's paper to get a bijective correspondence between irreducible components of Spaltenstein varieties and certain standard tableaux in Section \ref{sec:ZurückführungAufSpringerFasern}. Then, in Section \ref{sec:neuerAbsatz} we consider the theorem of Fung which explicitly describes the irreducible components of 2-block Springer fibres and generalise it to Spaltenstein varieties. 

In Section \ref{sec:fixpkte} we define and study generalised irreducible components which results in a description (Theorem \ref{row strict}) of these generalised components similar to \cite[Theorem 15]{stroppel}. After that, we give a bijective morphism from the generalised irreducible components of Spaltenstein varieties to those of certain Springer fibres (Theorem \ref{Abb nach Springer}).

Subsequently, in Section \ref{sec:Abhängigkeitsgraphen} we generalise the cup diagrams appearing in \cite{fung} and \cite{stroppel} by what we call \textit{dependence graphs}. These dependence graphs consist of labelled and coloured arcs. 
They describe the structure of generalised irreducible components $\stb{w}$ of Spaltenstein varieties visually (Theorem \ref{abh(w)}):
{\renewcommand{\thesatz}{\Alph{satz}}
\begin{satz}
$\stb{w}$ consist of all $N$-invariant flags satisfying the conditions of the dependence graph for $w$.
\end{satz}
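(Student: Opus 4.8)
The plan is to identify $\stb{w}$ with the locus $Z_w$ of $N$-invariant partial flags meeting every condition attached to an arc of the dependence graph of $w$, by showing that $Z_w$ is a smooth irreducible variety of the expected dimension which contains the attracting cell $C_w$ whose closure defines $\stb{w}$ (Section \ref{sec:fixpkte}). The conceptual reason, following Fung's argument in the Springer case, is that the dependence graph is designed to record exactly the linear-algebra coincidences (of the form $N F_j = F_{i-1}$, together with their refinements coming from the colouring and the labels) that are forced on a flag when it flows to its torus fixed point; so these conditions, being closed and torus-stable, hold on all of $\overline{C_w}$, and the reverse inclusion is then obtained by a dimension count.

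\textbf{Step 1.} First I would analyse $Z_w$ as an iterated fibre bundle. Order the arcs of the dependence graph from the innermost (most nested) outward, breaking ties using the colours and labels as prescribed by the construction. Arguing by induction along this order: once all subspaces constrained by the arcs below are fixed, the condition imposed by the next cup, say joining points $i<j$, pins the relevant flag step down to the preimage under an $N$-determined map of a line in a distinguished two-dimensional subspace, so its solution set fibres over a $\CP^1$; unpaired (``ray'') strands and the colour/label data add no further moduli. This exhibits $Z_w$ as a smooth, connected, hence irreducible iterated $\CP^1$-bundle, with $\dim Z_w = \#\{\text{cups of the dependence graph of }w\}$. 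Along the way one also checks that $Z_w$ lies in the Spaltenstein variety and contains a single torus fixed point, namely the one indexed by $w$.

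\textbf{Step 2.} Next I would show $C_w \subseteq Z_w$ and conclude. The ``shape'' part of the inclusion reduces, via the bijective morphism to Springer fibre components of Theorem \ref{Abb nach Springer}, to the generalisation of Fung's cup-diagram description established in Section \ref{sec:neuerAbsatz}; the extra coloured and labelled conditions are then verified directly on the parametrisation of $C_w$ furnished by Theorem \ref{row strict} --- equivalently, by computing $\lim_{t \to 0} t \cdot F$ for $F \in C_w$ and reading off that the coincidences it produces are precisely those listed in the dependence graph. Since Theorem \ref{row strict} also gives $\dim C_w = \#\{\text{cups}\}$, and since an irreducible variety containing a locally closed subvariety of equal dimension is its closure, Step 1 yields $\stb{w} = \overline{C_w} = Z_w$; the fact that $Z_w$ contains a single fixed point ensures that the cell being closed off is indeed $C_w$ and not some other $C_{w'}$.

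\textbf{Main obstacle.} I expect the delicate point to be the bookkeeping in Step 1 for the coloured and labelled arcs: in the Springer setting all cups impose the ``same'' codimension-one condition and the nesting order makes the fibre-bundle induction transparent, whereas here one must check that the partial-flag constraints interact with the arc conditions so that each cup still contributes exactly one $\CP^1$ (no arc over- or under-determined) and that the refined ordering of arcs is genuinely compatible with the tower. The second point needing care is matching the purely combinatorial count $\#\{\text{cups}\}$ with the geometric dimension of $C_w$ coming from Theorem \ref{row strict}; once both are secured, the closing dimension argument finishes the proof.
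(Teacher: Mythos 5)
Your strategy --- cut out the locus $Z_w$ by the graph conditions, show it is an irreducible iterated $\CP^1$-bundle, show the cell sits inside it, and close up by a dimension count --- is genuinely different from the paper's proof, which never argues by dimension or irreducibility of $Z_w$. The paper instead reduces everything to Theorem \ref{row strict} (which identifies $\stb{w}$ inside $\comb{S(w)}$ by the frozen subspaces $F_i=\mathcal{F}_i(w)$ for $i\in(w_{\wedge}\cap S(w)_{\vee})\smallsetminus w_{\times}$) together with Theorem \ref{abh(S)}, and then proves a purely combinatorial \emph{equivalence} of the conditions of $\depG{w}$ with those of $\depG{S(w)}$ plus the frozen subspaces, by induction over nesting using Proposition \ref{bogen}. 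Your route could in principle be made to work, but as written it has concrete gaps.

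First, the dimension count is wrong: it is not true that ``each cup contributes exactly one $\CP^1$''. A green arc labelled $e_l$ (or $f_l$) forces $F_i=F_{i-1}+\left\langle e_l\right\rangle$ and hence contributes \emph{no} moduli; only the left endpoints of the black connected components are free, so $\dim Z_w$ is the number of \emph{black} cups in $eC(w)$ (this is the content of the remark following Theorem \ref{abh(w)}), not the number of all cups. Second, the claim that $Z_w$ contains a single torus fixed point is false: $\stb{w}$ contains at least $2^x$ fixed points, $x$ the number of black cups (see the remark after Theorem \ref{rote}); the claim is also unnecessary once $C_w\subseteq Z_w$ is shown directly. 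Third, and most seriously, the matching dimension $\dim C_w=\#\{\text{black cups}\}$ is not ``given by Theorem \ref{row strict}''; extracting it from Theorem \ref{row strict} requires Theorem \ref{abh(S)} plus a combinatorial analysis of which independents of $\comb{S(w)}$ get frozen by the extra conditions --- which is essentially the combinatorial comparison that constitutes the paper's actual proof. So the dimension argument presupposes the work it is meant to replace, while your Step 1, though sound in outline, front-loads all of Section \ref{sec:iteratedFib}.
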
 
We then extend this to pairwise intersections (Corollary \ref{Abh(w,w')}). 

Next, we use coloured circle diagrams as in \cite{stroppel2} to give a condition for the intersection of generalised irreducible components to be empty in Section \ref{sec:CupCapDiagrams}. 
In Section \ref{sec:iteratedFib} we show that generalised irreducible components and non-empty pairwise intersections of those form iterated fibre bundles, giving a proof that uses cup diagrams. From this we compute the cohomology of the generalised irreducible components and their non-empty intersections using a spectral sequence argument in Section \ref{sec:newSection}. 

Finally, in Theorem \ref{toPartI} we combine the above to see that we can calculate the cohomology of intersections of the generalised irreducible components $\stb w, \stb {w'}$ by counting the number of circles of a certain colour in the circle diagram $CC(w,w')$ associated to the corresponding pairs of row strict tableaux:
\begin{satz}\label{B}
	The following diagram commutes:
	\[\xymatrix{
	\text{pairs of row strict tableaux}  \ar[rrrr]_{\quad \quad \quad (w,w') \mapsto F(CC(w,w'))} \ar[dd] &&&&  \text{vector spaces} \ar[dd]^{\id}\\
	  \\
	 \text{pairs of }\stb w \ar[rrrr]_{(\stb w, \stb {w'}) \mapsto H^*(\stb w \cap \stb {w'}) } &&&&  \text{vector spaces}\\
	 }\]
\end{satz}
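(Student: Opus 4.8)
The plan is to show that, for every pair $(w,w')$ of row strict tableaux, the graded vector space $F(CC(w,w'))$ is canonically isomorphic to $H^*(\stb w \cap \stb {w'})$, and that the isomorphisms so obtained are exactly the identifications implicit in the two vertical maps of Theorem \ref{B}, so that the square commutes on the nose rather than merely up to isomorphism. The argument splits along the dichotomy $\stb w \cap \stb{w'} = \emptyset$ versus $\stb w \cap \stb{w'} \neq \emptyset$, the dividing line being precisely the combinatorial condition on $CC(w,w')$ established in Section \ref{sec:CupCapDiagrams}.

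In the empty case, Section \ref{sec:CupCapDiagrams} tells us that $\stb w \cap \stb{w'}$ is empty exactly when $CC(w,w')$ contains a forbidden configuration (a circle or arc forced to carry incompatible colourings). I would then check, directly from the defining rules for the functor $F$ on coloured cobordisms in Definition \ref{ColCob}, that any such diagram is sent to the zero vector space: the offending circle or arc forces $F$ to evaluate an operation that is identically zero. Hence $F(CC(w,w')) = 0 = H^*(\emptyset)$ and the square commutes trivially here.

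In the non-empty case I would first invoke Theorem \ref{abh(w)} together with Corollary \ref{Abh(w,w')} to identify $\stb w \cap \stb{w'}$ with the space of $N$-invariant flags satisfying the combined dependence-graph conditions of $w$ and $w'$, whose underlying superimposed graph is by construction the one carried by $CC(w,w')$. Feeding this into Section \ref{sec:iteratedFib} realises the non-empty intersection as an iterated fibre bundle with base spaces $\CP^1$, one $\CP^1$ for each free circle of $CC(w,w')$, the colour of the circle dictating the cohomological position of the corresponding class. Section \ref{sec:newSection} then shows, by the spectral-sequence argument carried out there, that the associated Leray--Hirsch type spectral sequence degenerates and $H^*(\stb w \cap \stb{w'})$ is free, hence equal as a graded vector space to the tensor product over the circles of $CC(w,w')$ of copies of $H^*(\CP^1)$, each graded according to its colour. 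Unwinding Definition \ref{ColCob} and the monoidality of $F$ shows that $F(CC(w,w'))$ is, by definition, this same tensor product over the circles, with the graded two-dimensional space attached to a circle of the given colour. Matching the two indexing sets of circles and checking that the colour-to-degree dictionary on the geometric side agrees with the grading conventions built into $F$ then gives the desired isomorphism, and since both vertical maps of Theorem \ref{B} are precisely the assignments just described and the right-hand map is the identity, the square commutes.

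The main obstacle is not a single hard step but the combinatorial bookkeeping in the non-empty case: one must set up the bijection between the $\CP^1$-factors of the iterated fibre bundle of Section \ref{sec:iteratedFib} and the circles of $CC(w,w')$ so that it does not depend on the order in which the bundle is assembled, and simultaneously track the colours through the dependence-graph picture so that the degree shifts predicted by the geometry of Section \ref{sec:newSection} coincide with those prescribed by Definition \ref{ColCob}. Verifying this compatibility uniformly over all colour patterns — in particular at the vertices of the dependence graph where arcs of different colours meet — is where the real work lies.
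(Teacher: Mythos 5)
Your route is the same as the paper's: split on emptiness via Theorem \ref{rote} (a red circle kills $F(CC(w,w'))$ by Definition \ref{F}, matching $H^*(\emptyset)=0$), and in the non-empty case combine the dependence-graph description of Corollary \ref{Abh(w,w')} with the iterated fibre bundle structure of Theorem \ref{bundles} and the spectral sequence computation of Corollary \ref{cohomology}. One correction is needed, because taken literally your description of the non-empty case would give the wrong answer. You write that $H^*(\stb w\cap\stb{w'})$ is ``the tensor product over the circles of $CC(w,w')$ of copies of $H^*(\CP^1)$, each graded according to its colour'', and that $F$ attaches ``the graded two-dimensional space'' to each circle. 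That is not the situation: a green circle contributes the one-dimensional space $\C$ under $F$ (Definition \ref{F}), not a degree-shifted $H^*(\CP^1)$, and geometrically it contributes no $\CP^1$ factor at all, since the subspaces attached to green arcs are completely determined. There is no colour-to-degree dictionary to verify and no order-independence issue for the bundle decomposition; the only combinatorial input needed beyond Corollary \ref{cohomology} is the observation (the remark following Corollary \ref{Abh(w,w')}) that the independent nodes of $\depG{w,w'}$ -- i.e.\ the number of $\CP^1$'s in the iterated bundle, i.e.\ the exponent $v$ in Corollary \ref{cohomology} -- are counted exactly by the \emph{black} circles of $CC(w,w')$. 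With that in place both sides equal $\left(\C[x]/(x^2)\right)^{\otimes b}$ with $b$ the number of black circles, and the square commutes.
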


In the last section we bring the theorem above in connection with coloured cobordisms and show in Theorem \ref{algebra}:
\begin{satz} \label{C}
There is an associative algebra structure on $\bigoplus_{w,w'}H^*(\stb w \cap \stb{w'})$ given by coloured cobordisms. 
\end{satz}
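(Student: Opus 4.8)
The plan is to mimic Khovanov's original TQFT-based construction of the arc algebra, as adapted by Stroppel and Webster, but with colours tracked throughout. By Theorem \ref{B} we already know that $H^*(\stb w \cap \stb{w'})$ is computed by applying the coloured TQFT functor $F$ to the circle diagram $CC(w,w')$, so it suffices to produce, for each triple $(w,w',w'')$, a multiplication map
\[
m\colon F(CC(w,w')) \otimes F(CC(w',w'')) \longrightarrow F(CC(w,w''))
\]
and to check associativity. First I would stack the circle diagrams: glue $CC(w,w')$ on top of $CC(w',w'')$ along the common row strict tableau $w'$, obtaining a diagram whose middle section consists of a cup--cap pattern that can be simplified by a sequence of elementary \emph{surgery} moves (the coloured analogues of Khovanov's saddle cobordisms), each move either merging two circles or splitting one, until only the ``through'' circles of $CC(w,w'')$ remain. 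Each such move is a coloured cobordism, so composing them gives a coloured cobordism from $CC(w,w') \sqcup CC(w',w'')$ to $CC(w,w'')$; applying $F$ yields the desired map $m$. Here Definition \ref{ColCob} of coloured cobordisms and the fact (established in the preceding section) that $F$ is a monoidal functor on that category are exactly what is needed to make these composites well-defined linear maps.

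Next I would verify that $m$ does not depend on the chosen sequence of surgery moves. This reduces, as in Khovanov, to a finite list of local relations among coloured cobordisms: the associativity/coassociativity of merge and split on a single component, the Frobenius (bialgebra-type) relation relating a merge followed by a split to a split followed by a merge, and the compatibility of these with the colour labels. I expect that the colours impose extra constraints---only surgeries compatible with the colouring of the arcs are allowed, so one must check that any two admissible sequences connecting the same source and target diagrams differ by these local moves \emph{within} the admissible class. This is the step I expect to be the main obstacle: in the uncoloured case the independence of the multiplication from the simplification order is already the technical heart of the construction, and colours introduce case distinctions that must all be dispatched. The CC-emptiness criterion from Section \ref{sec:CupCapDiagrams} should be used to rule out the degenerate configurations where no admissible surgery exists, matching the fact that $H^*(\stb w \cap \stb{w'}) = 0$ precisely when $CC(w,w')$ is ``non-admissible''.

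With a well-defined $m$ in hand, associativity of the algebra structure on $\bigoplus_{w,w'} H^*(\stb w \cap \stb{w'})$ follows by comparing the two ways of composing three stacked diagrams $CC(w,w')$, $CC(w',w'')$, $CC(w'',w''')$: both give, up to the local relations already established, the same coloured cobordism down to $CC(w,w''')$, hence the same map after applying $F$. Finally I would exhibit the unit: the element of $\bigoplus_w H^*(\stb w \cap \stb w)$ given by applying $F$ to the ``identity'' circle diagram $CC(w,w)$ with all circles labelled by the appropriate top class, and check that stacking it on either side of $CC(w,w')$ reduces, via surgery moves that cancel, to $CC(w,w')$ itself. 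This establishes Theorem \ref{C}. The only genuinely new ingredient relative to \cite{stroppel} is bookkeeping the colours through every move; the structural argument is otherwise parallel.
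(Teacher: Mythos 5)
Your proposal is correct and follows essentially the same route as the paper: the multiplication is defined by applying the coloured TQFT functor to the contraction cobordism obtained from stacking $CC(w,w')$ on $CC(w',w'')$ (after padding with green circles to normalise sizes), and associativity follows from the coloured associativity relations in the category of coloured cobordisms. The well-definedness issue you flag as the main obstacle is exactly what Theorem \ref{functorColCob} disposes of in advance, by checking that $F_C$ respects every admissible colouring of the standard $\Cob$ relations, so the paper's remaining argument is as short as you anticipate.
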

}

\section*{Acknowledgements} 
I would like to thank my advisor Prof.\ Catharina Stroppel for her encouraging support and helpful advice. I am grateful to the referee for the useful comments. 

%%%%%%%%%%%%%%%%%%%%%%%%%%%%%%%%%%%%%%%%%%%%%%%%%%%%%%%%%%%%%%%%%%%%%%%%%%%%%%%%%%%%%%%%%%%%%%%%%%%%%%%%%%%%%%55

\section{Spaltenstein varieties and first properties}
\label{sec:SpaltensteinVarietätenUndErsteEigenschaften}

We fix integers $n$ and $k$ with $n \geq 2k \geq 0$. 
Let $V$ be an $n$-dimensional complex vector space and let $N: V \to V$ be a nilpotent endomorphism of Jordan type $(n-k,k)$.
Explicitly, we equip $V$ with an ordered basis $\left\{e_1, \dots e_{n-k}, f_1, \dots f_k\right\}$ with the action of $N$ defined by $N(e_i) = e_{i-1}, N(f_i) = f_{i-1}$, where by convention, $e_0 = f_0 = 0.$

\begin{defi}
A \textit{partial flag of type $(i_1,\dots, i_m)$} (where $0<i_1 < \dots < i_m=n$) consists of subspaces $F_{i_l}$ of $V$ with $\dim F_{i_l} = i_l$ and $F_{i_1} \subset F_{i_2} \subset \dots \subset F_{i_m}$. The partial flags of type $(i_1,\dots, i_m)$ form a complex algebraic variety which we denote by $\op{Fl}(i_1, \dots, i_m)$. 

A partial flag is called \textit{$N$-invariant},
if $N F_{i_l} \subset F_{i_{l-1}}$ holds for all $l=1,\dots, m$, where $F_{i_0} := \{0\}$.

The variety of $N$-invariant partial flags of type $(i_1,\dots, i_m)$ is called \textit{Spaltenstein variety of type $(i_1,\dots, i_m)$} and denoted $\op{Sp}(i_1, \dots, i_m)$. 
\end{defi}

\begin{lemma} \label{dim}
For an $N$-invariant flag of type $(i_1,\dots, i_m)$ we have 
	\[\dim F_{i_l} \leq \dim F_{i_{l-1}} +2,
\]
hence $i_{l}-i_{l-1} \leq 2$.
\end{lemma}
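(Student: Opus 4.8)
The plan is to exploit the special structure of the nilpotent $N$, which has only two Jordan blocks, to bound the dimension jump in an $N$-invariant flag. The key observation is that for any subspace $W \subset V$ the quotient $W / (W \cap NV)$ injects into $V/NV$, and since $N$ has exactly two Jordan blocks, $\dim V/NV = 2$. Concretely, $NV$ is spanned by $\{e_1,\dots,e_{n-k-1},f_1,\dots,f_{k-1}\}$, so $V/NV$ is two-dimensional, spanned by the images of $e_{n-k}$ and $f_k$.

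First I would fix an $N$-invariant flag of type $(i_1,\dots,i_m)$ and consider the quotient map $\pi\colon V \to V/NV$. The defining condition $N F_{i_l} \subset F_{i_{l-1}}$ means that $F_{i_l} \cap NV$ has a controlled relationship with $F_{i_{l-1}}$: applying $N$ to $F_{i_l}$ lands inside $F_{i_{l-1}}$, and $N$ restricted to a complement of $\ker N$ is injective. So the idea is that the ``new directions'' in $F_{i_l}$ beyond $F_{i_{l-1}}$ must, after one application of $N$ or equivalently after projecting modulo $NV$, be detected in the 2-dimensional space $V/NV$. More precisely, I would argue that the composite $F_{i_l} \hookrightarrow V \xrightarrow{\pi} V/NV$ has kernel $F_{i_l}\cap NV$, and that $F_{i_l} \cap NV = N F_{i_l} + (F_{i_l} \cap \ker N)$ has dimension at most $\dim F_{i_{l-1}}$ — this is where $N F_{i_l}\subset F_{i_{l-1}}$ and $F_{i_l}\cap \ker N \subset \ker N \cap F_{i_l}$, combined with a dimension count on $N\colon F_{i_l}\to F_{i_{l-1}}$, come in. Then $\dim F_{i_l} \leq \dim(F_{i_l}\cap NV) + \dim V/NV \leq \dim F_{i_{l-1}} + 2$.

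The cleanest way to make the middle step precise: from $N F_{i_l} \subset F_{i_{l-1}}$ and the rank–nullity theorem applied to $N|_{F_{i_l}}\colon F_{i_l}\to F_{i_{l-1}}$ we get $\dim F_{i_l} = \dim N F_{i_l} + \dim(F_{i_l}\cap\ker N) \leq \dim F_{i_{l-1}} + \dim(F_{i_l}\cap\ker N)$. Since $\ker N = \langle e_1,\dots\rangle$... actually $\ker N = \langle e_1, f_1\rangle$ has dimension $2$ (the number of Jordan blocks), so $\dim(F_{i_l}\cap\ker N)\leq 2$, giving $\dim F_{i_l} \leq \dim F_{i_{l-1}} + 2$ immediately. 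This is in fact simpler than the quotient argument and avoids it entirely. The inequality $i_l - i_{l-1}\leq 2$ is then just a restatement since $\dim F_{i_l} = i_l$.

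The main (minor) obstacle is bookkeeping at the ends of the flag: the condition at $l=1$ uses the convention $F_{i_0}=\{0\}$, so one must check $N F_{i_1}\subset\{0\}$, i.e. $F_{i_1}\subset\ker N$, which forces $i_1\leq 2$ directly — consistent with the claim. Otherwise there is no real difficulty; the whole content is that $\dim\ker N = 2$ because $N$ has two Jordan blocks, so I would simply make sure the rank–nullity application is stated cleanly and note where the two-block hypothesis enters.
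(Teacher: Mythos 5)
Your argument is correct and is essentially the paper's own: the paper's proof is exactly the rank--nullity computation $\dim F_{i_l} = \dim NF_{i_l} + \dim(F_{i_l}\cap\ker N) \leq \dim F_{i_{l-1}} + 2$, using that $\dim\ker N = 2$ because $N$ has two Jordan blocks. The preliminary quotient-by-$NV$ discussion in your write-up is superfluous, as you yourself note.
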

\begin{bew}
This follows from the rank-nullity theorem ($\dim W = \dim \ker + \dim \im$) and the fact that $N$ has two Jordan blocks.
\end{bew}

\begin{bem} \label{N-inv}
Let $F_{\bullet} = (F_{j_1} \subset \dots \subset F_{j_r})$ be an $N$-invariant flag. Let $\{j_1, \dots, j_r = n\} \subset \{i_1, \dots , i_m\}$. Then $F'_{\bullet} = (F'_{i_1} \subset \dots \subset F'_{i_m})$ with $F'_s = F_s$ if $s = j_l = i_{l'}$ and $F'_s$ arbitrary otherwise is an $N$-invariant flag as well. 
\end{bem}

\begin{defi}
Let $(i_1, \dots, i_m) \in \mathbb{N}^m$ with $0<i_1 < \dots < i_m=n$. 
A \textit{tableau of shape $(n-k,k)$ of type $(i_1,\dots, i_m)$} is a Young diagram of shape $(n-k,k)$ filled with $(i_l - i_{l-1})$-times the entry $i_l$ for $l = 1, \dots, m$, where $i_0:=0$. 

In the following all tableaux will be of shape $(n-k,k)$ unless stated otherwise.

A \textit{row strict tableau of type $(i_1,\dots, i_m)$} is a tableau of type $(i_1,\dots, i_m)$ with strictly decreasing entries in the rows.

A \textit{standard tableau of type $(i_1,\dots, i_m)$} is a row strict tableau of type $(i_1,\dots, i_m)$ with decreasing entries in the columns.
\end{defi}

\begin{bem}
This is similar to the usual definition of semi-standard tableaux. However, we work with (strictly) decreasing instead of (strictly) increasing rows respectively columns. 
\end{bem}

\begin{bsp} Here is an example for a row strict tableau $w$ and a standard tableau $S$, respectively, of type $(1,3,4,5)$ and shape $(n-k,k)$ for $n=5, k=2$:
\[w = \young(431,53) \hspace{4,5cm} S=\young(531,43)
\]
\end{bsp}

\begin{bem} Note that because of the strictly decreasing rows every number appears at most twice in a row strict tableau and thus also in a standard tableau since we only have two rows. Consequently, we get $i_{l+1}-i_l \leq 2$. Note that this is the property that was proven in Lemma \ref{dim} for indexing set of the Spaltenstein variety. 
\end{bem}

\section{Reduction to Springer fibres}
\label{sec:ZurückführungAufSpringerFasern}

\begin{defi}
A Spaltenstein variety of type $(1,\dots, n)$ is called \textit{Springer fibre}.
\end{defi}

In the following, let $Y$ be the Springer fibre and let $\tS$ be the set of all standard tableaux of type $(1, \dots, n)$ and shape $(n-k,k)$.

In \cite{spal} Spaltenstein constructed a map $\pi: Y \to \tS$ (see also Vargas in \cite{vargas}) and showed:

\begin{satz} {(Spaltenstein-Vargas)} \label{1-1spal} ~\\
The set $\tS$ of standard tableaux of type $(1, \dots , n)$ is in natural bijection with the irreducible components of the Springer fibre $Y$ via $\sigma \mapsto \overline{\pi^{-1}(\sigma)}=:Y_{\sigma}$.
\end{satz}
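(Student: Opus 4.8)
The plan is to follow Spaltenstein's original strategy, specialised to the two-row case. First I would recall that $Y$ is a (possibly reducible) projective variety that carries an action of the algebraic group $Z := Z_{GL(V)}(N)^\circ$ of invertible elements commuting with $N$, and that $Y$ is equidimensional of known dimension. The key combinatorial input is the map $\pi\colon Y \to \tS$: given an $N$-invariant full flag $F_\bullet = (F_0 \subset F_1 \subset \dots \subset F_n)$, one reads off a standard tableau $\pi(F_\bullet)$ by recording, for each $l$, in which row the box labelled $l$ is placed, the rule being determined by whether $N$ acts surjectively from $F_l/F_{l-1}$ onto a suitable subquotient — equivalently, by comparing $\dim(F_l \cap \op{im} N^j)$ for the two values of $j$ relevant to a two-block nilpotent. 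One checks that this data really does give a tableau of shape $(n-k,k)$ with strictly decreasing rows and columns, using Lemma \ref{dim} to see that consecutive boxes cannot both land in the bottom row in an inconsistent way; this is the content of the Spaltenstein–Vargas construction and I would simply cite \cite{spal},\cite{vargas} for the detailed verification that $\pi$ is well defined.

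Next I would establish the two properties that make $\sigma \mapsto \overline{\pi^{-1}(\sigma)}$ a bijection onto the irreducible components. Property one: each fibre $\pi^{-1}(\sigma)$ is irreducible, smooth, locally closed, and of dimension equal to $\dim Y$; hence its closure $Y_\sigma$ is an irreducible component. One way to see the irreducibility and the dimension count is to exhibit $\pi^{-1}(\sigma)$ as an iterated tower of $\CP^1$- and affine-space bundles over a point (this is precisely the kind of fibre-bundle structure that the later sections of the paper develop in the Spaltenstein generality, so in the full-flag case it is available here), or alternatively to use the $Z$-orbit description: $\pi$ is constant on $Z$-orbits and each fibre is a single $Z$-orbit closure's smooth locus. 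Property two: the fibres $\pi^{-1}(\sigma)$ for distinct $\sigma$ are pairwise disjoint and their union is all of $Y$, which is immediate since $\pi$ is a well-defined function on $Y$; therefore $Y = \bigcup_{\sigma \in \tS} \overline{\pi^{-1}(\sigma)}$ is a finite union of irreducible closed subsets, each of full dimension, so no one of them is contained in another, and they are exactly the irreducible components. Injectivity of $\sigma \mapsto Y_\sigma$ follows because $\pi^{-1}(\sigma)$ is dense in $Y_\sigma$ and the $\pi^{-1}(\sigma)$ are disjoint, so $Y_\sigma = Y_{\sigma'}$ forces $\sigma = \sigma'$; surjectivity onto components follows from the covering statement plus equidimensionality.

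The "natural" part of the bijection I would address by noting that $\pi$ is $Z$-equivariant (with $Z$ acting trivially on the finite set $\tS$) and is compatible with the combinatorics of the associated quiver variety / Hecke-algebra action, but for the purposes of this theorem "natural" can be read simply as "canonical, induced by the explicit map $\pi$", and no extra work is needed.

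The main obstacle is the verification that each $\pi^{-1}(\sigma)$ has dimension exactly $\dim Y$ — i.e. that $\pi$ is "balanced" so that every fibre is top-dimensional. In Spaltenstein's treatment this is where the real content lies; in the 2-block situation it can be extracted from the explicit iterated-bundle description of the fibres, counting the number of $\CP^1$-factors against the known formula $\dim Y = $ (number of rows of length $\geq$ something) — but since Theorem \ref{1-1spal} is quoted as a known result of Spaltenstein and Vargas, I would present the above as a sketch and defer the delicate dimension count to \cite{spal},\cite{vargas}, emphasising instead the parts (well-definedness of $\pi$ via Lemma \ref{dim}, disjointness of fibres, closure = component) that are elementary and self-contained.
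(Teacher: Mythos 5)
The paper gives no proof of this statement at all: it is quoted as a known result of Spaltenstein and Vargas, with the construction of $\pi$ and all verifications deferred to \cite{spal} and \cite{vargas}. Your sketch is a faithful outline of exactly that cited argument (fibres of $\pi$ are irreducible, locally closed and of constant dimension, hence their closures are precisely the irreducible components), and you correctly isolate and defer the one genuinely non-trivial point — the dimension count for the fibres — to the same references, so this matches the paper's treatment; the only caveats are minor: the open strata $\pi^{-1}(\sigma)$ are iterated affine-space bundles (the $\CP^1$'s appear only in the closures), and they are not single $Z$-orbits in general, so that alternative route should be dropped.
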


This theorem holds even if $N$ of arbitrary Jordan type. In general, it is complicated to calculate the closure $\overline{\pi^{-1}(\sigma)}$. But in our special case, where we only have two Jordan blocks, Fung explicitly determined how the irreducible components associated with a standard tableau look like: 
\begin{satz} {\rm(\cite[Theorem 5.2]{fung})}\\
Let $N$ be a nilpotent map of
Jordan type $(n-k,k)$, and let $\sigma$ be a standard tableau of shape $(n-k,k)$. Then the component $Y_{\sigma}$ of
the Springer fibre $Y$ consists of all flags whose
subspaces satisfy the following conditions:
\begin{itemize}
	\item for each $i$ \[ F_i \subset N^{-1}(F_{i-1}) 
\]
\item if $i$ is on the top row of the tableau $\sigma$ and $i-1$ is on the
bottom row, then
\[ F_i= N^{-1}(F_{i-2})
\]
\item if $i$ and $i-1$ are both in the top row of $\sigma$, then
\begin{itemize}
	\item if $F_{i-1} =
N^{-d}(F_{r})$ where $r$ is on the bottom row, then
\[ F_i = N^{-d-1}(F_{r-1}) \]
 \item if $F_{i-1} = N^{-d}(\im N^{n-k-j})$ where $0 \leq j < n-2k$ \footnote[1]{There is a typing error in Fung's paper:
If we have $k < a \leq n-k$ and then replace $a$ by $n-k-j$, we get for $j$ the inequality $n-2k > j \geq 0$ and not $k > j \geq 0$.}
, then
\[ F_i= N^{-d}(\im N^{n-k-j-1}).
\]
\end{itemize}
\end{itemize}
(Here $0$ is thought of being in the top row, $\{0\}=F_0 = \im N^{n-k}$)
\end{satz}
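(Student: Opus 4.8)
The statement is Fung's \cite[Theorem 5.2]{fung}; here is the line of argument I would follow. By the Spaltenstein--Vargas bijection (Theorem~\ref{1-1spal}) the component attached to $\sigma$ is $Y_\sigma=\overline{\pi^{-1}(\sigma)}$, where $\pi^{-1}(\sigma)$ is the locally closed, irreducible, dense subset of $Y_\sigma$ consisting of the $N$-invariant flags along which the Jordan type of the induced nilpotent grows according to the pattern prescribed by $\sigma$. So it suffices to identify $Y_\sigma$ with the subvariety $Z_\sigma\subseteq\op{Fl}(1,\dots,n)$ cut out by the four displayed conditions.

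The first condition $F_i\subset N^{-1}(F_{i-1})$ is just $NF_i\subset F_{i-1}$, so every flag of $Z_\sigma$ is $N$-invariant and, the remaining conditions being closed, $Z_\sigma$ is a closed subvariety of $Y$. Since $Y$ is equidimensional (Spaltenstein) of dimension $\sum_i\binom{\lambda_i'}{2}=k$ for $\lambda=(n-k,k)$, it is enough to prove that $Z_\sigma$ is irreducible of dimension $k$ and that the generic Jordan-type pattern along flags of $Z_\sigma$ is the one prescribed by $\sigma$: the first statement forces $Z_\sigma$ to be a full component $Y_\tau=\overline{\pi^{-1}(\tau)}$, and the second, evaluated on the dense subset $\pi^{-1}(\tau)\subset Z_\sigma$, gives $\tau=\sigma$.

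For irreducibility and the dimension count I would filter $Z_\sigma$ by the partial-flag varieties $Z_\sigma^{(j)}\subseteq\op{Fl}(1,\dots,j)$ obtained by imposing only the conditions involving $F_1,\dots,F_j$, so that $Z_\sigma=Z_\sigma^{(n)}$ and the forgetful maps $Z_\sigma^{(j)}\to Z_\sigma^{(j-1)}$ have fibres that are, at each point, either a single point (when the conditions pin $F_j$ down: exactly the case $j$ on the top row of $\sigma$) or a $\CP^1$ (when $j$ is on the bottom row, since then only $F_j\subset N^{-1}(F_{j-1})$ is imposed and the relevant quotient is $2$-dimensional). By induction on $j$, $Z_\sigma^{(j)}$ is then an iterated $\CP^1$-bundle over a point --- in particular smooth and irreducible --- of dimension equal to the number of bottom-row boxes of $\sigma$ that are $\le j$, which for $j=n$ is $k$. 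The Jordan-type computation is carried out along the same induction: at a bottom-row step a generic line in the $2$-dimensional quotient lengthens the second row, while at a top-row step the prescribed subspace lengthens the first.

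The real content, hidden in the parentheticals above, is the bookkeeping: one must verify that along \emph{every} flag of $Z_\sigma$ each subspace $F_i$ has a normal form of precisely the type occurring in the hypotheses --- $N^{-d}(F_r)$ with $r$ on the bottom row, or $N^{-d}(\im N^{\,n-k-j})$ with $0\le j<n-2k$ (starting from $F_0=\{0\}=\im N^{n-k}$) --- that this normal form is propagated by each of the four steps, and that the identity $\dim N^{-1}(W)=\dim(W\cap\im N)+2$ (which uses that $N$ has exactly two Jordan blocks) yields the dimension $i$ at every top-row step and a $2$-dimensional quotient $N^{-1}(F_{i-1})/F_{i-1}$ at every bottom-row step. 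This is a case analysis on the local shape of $\sigma$ at each $i$, matching the four cases in the statement; it is the combinatorial heart of the theorem, and it is exactly what the cup-diagram --- later, dependence-graph --- formalism is designed to organise.
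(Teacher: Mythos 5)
First, a remark on the comparison itself: the paper does not prove this statement --- it is quoted (with a corrected typo) from Fung's article and used as a black box --- so there is no internal argument to measure your proposal against; I can only judge the sketch on its own terms. Your overall strategy --- show that the locus $Z_\sigma$ cut out by the listed conditions is a closed irreducible subvariety of $Y$ of dimension $k$, invoke Spaltenstein's equidimensionality ($\dim Y=\sum_i\binom{\lambda_i'}{2}=k$ for $\lambda=(n-k,k)$) to conclude it is a full component, and then identify it as $Y_\sigma$ by computing the generic value of the Spaltenstein--Vargas map --- is sound and is essentially the shape of Fung's own argument.

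The difficulty is that, as written, the sketch asserts two things \emph{before} the ``bookkeeping'' that it defers to the end, while both assertions are consequences of that bookkeeping; and that bookkeeping is precisely the content of the theorem. (i) The second, third and fourth conditions are conditionals (``if $F_{i-1}=N^{-d}(F_r)$, then \dots''), so $Z_\sigma$ is not visibly cut out by closed conditions; it becomes closed only after one has shown that along every flag of $Z_\sigma$ each $F_{i-1}$ with $i$ in the top row genuinely acquires one of the two normal forms, so that the conditionals always fire and reduce to honest equations $F_i=(\text{explicit subspace})$. (ii) The claim that at a bottom-row step the fibre of $Z_\sigma^{(j)}\to Z_\sigma^{(j-1)}$ is $\CP^1$ is, by your own identity $\dim N^{-1}(W)=\dim(W\cap\im N)+2$, equivalent to $F_{j-1}\subset\im N$, and this is \emph{not} automatic for an $N$-invariant flag (for $k=1$, $n=3$ the line $F_1=\langle f_1\rangle$ is $N$-invariant but not contained in $\im N=\langle e_1\rangle$, so the fibre degenerates to a point); it holds only because the normal forms propagated by the top-row rules land inside $\im N$ exactly at the positions where $\sigma$ has a bottom-row entry. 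Consequently the induction on $j$ cannot be run first and the normal forms checked afterwards: the normal-form propagation, the dimension counts, and the fibre identifications must be established simultaneously in a single induction, by the case analysis you correctly name as the combinatorial heart. Since that case analysis is omitted, what you have is a correct and well-organised plan for the proof rather than a proof.
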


We want to generalise Fung's theorem to Spaltenstein varieties. For this purpose, we first generalise Spaltenstein's theorem.

\begin{defi}
Let $\widetilde{Y}$ be a Spaltenstein variety of type $(i_1, \dots, i_m)$. Let $F_{\bullet} = (F_{i_1} \subset \dots \subset F_{i_m}) \in \widetilde{Y}$. We call the set 
	\[X = X(F) := \left\{ F'_{\bullet} = (F'_1 \subset F'_2 \subset \dots \subset F'_n) : \dim F'_i = i, F'_{i_l} = F_{i_l} \forall l = 1, \dots, m \right\}
\]
the set of the \textit{full flags associated with the partial flag} $F$.
\end{defi}

\begin{defi} (\cite[p. 455]{spal}) \\
Let $I \subset \{1, \dots, n-1\}$.
Then a \textit{subspace of type} $I$ is a set of flags in the flag variety of the form $$\left\{(F_1 \subset \dots \subset F_n) : F_j \text{ is fixed for all } j \in \{1,\dots, n\} \setminus I \right\}.$$

We call $Z$ an $I$\textit{-variety} if it is a union of subspaces of type $I$.
\end{defi}

\begin{bsp} \label{I-var}
\begin{enumerate}[a)]
	\item Let $F_{\bullet}$ be a partial flag in the Spaltenstein variety $\widetilde{Y}$ of type $(i_1, \dots, i_m)$. Then the set $X$ of full flags associated with the partial flag $F_{\bullet}$ is a subspace of type $I =  \{1, \dots, n-1\} \setminus \{i_1, \dots, i_m \}$ in the Springer fibre $Y$.
	\item For $\sigma$ a standard tableau let $I_{\sigma} = \{i |\sigma_i \leq \sigma_{i+1} \}$ where $\sigma_i$ is the number of the column of $\sigma$ in which we find $i$. Spaltenstein showed in \cite[S. 455]{spal} that $Y_{\sigma}$ is an $I_{\sigma}$-variety and $I_{\sigma}$ is maximal with respect to inclusion with this property.
\end{enumerate}
\end{bsp}

\begin{lemma} \label{Isubspace} 
Let $Y$ be the Springer fibre.
\begin{enumerate} [a)]
\item \label{teil1} Let $U$ be a subspace of type $I =  \{1, \dots, n-1\} \setminus \{j_1, \dots, j_r \}$ in the Springer fibre $Y$. Then we have $N F_{j_l} \subset F_{j_{l-1}}$ for every $F_{\bullet} = (F_1, \dots, F_n) \in U$.
\item \label{teil2} Let $U$ be a subspace of type $I$ in $Y$. Then there are no consecutive numbers in $I$.
	\end{enumerate}
\end{lemma}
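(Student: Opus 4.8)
The plan is to deduce both parts from a single claim about the fixed subspaces of the flags in $U$. For $F_\bullet\in U$ the subspace $F_{j_l}$ does not depend on the chosen flag (since $j_l\notin I$); write $G_{j_l}:=F_{j_l}$, and put $j_0:=0$, $j_{r+1}:=n$, so that $G_{j_0}=\{0\}$ and $G_{j_{r+1}}=V$ are fixed too. The \emph{Key Claim} I aim for is: $NG_{j_l}\subseteq G_{j_{l-1}}$ for every $l=1,\dots,r+1$. Part \ref{teil1} is then immediate, being this claim for $l\le r$ read back as $NF_{j_l}\subseteq F_{j_{l-1}}$. Part \ref{teil2} also follows, because the Key Claim says exactly that the partial flag $G_{j_1}\subset\dots\subset G_{j_r}\subset V$ of type $(j_1,\dots,j_r,n)$ is $N$-invariant, so Lemma \ref{dim} gives $j_l-j_{l-1}\le 2$ for all $l$; this is precisely the assertion that no two consecutive integers lie in $I$ (a gap $j_l-j_{l-1}\ge 3$ would put $j_{l-1}+1,j_{l-1}+2\in I$, and conversely two consecutive elements of $I$ force such a gap).

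To prove the Key Claim, fix $l$. If $j_l=j_{l-1}+1$ the inclusion is immediate from $N$-invariance of any flag in $U$: $NG_{j_l}=NF_{j_l}\subseteq F_{j_l-1}=F_{j_{l-1}}=G_{j_{l-1}}$. Otherwise the index $j_{l-1}+1$ lies strictly between the consecutive fixed indices $j_{l-1}<j_l$, hence it is not one of $j_1,\dots,j_r$ and is neither $0$ nor $n$, so $j_{l-1}+1\in I$ and $F_{j_{l-1}+1}$ is unconstrained. More precisely, every subspace $W$ with $G_{j_{l-1}}\subseteq W\subseteq G_{j_l}$ and $\dim W=j_{l-1}+1$ occurs as the term of index $j_{l-1}+1$ of some flag in $U$: interpolate a full flag through $W$ and through the fixed partial flag $G_{j_1}\subset\dots\subset G_{j_r}$, which is possible because $j_{l-1}$ and $j_l$ are \emph{consecutive} fixed indices, so nothing fixed lies strictly between $G_{j_{l-1}}$ and $G_{j_l}$ to obstruct the interpolation (the other gaps being filled independently). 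Since $U\subseteq Y$, that flag is $N$-invariant, whence $NW\subseteq F_{j_{l-1}}=G_{j_{l-1}}$. Finally $\bigcup_W W=G_{j_l}$, because a vector of $G_{j_l}$ is either in $G_{j_{l-1}}$ or has nonzero image in $G_{j_l}/G_{j_{l-1}}$ and then lies in the preimage of the line it spans; therefore $NG_{j_l}=\bigcup_W NW\subseteq G_{j_{l-1}}$.

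If one would rather keep part \ref{teil2} independent of Lemma \ref{dim}, one can argue directly once the Key Claim is in hand: $\dim(\ker N\cap G_{j_l})=\dim G_{j_l}-\dim NG_{j_l}\ge j_l-j_{l-1}$, so a gap of size $\ge 3$ would force $\dim\ker N\ge 3$, contradicting that $N$ has two Jordan blocks. In either case, the one genuinely delicate point is the interpolation step in the Key Claim --- that an arbitrary $W$ of the right dimension sitting between two \emph{consecutive} fixed subspaces still extends to a full flag lying in $U$. This is exactly where consecutiveness of the fixed indices is used, and it is also what ensures the subspaces $W$ sweep out all of $G_{j_l}$; everything else is a one-line dimension count.
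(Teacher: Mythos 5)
Your proof is correct and follows essentially the same route as the paper's: both arguments exploit that a subspace of type $I$ contains \emph{all} full flags extending the prescribed fixed subspaces, so an unconstrained intermediate subspace adjacent to a fixed index ranges over every admissible value, and the $N$-invariance of each such flag then forces $NF_{j_l}\subseteq F_{j_{l-1}}$. The only (immaterial) difference is that the paper varies the free subspace $F_{j_l-1}$ just below the fixed index $j_l$ and intersects, obtaining $NF_{j_l}\subseteq\bigcap F_{j_l-1}=F_{j_{l-1}}$, whereas you vary the free subspace $W=F_{j_{l-1}+1}$ just above $j_{l-1}$ and take a union, obtaining $NF_{j_l}=\bigcup_W NW\subseteq F_{j_{l-1}}$; part b) is in both cases the same dimension count coming from $N$ having two Jordan blocks.
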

\begin{bew} 
Let $a > b \in \{1, \dots n \} \setminus I$ and assume all intermediate numbers are in $I$. 
	Then for all $F_{\bullet} = (F_1 \subset \dots \subset F_n) \in U$ we have that $F_a$ and $F_b$ are fixed and all possibilities for $F_{a-1}, \dots, F_{b+1}$ with $F_a \supset F_{a-1} \supset \dots \supset F_{b+1} \supset F_b$ appear. Since $F \in U \subset Y$, we have $N F_a \subset F_{a-1}$ for all possible choices of $F_{a-1}$, thus $N F_a$ lies in the intersection of all $F_{a-1}$. Since the subsets between $F_b$ and $F_{a-1}$ including $F_{a-1}$ are all not fixed, we have 
	\[F_b = \bigcap_{F_{a-1}: F_a \supset F_{a-1} \supset \dots  \supset F_b} F_{a-1} \supset N F_a.
\]%

Consequently, we get \ref{teil1}).
On the other hand, we conclude	 
\begin{align} \label{ungl}
	\dim F_b \geq \dim N F_a \geq \dim F_a -2,
\end{align}
where the last inequality holds, as in Lemma \ref{dim}, because $N$ is of Jordan type $(n-k,k)$. 

If we have $a > b+2$, we get $\dim F_a > \dim F_b +2$, which contradicts \eqref{ungl}. Thus, \ref{teil2}) holds.
\end{bew}

\begin{lemma}\label{spallemma}{\rm (\cite[p. 455]{spal})} ~\\
Let $Y$ be the Springer fibre.
Any subspace of type $I$ contained in $Y$ is contained in an irreducible component which is an $I$-variety.
\end{lemma}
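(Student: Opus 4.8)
The plan is to reduce the statement to Spaltenstein's theorem (Theorem \ref{1-1spal}) together with the fact recorded in Example \ref{I-var} b) that every irreducible component $Y_\sigma$ is an $I_\sigma$-variety with $I_\sigma$ maximal among subsets $I$ for which $Y_\sigma$ is an $I$-variety. So fix a subspace $U$ of type $I$ contained in $Y$. Since $U \subset Y$ and $Y = \bigcup_\sigma Y_\sigma$ is a finite union of irreducible components, and since $U$ is itself irreducible (a subspace of type $I$ is, after fixing the finitely many $F_j$, $j\notin I$, a product of flag varieties $\prod \op{Fl}(\cdots)$ — here one uses Lemma \ref{Isubspace}\ref{teil2} so that the unfixed steps come in isolated blocks of length at most the bound in Lemma \ref{dim}, each such block contributing an honest projective flag variety), its closure $\bar U$ is irreducible and hence contained in some single $Y_\sigma$. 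Thus the first step is: $U \subset Y_\sigma$ for some standard tableau $\sigma$.

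Next I would argue that $I \subset I_\sigma$. The key observation is that $Y_\sigma$, being an $I_\sigma$-variety, is stable under the operation of replacing the subspaces $F_j$ with $j \in I_\sigma$ by arbitrary ones compatible with the flag (this is the content of Remark \ref{N-inv} combined with Example \ref{I-var} b)). Concretely: take any point $F_\bullet \in U$; since $U$ has type $I$, for every $j \in I$ and every admissible choice of $F_j$ the modified flag still lies in $U \subset Y_\sigma$. Therefore $Y_\sigma$ contains, together with any of its points, all flags obtained by varying the $j$-th subspace for $j \in I$; but $Y_\sigma$ is an $I_\sigma$-variety and $I_\sigma$ is \emph{maximal} with this property, so this forces $I \subset I_\sigma$ — otherwise $I_\sigma \cup I$ would be a strictly larger set making $Y_\sigma$ an $(I_\sigma\cup I)$-variety, contradicting maximality. (One has to check that $Y_\sigma$ really is an $(I_\sigma \cup I)$-variety, i.e.\ that varying simultaneously over $I_\sigma$ and over $I$ stays inside $Y_\sigma$; this follows because varying over $I_\sigma$ alone already sweeps out $Y_\sigma$, and $U \subset Y_\sigma$ already tells us the $I$-directions are absorbed.)

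Finally, once $I \subset I_\sigma$, a subspace of type $I$ is in particular a subspace of type $I_\sigma$ after re-fixing the coordinates in $I_\sigma \setminus I$; so $U$ is contained in $Y_\sigma$, and $Y_\sigma$ is an $I_\sigma$-variety, hence a fortiori an $I$-variety, so we may also simply say $Y_\sigma$ is the desired $I$-variety component containing $U$. That completes the proof.

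The main obstacle I anticipate is the bookkeeping in the second step: making precise that "$U$ has type $I$ and $U \subset Y_\sigma$" implies "$Y_\sigma$ is stable under varying the $I$-directions at \emph{every} point of $Y_\sigma$, not just points of $U$." This is not automatic from $U \subset Y_\sigma$ alone — one needs that the $I$-varying moves, being realised inside $Y$ (Springer fibre) and compatible with $N$-invariance by Lemma \ref{Isubspace}\ref{teil1}, extend to a global $I$-variety structure on the irreducible component. The cleanest route is probably to invoke the maximality clause of Example \ref{I-var} b) contrapositively, as sketched, rather than to construct the global moves by hand; but verifying that $I_\sigma \cup I$ genuinely yields an $(I_\sigma\cup I)$-variety structure on $Y_\sigma$ is where the care is needed, and it is essentially the place where the two-block hypothesis (via Lemma \ref{dim} and Lemma \ref{Isubspace}\ref{teil2}, ensuring the varying blocks are short and independent) does its work.
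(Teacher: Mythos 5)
The paper does not prove this lemma at all --- it is quoted verbatim from Spaltenstein (p.\ 455) --- so there is no in-text argument to compare yours with; your proposal therefore has to stand on its own, and it has a genuine gap at its central step. Your first step is fine: by Lemma \ref{Isubspace} \ref{teil2}) the unfixed indices of $U$ are isolated, so $U$ is a product of $\CP^1$'s, hence irreducible and closed, hence contained in a single irreducible component $Y_\sigma$. The problem is the second step. To invoke the maximality of $I_\sigma$ you must first establish that $Y_\sigma$ is an $(I\cup I_\sigma)$-variety, i.e.\ that \emph{every} point of $Y_\sigma$ lies in a subspace of type $I$ contained in $Y_\sigma$. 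The hypothesis $U\subset Y_\sigma$ only gives this for the single type-$I$ fibre $U$. For a general point $q\in Y_\sigma$ the type-$I$ fibre through $q$ need not even lie in $Y$: by Lemma \ref{Isubspace} \ref{teil1}) and Remark \ref{N-inv} it does so exactly when the partial flag obtained from $q$ by forgetting the indices in $I$ is $N$-invariant in the strong sense $NF_{j_l}\subset F_{j_{l-1}}$, which is an extra closed condition not implied by $q\in Y$. So the sentence ``$U\subset Y_\sigma$ already tells us the $I$-directions are absorbed'' is precisely the assertion to be proved, as you yourself note. Worse, the strategy is fragile: the lemma only claims that \emph{some} component containing $U$ is an $I$-variety, whereas you fix an arbitrary component containing $\overline U$ and try to prove $I\subset I_\sigma$ for that one; nothing rules out that $U$ also sits in the boundary of a component $Y_\tau$ with $I\not\subset I_\tau$, for which your argument would prove a false statement. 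The correct component has to be selected, not taken at random.

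A workable route is global rather than pointwise. Let $p$ be the map forgetting the subspaces indexed by $I$ and let $Z_I=p^{-1}(\widetilde Y)$, where $\widetilde Y$ is the corresponding Spaltenstein variety. By Lemma \ref{Isubspace} \ref{teil1}) and Remark \ref{N-inv}, $Z_I$ is exactly the union of all subspaces of type $I$ contained in $Y$; it is closed, and $p\colon Z_I\to\widetilde Y$ is a locally trivial bundle with irreducible fibres, so every irreducible component of $Z_I$ is a union of fibres of $p$, i.e.\ an $I$-variety. The component $D$ of $Z_I$ containing $U$ lies in some component $Y_\sigma$ of $Y$, and to conclude $D=Y_\sigma$ (so that $Y_\sigma$ is the desired $I$-variety) one needs a dimension count, $\dim D=\dim\widetilde Y+|I|=\dim Y$, resting on the equidimensionality results of Spaltenstein's paper. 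That quantitative input, or an equivalent use of Spaltenstein's cell decomposition to pin down the component through the generic point of $U$, is the missing ingredient that your maximality argument cannot replace.
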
 

The following theorem is stated in \cite{spal} in slightly different notation and without proof, so we recall it here.

\begin{satz} \label{Spaltenstein} Let $\widetilde{Y}$ be a Spaltenstein variety of type $(i_1,\dots, i_m)$. Let $I$ $:=\{1, \dots, n-1\} \setminus \{i_1, \dots, i_m \}$, let $\tS$  be the set of standard tableaux of type $(1,2,\dots, n)$ and $I_{\sigma} = \{i |\sigma_i \leq \sigma_{i+1} \}$ for $\sigma \in \tS$, where $\sigma_i$ is the column number of $\sigma$ containing i. Then there is a canonical bijection
\begin{align*}
	\left\{\text{irreducible components of }\widetilde{Y}\right\} \stackrel{1:1}{\longleftrightarrow}\tS_I := \{\sigma \in \tS | I \subset I_{\sigma} \}.
\end{align*}
\end{satz}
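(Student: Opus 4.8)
The plan is to construct the bijection by passing through full flags and using Spaltenstein's original theorem together with the combinatorics of $I$-varieties established above. First I would observe that an irreducible component $\widetilde Z$ of $\widetilde Y$ of type $(i_1,\dots,i_m)$ determines, via Remark \ref{N-inv} and Example \ref{I-var} a), a subspace of type $I=\{1,\dots,n-1\}\setminus\{i_1,\dots,i_m\}$ inside the Springer fibre $Y$: namely take a generic partial flag $F_\bullet\in\widetilde Z$ and form the set $X(F)$ of all full flags associated with it. By Lemma \ref{Isubspace}\ref{teil1} this $X(F)$ is indeed contained in $Y$, so Lemma \ref{spallemma} applies: $X(F)$ lies in an irreducible component of $Y$ which is an $I$-variety. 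By Example \ref{I-var} b) the $I$-varieties among the components of $Y$ are exactly the $Y_\sigma$ with $I\subset I_\sigma$, i.e.\ with $\sigma\in\tS_I$. This is how I would define the map $\widetilde Z\mapsto\sigma$.

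Next I would set up the inverse. Given $\sigma\in\tS_I$, the component $Y_\sigma$ is an $I_\sigma$-variety with $I\subset I_\sigma$, hence in particular an $I$-variety, so it is a union of subspaces of type $I$; projecting a full flag to its subspaces of dimensions $i_1,\dots,i_m$ gives a well-defined map $Y_\sigma\to\op{Fl}(i_1,\dots,i_m)$ (well-defined precisely because the intermediate subspaces are the ``free'' ones and the others are determined). The image lands in $\widetilde Y$ because $N$-invariance of the full flag forces $NF_{i_l}\subset F_{i_l-1}\subset F_{i_{l-1}}$ (using Lemma \ref{Isubspace}\ref{teil2}, consecutive integers do not both lie outside $\{i_1,\dots,i_m\}$ only in the relevant configuration — more precisely one checks $F_{i_l-1}\subset F_{i_{l-1}}$ directly since all dimensions strictly between $i_{l-1}$ and $i_l$ are in $I$). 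The image is irreducible (continuous image of an irreducible variety) and I would argue it has the right dimension / is a full component by a dimension count, or alternatively by showing the two maps are mutually inverse on the level of components.

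The heart of the argument, and the step I expect to be the main obstacle, is showing that these two assignments are mutually inverse and in particular well-defined independently of choices: that the component $\widetilde Z$ is recovered as the image of $Y_\sigma$, and that distinct components $\widetilde Z\neq\widetilde Z'$ yield distinct $\sigma$. The subtlety is that a priori $X(F)$ could sit inside several components of $Y$, or different generic $F\in\widetilde Z$ could a priori land in different $Y_\sigma$'s. I would handle this by using the maximality clause in Example \ref{I-var} b) ($I_\sigma$ is maximal with the property that $Y_\sigma$ is an $I_\sigma$-variety) together with the fact that $\widetilde Z$, being a component of the Spaltenstein variety, is ``as big as possible'': the closure of $X(F)$ inside $Y$ is an $I$-variety (since $\widetilde Z$ is a variety, $X$ sweeps out a closed $I$-variety as $F$ ranges over $\widetilde Z$), and an irreducible $I$-variety in $Y$ of maximal dimension must be one of the $Y_\sigma$. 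Uniqueness then follows because the projection $\pi\colon Y\to\widetilde Y$ restricted to $Y_\sigma$ is surjective onto its image component and this image determines $Y_\sigma$ back via closure of preimage.

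Finally I would assemble the pieces: the map $\widetilde Z\mapsto\sigma$ and $\sigma\mapsto\overline{\pi(Y_\sigma)}$ (image of $Y_\sigma$ under forgetting to type $(i_1,\dots,i_m)$) are inverse bijections between irreducible components of $\widetilde Y$ and $\tS_I$, and canonicity is immediate from the construction since no choices survive. I would remark that this reproves Theorem \ref{1-1spal} when $(i_1,\dots,i_m)=(1,\dots,n)$, where $I=\emptyset$ and $\tS_I=\tS$.
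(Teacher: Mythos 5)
Your overall strategy coincides with the paper's (project full flags to partial flags, use Lemma \ref{spallemma} and the maximality of $I_\sigma$ to match components), but the justification of the crucial step --- that forgetting the subspaces with indices in $I$ sends a full flag in $Y_\sigma$, $\sigma\in\tS_I$, to an $N$-invariant \emph{partial} flag --- is wrong. You argue $NF_{i_l}\subset F_{i_l-1}\subset F_{i_{l-1}}$, but the second inclusion goes the wrong way: whenever $i_l-1\in I$, i.e.\ $i_l=i_{l-1}+2$, one has $\dim F_{i_l-1}=i_{l-1}+1>\dim F_{i_{l-1}}$, so $F_{i_l-1}\supset F_{i_{l-1}}$ and not conversely. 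Moreover your argument uses only $N$-invariance of the full flag, so it would apply to every $\sigma\in\tS$ and ``prove'' a bijection with all of $\tS$; this is false. For instance, for Jordan type $(2,1)$ on $\C^3$ and flag type $(2,3)$, the full flag $\langle e_1\rangle\subset\langle e_1,e_2\rangle\subset\C^3$ lies in the Springer fibre, yet its projection violates $NF_2\subset F_{i_0}=\{0\}$ since $NF_2=\langle e_1\rangle$. The hypothesis $I\subset I_\sigma$ must enter exactly here: because $Y_\sigma$ is an $I_\sigma$-variety, the full flag lies in an entire subspace of type $I$, so $NF_{i_l}$ is contained in \emph{every} admissible choice of the intermediate subspace $F_{i_l-1}$, hence in their intersection, which is $F_{i_{l-1}}$. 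That is precisely Lemma \ref{Isubspace}\ref{teil1}; you instead invoke part \ref{teil2} here, and cite part \ref{teil1} only for the claim that $X(F)\subset Y$, which is really Remark \ref{N-inv}.

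A secondary soft spot: the assertion that ``an irreducible $I$-variety in $Y$ of maximal dimension must be one of the $Y_\sigma$'' is neither proved nor needed, and the genericity of $F$ in the forward map is left hanging. It is cleaner to argue as the paper does: once well-definedness is fixed, $\op{pr}$ is surjective from $\bigcup_{\sigma\in\tS_I}Y_\sigma$ onto $\widetilde Y$ (Lemma \ref{spallemma} applied to $X(F)$ for each $F\in\widetilde Y$), so the finitely many closed irreducible sets $\op{pr}(Y_\sigma)$ cover $\widetilde Y$, and the identity $\op{pr}^{-1}(\op{pr}(Y_\sigma))=Y_\sigma$ (valid because $Y_\sigma$ is an $I$-variety) shows they are pairwise distinct and maximal, hence exactly the irreducible components. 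This removes any need for a dimension count or a choice of generic point.
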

\begin{bew} 
\begin{itemize}
	\item The map
\begin{eqnarray*}
  \op{pr}: \quad Z:= \bigcup_{I \subset I_{\sigma}} \com{\sigma}&\to& \widetilde{Y} \\
	(F_1 \subset \dots \subset F_n) &\mapsto& (F_{i_1} \subset \dots \subset F_{i_m})
\end{eqnarray*}
 given by forgetting the subsets of the full flag with indices in $I$ is well-defined:\\
Let $F_{\bullet}=(F_1 \subset \dots \subset F_n) \in Z$, so $F_{\bullet} \in Y_{\sigma}$ for a $\sigma$ with $I \subset I_{\sigma}$. By Example \ref{I-var} b) $Y_{\sigma}$ is an $I_{\sigma}$-variety, thus $F_{\bullet}$ is contained in a subspace of type $I_{\sigma}$. By Lemma \ref{Isubspace} \ref{teil1}) we have $N F_{j_l} \subset F_{j_{l-1}}$ for $l = 1, \dots, r$ and $I_{\sigma} = \{1, \dots, n-1 \} \setminus \{j_1, \dots , j_r \}$. Because of $I \subset I_{\sigma}$ we have $\{j_1, \dots, j_r\} \subset \{i_1, \dots, i_m \}$, and by Remark \ref{N-inv}, $(F_{i_1} \subset \dots \subset F_{i_m}) = \op{pr}(F)$ is an $N$-invariant flag.

\item $\op{pr}$ is surjective:\\
Let $F_{\bullet} \in \widetilde{Y}$ and let $X$ be the set of associated full flags. By Example \ref{I-var} a) $X$ is a subspace of type $I$ in $Y$. Hence, by Lemma \ref{spallemma} there exists an irreducible component $Y_{\sigma}$ with $X \subset \com{\sigma}$, where $\com{\sigma}$ is an $I$-variety. Therefore, we have $ I \subset I_{\sigma}$ because of the maximality of $I_{\sigma}$. So we have 
	\[ F \in \op{pr}(X) \subset \op{pr}(\com{\sigma}) \subset \op{pr}\left(\bigcup_{I \subset I_{\sigma}} \com{\sigma}\right).
\]
\item Using \cite[21.1]{hum} we get that $\op{pr}$ is a morphism of varieties mapping $Y_\sigma$ with $I \subset I_{\sigma}$ to an irreducible component $\op{pr}(Y_{\sigma})$ of $\widetilde{Y}$. Since $Y_{\sigma}$ is an $I$-variety, we have $\op{pr}^{-1}(\op{pr}(Y_{\sigma})) = Y_{\sigma}$, thus the $\op{pr}(Y_{\sigma})$ are distinct.
\end{itemize}
Altogether, there is a 1-1 correspondence between the irreducible components of $\widetilde{Y}$ and the set $\{Y_{\sigma} | I \subset I_{\sigma} \}$, thus also between the irreducible components of $\widetilde{Y}$ and the set $\{ \sigma \in \tS | I \subset I_{\sigma} \}= \tS_I$.
\end{bew}

The bijection from Theorem \ref{Spaltenstein} will be made explicit in the next section.

\section{Explicit description of irreducible components}
\label{sec:neuerAbsatz}

\begin{defi} \label{phi} Let $S$ be a standard tableau of type $(i_1,\dots, i_m)$. We associate  a standard tableau $S'$ of type $(1, \dots, n)$ with $S$ as follows: By definition, in a standard tableau of type $(i_1, \dots i_m)$ there are at most two entries $i_j$ for all $j$. If there are two entries $i_j$, then they have to be in different rows. Hence, we can associate a unique standard tableau of type $(1, \dots, n)$ with a standard tableau of type $(i_1, \dots, i_m)$ by changing the entry $i_j$ in the lower row to $i_j-1$ if there are two such entries in the tableau. This is possible because if $i_j$ is a double entry, then there is no $i_j-1$ in the tableau.

In this way we get an injective map
	\begin{align*}\vp: \quad &\{\text{standard tableaux of type }(i_1,\dots, i_m)\} \\
	&\quad \hooklongrightarrow \{\text{standard tableaux of type } (1, \dots, n) \}. 
\end{align*}
\end{defi}

\begin{bsp} For $n=8$ and $k = 3$ the standard tableau $\young(86431,763)$ of type $(1,3,4,6,7,8)$ is mapped to $\young(86431,752)$ by $\vp$.
\end{bsp}

To summarise, we obtain a bijection between irreducible components and standard tableaux of type $(i_1, \dots, i_m)$ as follows:
Every irreducible component of $\widetilde{Y}$ is the image of an irreducible component $Y_{\sigma}$ via $\op{pr}$ for $\sigma \in \tS_I$. To $Y_{\sigma}$ we assign a standard tableau $\sigma$ of type $(1, \dots, n)$ via the Spaltenstein-Vargas bijection from Theorem \ref{1-1spal}. As shown in the following theorem, $\sigma$ is in the image of $\vp$ and thus corresponds to a standard tableau $S$ of type $(i_1, \dots, i_m)$.

\begin{satz} \label{1-1}
The irreducible components of the Spaltenstein variety of type $(i_1,\dots, i_m)$ are in natural bijection with the standard tableaux of type $(i_1,\dots, i_m)$ and shape $(n-k,k)$.
\end{satz}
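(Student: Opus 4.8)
The plan is to assemble the bijection from the three ingredients already in place: the reduction $\op{pr}$ from Theorem \ref{Spaltenstein}, the Spaltenstein--Vargas bijection $\tS \cong \{\text{irreducible components of } Y\}$ of Theorem \ref{1-1spal}, and the injection $\vp$ of Definition \ref{phi}. By Theorem \ref{Spaltenstein} the irreducible components of $\widetilde Y$ are in bijection with $\tS_I = \{\sigma \in \tS \mid I \subset I_\sigma\}$, where $I = \{1,\dots,n-1\}\setminus\{i_1,\dots,i_m\}$. So it suffices to prove that $\vp$ restricts to a bijection
\[
\vp\colon \{\text{standard tableaux of type }(i_1,\dots,i_m)\text{ and shape }(n-k,k)\} \;\xrightarrow{\ \sim\ }\; \tS_I .
\]
Since $\vp$ is already known to be injective, the work is to identify its image with $\tS_I$, i.e.\ to show that a standard tableau $\sigma$ of type $(1,\dots,n)$ lies in the image of $\vp$ if and only if $I\subset I_\sigma$, and that every element of the image is a standard tableau of type $(i_1,\dots,i_m)$ of the prescribed shape (the shape is automatic, since $\vp$ does not change the Young diagram).

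First I would unwind the combinatorial meaning of $I \subset I_\sigma$. Recall $I_\sigma = \{i \mid \sigma_i \le \sigma_{i+1}\}$, where $\sigma_i$ is the column of $\sigma$ containing $i$; since there are only two rows, $i \in I_\sigma$ means that $i$ does not lie strictly to the right of $i+1$, equivalently $i$ is \emph{not} in a strictly higher column than $i+1$. The condition $I \subset I_\sigma$ thus says: for every $i \notin \{i_1,\dots,i_m\}$ we have $\sigma_i \le \sigma_{i+1}$. Next I would check the "only if" direction: given a standard tableau $S$ of type $(i_1,\dots,i_m)$, the construction in Definition \ref{phi} lowers each doubled entry $i_j$ in the bottom row to $i_j - 1$; I claim the resulting $\sigma = \vp(S)$ satisfies $\sigma_i \le \sigma_{i+1}$ for all $i \notin \{i_1,\dots,i_m\}$. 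The entries of $\sigma$ not of the form $i_j$ or $i_j-1$ are exactly those produced in pairs $(i_j-1, i_j)$ from a doubled $i_j$, or are forced positions; running through the cases (is $i$ a ``new'' bottom-row entry $i_j-1$? is $i+1 = i_j$ its top partner? are both $i, i+1$ outside the special set?) and using that $S$ has strictly decreasing rows and weakly decreasing columns, one verifies the inequality each time. Conversely, for the "if" direction, starting from $\sigma \in \tS_I$ I would define $S$ by raising, for each $i_j$ that is doubled-or-forced, the lower occurrence back up; the condition $I \subset I_\sigma$ is precisely what guarantees that the entries being merged sit in different rows and that no conflict (a pre-existing $i_j$ in the row) occurs, so that $S$ is a well-defined standard tableau of type $(i_1,\dots,i_m)$ with $\vp(S) = \sigma$. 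This exhibits the inverse map and finishes surjectivity onto $\tS_I$.

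The naturality/explicitness of the bijection then follows by composition: an irreducible component of $\widetilde Y$ is $\op{pr}(Y_\sigma)$ for a unique $\sigma \in \tS_I$ (Theorem \ref{Spaltenstein}), and $\sigma = \vp(S)$ for a unique standard tableau $S$ of type $(i_1,\dots,i_m)$; assign $S$ to that component. I expect the main obstacle to be the case analysis in matching $I \subset I_\sigma$ with ``$\sigma$ is in the image of $\vp$'': one must be careful about the boundary conventions ($0$ in the top row, $F_0 = \{0\}$, and the case $i_j - i_{j-1} = 1$ versus $=2$, using Lemma \ref{dim}), and about doubled entries that are adjacent in value to other special indices, where several of the bullet-cases in Definition \ref{phi} interact. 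Everything else — injectivity of $\vp$, preservation of shape, and the bijection of Theorem \ref{Spaltenstein} — is already available, so the proof is essentially this one combinatorial identification.
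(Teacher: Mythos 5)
Your proposal is correct and follows essentially the same route as the paper: reduce via Theorem \ref{Spaltenstein} to identifying the image of $\vp$ with $\tS_I$, and then unwind the condition $I\subset I_\sigma$ combinatorially (for $i\in I$, i.e.\ $i=i_j-1$ with $i_j$ a double entry, the condition $\sigma_i\le\sigma_{i+1}$ says exactly that $i$ sits in the bottom row and $i+1$ in the top row, which characterises $\im\vp$). The paper records this identification in two lines where you propose a fuller case analysis, but the underlying argument is the same.
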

\begin{bew}
We have 
\begin{align*}\tS_I &= \{\sigma \in \tS | \sigma_i \leq \sigma_{i+1} \forall i \in I\}\\
&= \{\sigma \in \tS | i  \text{ occurs in the bottom and  } i+1 \text{ in the top row of } \sigma \text{ resp. }  \forall i \in I \}.
\end{align*}
Since $I = \{1, \dots, n-1\} \setminus \{i_1, \dots, i_m \}$, we get $i \in I$ if and only if $i=i_j-1$ for $i_j$ a double entry.
Thus, $\tS_I = \im \vp$ and the theorem follows by the comments above.
\end{bew}

\begin{defi}
We denote by $\widetilde{Y}_S$ the irreducible component of the Spaltenstein variety $\widetilde{Y}$ corresponding to a standard tableau $S$ of type $(i_1, \dots, i_m)$ by Theorem \ref{1-1}.
\end{defi}

\begin{bem}\label{zurückführung}
From the proofs of Theorem \ref{Spaltenstein} and Theorem \ref{1-1} we particularly get the following:
If $F_\bullet \in \comb{S}$, then all full flags associated with $F_\bullet$ are in $\com{\vp(S)}$, i.e. $\op{pr}^{-1}(F_\bullet) \subset \com{\vp(S)}$. On the other hand, for a full flag $F'_\bullet$ with $F'_\bullet \in \com{\sigma}$ such that $I \subset I_{\sigma}$ we know that the projected partial flag lies in $\comb{\vp^{-1}(\sigma)}$.
\end{bem}

\begin{bem} \label{doppelte} 
For $i_l$ is a double entry in $S$ we have $F_{i_l} = N^{-1} F_{i_{l-1}}$ because of the rank-nullity theorem. 
\end{bem}

We formulate as a first result a generalisation of \cite[Theorem 5.2]{fung}:
\begin{satz}[Explicit description of irreducible components] \label{Komponenten} ~\\
Let $S$ be a standard tableau of type $(i_1,\dots, i_m)$. Then the irreducible component $\comb{S}$ of the Spaltenstein variety $\widetilde{Y}$ of type $(i_1,\dots, i_m)$ consists of all flags, whose subspaces satisfy the following conditions:

\begin{itemize}
	\item for each $l$	\[F_{i_l} \subset N^{-1}(F_{i_{l-1}})
\] 
\item if $i_l$ is on the top row of the tableau $S$ and $i_l-1$ is in the bottom row and $i_l-1$ is not a double entry, 
then
	\[F_{i_l} = N^{-1}(F_{i_l-2})
\]
\item if $i_l$ and $i_l-1$ are both in the top row of $S$, then
\begin{itemize}
	\item if $F_{i_l-1} = N^{-d}(F_r)$ where $r$ is in the bottom row and not a double entry, then
	\[F_{i_l} = N^{-d-1}(F_{r-1})
\]
\item if $F_{i_l-1} = N^{-d}(\im N^{n-k-j})$ where $0 \leq j < n-2k$, then 
	\[F_{i_l} = N^{-d}(\im N^{n-k-j-1}).
\]
\end{itemize}
\end{itemize}
(Here $0$ is thought of being in the top row, $\{0\}=F_0 = \im N^{n-k}$)
\end{satz}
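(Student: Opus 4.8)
The strategy is to transfer Fung's description of the irreducible components of the Springer fibre (\cite[Theorem 5.2]{fung}, quoted above) through the projection map $\op{pr}$ and the injection $\vp$, using Remark \ref{zur�ckf�hrung} as the bridge. First I would recall that by Remark \ref{zur�ckf�hrung} we have $\op{pr}^{-1}(\comb{S}) = \com{\vp(S)}$: a partial flag $F_\bullet$ lies in $\comb{S}$ if and only if every full flag $F'_\bullet \in X(F)$ associated with it lies in $\com{\vp(S)}$. Thus $\comb{S}$ consists precisely of those partial flags $F_\bullet = (F_{i_1} \subset \dots \subset F_{i_m})$ such that the Fung conditions for $\com{\vp(S)}$, when restricted to the subspaces with indices in $\{i_1,\dots,i_m\}$, can be satisfied by \emph{some} choice of the intermediate subspaces (indices in $I$), and hence — since $\com{\vp(S)}$ is an $I$-variety — are automatically satisfied by \emph{all} such choices.

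The bulk of the argument is then a bookkeeping translation of Fung's three bullet points for $\sigma = \vp(S)$ into statements about the indices $i_l$. Recall $\vp$ changes a double entry $i_l$ in the bottom row of $S$ to $i_l - 1$; so the double entries $i_l$ of $S$ are exactly the indices for which $i_l - 1 \in I$, and for these Remark \ref{doppelte} already gives $F_{i_l} = N^{-1} F_{i_{l-1}}$, which is a special case of Fung's first bullet. For the remaining (non-double) indices I would go case by case:
\begin{itemize}
  \item Fung's first condition $F_j \subset N^{-1}(F_{j-1})$ for all $j$ specialises, at $j = i_l$ with $i_l - 1 \in I$, to the condition above combined with $N$-invariance; at $j = i_l$ with $i_l - 1 \notin I$ it is exactly the first bullet of the theorem.
  \item Fung's second condition (top/bottom jump giving $F_j = N^{-1}(F_{j-2})$) must be rephrased in terms of the rows of $S$: if $i_l$ is in the top row of $S$, then $i_l$ is in the top row of $\vp(S)$ too, and $i_l - 1$ is in the bottom row of $\vp(S)$ precisely when either (a) $i_l - 1$ is in the bottom row of $S$ and not a double entry, or (b) $i_l - 1 \notin \{i_1,\dots,i_m\}$ arises from $\vp$ shrinking some double entry $i_{l'} = i_l$; case (b) is again Remark \ref{doppelte}, and case (a) gives the second bullet, with $F_{i_l-2}$ legitimately one of the fixed subspaces of the partial flag.
  \item Fung's third condition (two consecutive top-row entries) transfers similarly: the sub-case where $F_{i_l - 1} = N^{-d}(F_r)$ with $r$ in the bottom row needs $r$ to be a genuine index of the partial flag, which forces $r$ to be non-double (if $r$ were double, $\vp$ would have shifted it and $F_r$ would not be one of the fixed $F_{i_{l'}}$); the $\im N^{n-k-j}$ sub-case passes through unchanged since $\vp$ does not affect it.
\end{itemize}
Throughout I must check that every subspace appearing on the right-hand side of an equation in the statement — $F_{i_l - 2}$, $F_{r-1}$, $\im N^{n-k-j-1}$ — is indeed determined by the fixed data of the partial flag, so that the conditions are genuinely conditions on $\comb{S}$ and not on an auxiliary full flag; this is where Lemma \ref{dim} (the gaps $i_l - i_{l-1} \le 2$) and the structure of $I$ are used.

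**Main obstacle.** The delicate point is the interplay between the two notions of ``double entry'': an index $i_l$ that is a double entry of $S$ versus an index that, after applying $\vp$, produces consecutive integers in $\sigma$. One has to be careful that Fung's conditions are phrased for $\sigma = \vp(S)$ in terms of \emph{which row an integer sits in}, and a single case in $S$ can split into several cases in $\sigma$ (and vice versa) depending on whether neighbouring entries are double. Verifying that the resulting list of conditions on $\comb{S}$ is exactly the one stated — in particular that the hypotheses ``$i_l - 1$ is not a double entry'' and ``$r$ is not a double entry'' correctly carve out the cases already covered by Remark \ref{doppelte}, with no condition missing and none redundant — is the real content; once the dictionary between $S$ and $\vp(S)$ is set up carefully, each individual implication is immediate from Fung's theorem together with Remark \ref{zur�ckf�hrung}.
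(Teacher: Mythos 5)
Your proposal follows exactly the paper's own argument: both directions are obtained by passing between a partial flag in $\comb{S}$ and its associated full flags in $\com{\vp(S)}$ via Remark \ref{zur�ckf�hrung}, then translating Fung's conditions for $\vp(S)$ back to conditions indexed by $(i_1,\dots,i_m)$ with Remark \ref{doppelte} absorbing the double-entry cases. Your case-by-case dictionary between $S$ and $\vp(S)$ is precisely the bookkeeping the paper summarises as ``by comparing the different cases'', so the proposal is correct and essentially identical to the paper's proof.
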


\begin{bew} [Proof of Theorem \ref{Komponenten}] 
Let $F_\bullet \in \comb S$ and let $\widehat{F}_{\bullet} \in X(F_{\bullet})$ be an associated full flag. By Remark \ref{zurückführung} we have $\widehat{F}_{\bullet} \in \com{\vp(S)}$, so $\widehat{F}_{\bullet}$ meets the conditions of \cite[Theorem 5.2]{fung}. By comparing the different cases and taking Remark \ref{doppelte} into account we see that only the above conditions remain.

Conversely, let $F_\bullet \in \widetilde{Y}$ be a partial flag satisfying the conditions above. Let  $\widehat{F}_\bullet$ be a full flag associated with $F_\bullet$ . 
By considering all possible cases, we see that the conditions of \cite[Theorem 5.2]{fung} are satisfied with respect to $\vp(S)$, thus $\widehat{F}_\bullet \in \com{\vp(S)}$, so by Remark \ref{zurückführung} we have $F_\bullet = \op{pr}(\widehat{F}_\bullet) \in \comb{S}$.
\end{bew}

\begin{defi}
If for all flags satisfying the conditions the subset $F_{i_l}$ is specified as $F_{i_l} = N^{-j}(F_{i_s})$ for some $j > 0$ or $F_{i_l} = N^{-j}(\im N^t)$ for some $j \geq 0$, it is called \textit{dependent}. If a subset is not dependent, it is called \textit{independent}.
\end{defi}

\section{Torus fixed points and generalised irreducible components}
\label{sec:fixpkte}

\begin{bem} [Origin of the $\mathbb{C}^*$-action]~\\
Let $T \cong (\C^*)^n$ be the torus of diagonal matrices in the basis given by the $e_i$'s and $f_i$'s. $T$ acts on the partial flag variety $\op{Fl}(i_1, \dots , i_m)$ via its action on the $e_i$'s and $f_i$'s. 

For $t \in T$ acting on the Spaltenstein variety as well, it has to commute with $N$. 
For  $t = \begin{pmatrix}
\lambda_1 & & \\
&\ddots& \\
& & \lambda_n
\end{pmatrix}$ 
we have $N t = tN$ if and only if $\lambda_1 = \dots = \lambda_{n-k}$ and $\lambda_{n-k+1} = \dots = \lambda_n$.
Therefore, the part of $T$ commuting with $N$ is isomorphic to $(\C^*)^2$.

Now we choose the cocharacter 
\begin{align*}
	\C^* &\to (\C^*)^2 \\
	t & \mapsto (t^{-1},t)
\end{align*}
 and get an action of $\C^*$ on $\op{Sp}(i_1, \dots, i_m)$.
\end{bem}

\begin{lemma} \label{fixed points}
For this $\C^*$-action on $\op{Sp}(i_1, \dots, i_m)$ we get a natural bijection
\begin{align*}
\left\{\text{\rm{row strict tableaux of type }} (i_1,\dots, i_m)\right\} &\stackrel[\Psi]{1:1}{\longleftrightarrow} \left\{\text{\rm{fixed points of the action}}\right\}\\
w &\mapsto \fx{w}, 	
\end{align*} 
where $\fx{w}$ the partial flag with $\mathcal{F}_{i_l}(w) = \left\langle\{e_j, f_r | j \leq t_{i_l}, r \leq b_{i_l}\}\right\rangle$, where $t_s$ is the number of indices smaller than or equal to $s$ in the top row and similarly for $b_s$ in the bottom row.
\end{lemma}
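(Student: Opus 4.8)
The plan is to identify the $\C^*$-fixed points of $\op{Sp}(i_1,\dots,i_m)$ directly and then check that the combinatorial bookkeeping matches row strict tableaux. First I would observe that the chosen cocharacter acts on $V$ with weight $-1$ on each $e_i$ and weight $+1$ on each $f_j$, so $V = V_{-1}\oplus V_{+1}$ with $V_{-1}=\langle e_1,\dots,e_{n-k}\rangle$ and $V_{+1}=\langle f_1,\dots,f_k\rangle$. A subspace $F$ of $V$ is $\C^*$-stable if and only if it is graded, i.e. $F=(F\cap V_{-1})\oplus(F\cap V_{+1})$; hence a partial flag is fixed precisely when every $F_{i_l}$ is graded. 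Since $N$ preserves the grading (it maps $e_i\mapsto e_{i-1}$, $f_j\mapsto f_{j-1}$), the only constraint from $N$-invariance on a graded subspace is that $N F_{i_l}\subset F_{i_{l-1}}$, and an $N$-stable graded subspace of $V_{-1}$ is forced to be of the form $\langle e_1,\dots,e_a\rangle$ (an $N$-stable subspace of a single Jordan block is a standard coordinate subspace), similarly $\langle f_1,\dots,f_b\rangle$ in $V_{+1}$. So each fixed flag is determined by two weakly increasing sequences $a_{i_1}\le\dots\le a_{i_m}$ and $b_{i_1}\le\dots\le b_{i_m}$ with $a_{i_l}+b_{i_l}=i_l$, and the $N$-invariance $N F_{i_l}\subset F_{i_{l-1}}$ translates into $a_{i_l}-1\le a_{i_{l-1}}$ and $b_{i_l}-1\le b_{i_{l-1}}$, i.e. the increments of each sequence are at most $1$.

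Next I would set up the bijection with row strict tableaux. Given such a pair of sequences, record in step $l$ (reading $i_1,\dots,i_m$) whether $a$ increased, $b$ increased, or both; placing the label $i_l$ in the top row when $a$ increases and in the bottom row when $b$ increases (so a double entry $i_l$ occurs exactly when both increase, which needs $i_l-i_{l-1}=2$, consistent with Lemma \ref{dim}). Because each of $a,b$ is strictly increasing along the steps where that row receives an entry and the entries are the $i_l$ in increasing order, the rows of the resulting tableau are automatically strictly decreasing when read left to right in the usual (decreasing) convention of this paper; conversely a row strict tableau of type $(i_1,\dots,i_m)$ gives back the two sequences by counting, in each row, how many entries are $\le s$ — which is exactly the definition of $t_s$ and $b_s$. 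Unwinding these definitions shows $\mathcal{F}_{i_l}(w)=\langle\{e_j,f_r\mid j\le t_{i_l},\,r\le b_{i_l}\}\rangle$, and that $\Psi\colon w\mapsto\fx w$ is a well-defined inverse, so the correspondence is a bijection. Finally I would note that $\fx w$ genuinely lies in $\op{Sp}(i_1,\dots,i_m)$: the dimension condition is $t_{i_l}+b_{i_l}=i_l$, which holds since every index $1,\dots,i_l$ appears (once or twice) among the first $l$ columns, and $N$-invariance holds since $N\langle e_1,\dots,e_a\rangle=\langle e_1,\dots,e_{a-1}\rangle$ and $t_{i_l}-1\le t_{i_{l-1}}$, $b_{i_l}-1\le b_{i_{l-1}}$ by row strictness.

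The naturality ($T$-equivariance / compatibility with the reduction map $\op{pr}$ and the tableau maps of the previous sections) is then a formal consequence, but I would spell out that under the correspondence the double entries $i_l$ of $w$ are exactly the places where $F_{i_l}=N^{-1}F_{i_{l-1}}$ is forced, matching Remark \ref{doppelte}. The main obstacle is the bookkeeping in the second paragraph: one must be careful that the two "counting" descriptions — increments of $(a,b)$ versus the tableau entries — are genuinely inverse, and in particular that the strictly-decreasing-rows condition corresponds precisely to "each of the sequences $a,b$ increases by at most $1$ per step and cannot increase in two consecutive equal labels"; getting the edge cases (single vs. double entries, and $i_l-i_{l-1}\in\{1,2\}$) exactly right is where the care is needed, though nothing deep is involved.
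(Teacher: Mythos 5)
Your proposal is correct and follows essentially the same route as the paper: check that $\fx w$ is a torus-fixed $N$-invariant flag, and for surjectivity show that fixedness (gradedness with respect to the weight decomposition $V=\langle e_i\rangle\oplus\langle f_j\rangle$) together with $N$-stability forces each $F_{i_l}$ to be $\langle e_1,\dots,e_a,f_1,\dots,f_b\rangle$, then read off the row strict tableau from the increments of $(a,b)$. Your version merely spells out in more detail the steps the paper leaves implicit (why the fixed subspaces are coordinate subspaces, and why the increment conditions match row strictness), so no substantive difference.
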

\begin{bew} 
The $\C^*$-action is explicitly given by $t.e_i = t^{-1} e_i, t.f_i = t f_i$.

By writing an element of $\mathcal{F}_{i_l}(w)$ in the basis one can directly see that $\fx{w}$ is a fixed point under the action.
By construction we have $N \mathcal{F}_{i_l}(w) \subset \mathcal{F}_{i_{l-1}}(w)$, so $\fx{w} \in \widetilde{Y}$.

$\Psi$ is injective, since $\fx{w} = \fx{w'}$ implies $\mathcal{F}_{i_l}(w) = \mathcal{F}_{i_l}(w')$ and hence $t_{i_l} = t'_{i_l}$ and $b_{i_l} = b'_{i_l}$. Inductively we get $w = w'$. 

The surjectivity follows by inductively showing that because of being a fixed point each $F_{i_l}$ has to be of the form $\left\langle e_1, \dots, e_r, f_1, \dots, f_s \right\rangle$ for some $r, s$ and constructing a associated row strict tableau by putting a number in the top or in the bottom row depending on whether a $e_r$ or a $f_s$ was added to construct $F_{i_l}$ out of $F_{i_{l-1}}$. 
\end{bew}

\begin{defi} \label{weightseq} Let $w$ be a row strict tableau of type $(i_1,\dots, i_m)$. Let $w_{\down}$ be the set of numbers in the bottom row of the tableau, $w_{\up}$ the set of numbers in the top row and $w_\times$ the set of double entries.\\
We consider the sequence ${\bf
a}=a_1 a_2 a_3\ldots a_n$, where $a_{i-1} = a_i = \times$ if $i \in w_\times$ and otherwise $a_i=\wedge$ if $i\in w_\wedge$ and $a_i=\vee$
if $i\in w_\vee$, and call it the {\it weight sequence} of $w$.

Associate with $w$ a cup diagram $C(w)$ as follows: We consider the weight sequence and build the diagram
inductively by adding an arc between any adjacent pair $\vee\wedge$ (ignoring all $\times$'s),
and then continuing the process for the sequence ignoring the already connected pairs. After that we match all the remaining adjacent $\wedge\vee$-pairs and again ignore all $\times$'s and the alreaddy connected points.  

Now, several row strict tableaux of type $(i_1,\dots, i_m)$ have the same cup diagram. Among all the row strict tableaux which have the same cup diagram as $w$ there is one standard tableau. This standard tableau can be constructed by putting every left endpoint of an arc in the cup diagram in the bottom row, every right endpoint or unmatched point in the top row and then inserting the double entries. Call this $S(w)$. 
\end{defi}

\begin{bsp}
	\[w = \young(531,643) \hspace{4,5cm} S(w)=\young(653,431)
\]
\begin{align*}
	&w_{\up} = \{1, 3, 5\}, \quad w_{\down} = \{3, 4, 6\}, \quad w_{\times} = \{3\} = S(w)_{\times}, \\
	&S(w)_{\up} = \{3, 5, 6\}, \quad S(w)_{\down} = \{1, 3, 4\}
\end{align*}
weight sequence of $w$: $\up \times \times \down \up \down$, \quad  weight sequence of $S(w)$: $\down \times \times \down \up \up$

$$C(w) = \begin{tabular}[c]{c}
\begin{tikzpicture}
		\begin{scope}
		\draw (0,0) node[above=-1.55pt] {$1$};
		\draw (0.4,0) node[above=-1.55pt] {$2$};
		\draw (0.8, 0) node[above=-1.55pt] {$3$};
		\draw (1.2,0) node[above=-1.55pt] {$4$};
		\draw (1.6,0) node[above=-1.55pt] {$5$};
		\draw (2,0) node[above=-1.55pt] {$6$};
	
		\draw (0.8,0) node {$\times$};
		\draw (0.4,0) node {$\times$};
		
		\draw (1.2,0) arc (180:360:0.2);
		\draw (0,0) arc (180:360:1.0);
		\end{scope}
		\end{tikzpicture} 
		\end{tabular}
		= C(S(w))$$		
\end{bsp}

\begin{defi}
Let $P =\left\langle e_1, \dots , e_{n-k}\right\rangle$, $Q = \left\langle f_1, \dots, f_k\right\rangle$ and let $\widetilde{Y}$ be a Spaltenstein variety of type $(i_1, \dots, i_m)$.
For each flag $F_{\bullet}$ in $\widetilde Y$, we can obtain a flag (with no longer necessarily distinct spaces) in $P$ by taking the intersections $\tP_i=F_i\cap P$, and similarly in $Q$ by taking $\tQ_i=\alpha(F_i/(F_i\cap P))$ with $\alpha: V/P \stackrel{\cong}{\rightarrow} Q$.  We can define the new flag $F'_{\bullet}$ by putting $F_i':=\tP_i+\tQ_i\subset P\oplus Q= V$. Let $\stb w^0$ be the subvariety of partial flags $F_{\bullet}$ in $\widetilde{Y}$ with the property that $F'_{\bullet} = \fx w$ holds. Let $\stb w = \overline{\stb w^0}$ be its closure. If $(i_1, \dots, i_m) = (1, \dots, n)$ we write $\st w$ instead of $\stb w$. In the following, we call the $\stb w$ \textit{generalised irreducible components}, even though this is not a standard terminology. But as one can see in the next theorem, the set of generalised irreducible components contains the set of irreducible components. 
\end{defi}

\begin{satz}\label{row strict}
 
 Let $w$ be a row strict tableau of type $(i_1,\dots, i_m)$. Then $\stb w$ is
   the subset of $\comb {\St w}$ containing exactly the flags $F_{\bullet}$ which satisfy the additional property: if $i\in \left(w_{\wedge}\cap \St w_{\vee}\right) \smallsetminus w_{\times}$, then $F_i = \mathcal{F}_i(w)$. 

In particular, for any standard tableau $S$, we have $\stb S=\comb S$.
\end{satz}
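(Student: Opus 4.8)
\emph{Proof sketch.}
The final assertion is the case $w=S$ of the first: for a standard tableau $S$ one has $\St S=S$ (its cup diagram already has all arc left endpoints in the bottom row), and $S_\wedge$, $S_\vee$, $S_\times$ partition $\{1,\dots,n\}$, so $S_\wedge\cap\St S_\vee=\emptyset$ and the extra condition is vacuous. So it suffices to prove the displayed description of $\stb w$, and the plan is to reduce it to the Springer fibre --- where the analogue is \cite[Theorem 15]{stroppel} --- exactly in the spirit of the deduction of Theorem \ref{Komponenten} from \cite[Theorem 5.2]{fung}.

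Given the row strict tableau $w$ of type $(i_1,\dots,i_m)$, let $\widehat w$ be the row strict tableau of type $(1,\dots,n)$ obtained, as in Definition \ref{phi}, by lowering the bottom copy of each double entry by one. One checks the compatibilities: $\St{\widehat w}=\vp(\St w)$; $\op{pr}$ sends the fixed point $\mathcal F_\bullet(\widehat w)$ to $\mathcal F_\bullet(w)$; the $\C^*$-action on $Y$ is $\op{pr}$-equivariant over the one on $\widetilde Y$; and the polarisation $F_\bullet\mapsto F'_\bullet$ commutes with $\op{pr}$, since $(\op{pr}\,\widehat F_\bullet)_{i_l}\cap P=\widehat F_{i_l}\cap P$. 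Hence $\op{pr}$ maps the cell $\st{\widehat w}^0=\{\widehat F_\bullet:(\widehat F_\bullet)'=\mathcal F_\bullet(\widehat w)\}$ into $\stb w^0$, so $\op{pr}(\st{\widehat w})=\op{pr}(\overline{\st{\widehat w}^0})\subseteq\overline{\stb w^0}=\stb w$.

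Next I would identify $\op{pr}(\st{\widehat w})$ with the set $Z$ of flags of $\comb{\St w}$ satisfying the extra conditions. By \cite[Theorem 15]{stroppel}, $\st{\widehat w}$ consists of the flags of $\com{\vp(\St w)}$ with $\widehat F_i=\mathcal F_i(\widehat w)$ for $i\in\widehat w_\wedge\cap\St{\widehat w}_\vee$, and a short combinatorial check --- using that $\widehat w$'s cup diagram is $w$'s with one extra small arc over each double-entry pair --- shows that these indices all lie in $\{i_1,\dots,i_m\}$ and are exactly $(w_\wedge\cap\St w_\vee)\smallsetminus w_\times$. Since $\op{pr}(\com{\vp(\St w)})=\comb{\St w}$ and, as in the proof of Theorem \ref{Komponenten}, a partial flag lies in $\comb{\St w}$ if and only if all its full-flag completions lie in $\com{\vp(\St w)}$, one gets $\op{pr}(\st{\widehat w})=Z$. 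In particular $Z=\op{pr}(\st{\widehat w})\subseteq\stb w$, which is the inclusion ``$\supseteq$'' of the theorem.

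It then remains to prove $\stb w\subseteq Z$, equivalently (as $Z$ is closed) $\stb w^0\subseteq Z=\op{pr}(\st{\widehat w})$: every partial flag $F_\bullet$ with $F'_\bullet=\mathcal F_\bullet(w)$ admits an $N$-invariant full-flag completion lying in $\st{\widehat w}$, e.g.\ one with polarisation equal to $\mathcal F_\bullet(\widehat w)$. This is where the real work lies: one must construct the missing subspaces $\widehat F_i$ at the indices $i\in I$ --- which occur only where $i=i_l-1$ with $i_l$ a double entry, so $i_l-i_{l-1}=2$ --- make the completed flag $N$-invariant, and arrange the prescribed polarisation; this is a hands-on analysis of $N$-invariance at those indices, using that $F'_{i_{l-1}}=\mathcal F_{i_{l-1}}(w)$ and $F'_{i_l}=\mathcal F_{i_l}(w)$ already force $F_{i_{l-1}}\cap P$, $F_{i_l}\cap P$ and the images of $F_{i_{l-1}},F_{i_l}$ in $Q$. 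I expect this lifting step to be the main obstacle, the more so because the iterated fibre bundle structure of $\comb{\St w}$ (Section \ref{sec:iteratedFib}), which would make the bookkeeping automatic, is only established later.
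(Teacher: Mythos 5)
Your overall strategy --- reducing to the Springer fibre via $\vp$ and $\op{pr}$ and invoking \cite[Theorem 15]{stroppel} --- is the same as the paper's, and your treatment of the inclusion $Z=\op{pr}(\st{\vp(w)})\subseteq\stb w$, together with the identification $\vp(w)_\wedge\cap \St{\vp(w)}_\vee=\left(w_\wedge\cap \St{w}_\vee\right)\smallsetminus w_\times$, is correct. (A small slip: for a standard tableau $S$ the sets $S_\wedge,S_\vee,S_\times$ do not partition $\{1,\dots,n\}$, since double entries lie in both rows; rather $S_\wedge\cap S_\vee=S_\times$, and the extra condition is vacuous only because the statement removes $S_\times$.)

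The step you leave open, $\stb w^0\subseteq Z$, is a genuine gap as written, but it is considerably easier than you fear, and the paper closes it with two observations you have not exploited. First reduce, as the paper does, to forgetting a single index $i$ at a time, so that $i+1$ is a double entry of $w$. Then: (i) $N$-invariance of \emph{any} completion $\widehat{F}_i$ with $F_{i-1}\subset \widehat{F}_i\subset F_{i+1}$ is automatic (Remark \ref{N-inv}), since $N\widehat{F}_i\subset NF_{i+1}\subset F_{i-1}$ and $NF_{i+1}\subset F_{i-1}\subset \widehat{F}_i$; nothing has to be ``arranged'' here. (ii) Because $i+1$ is a double entry and $F_\bullet\in\stb w^0$, the quotient $\mathcal F_{i+1}(w)/\mathcal F_{i-1}(w)$ is spanned by one $e$-vector and one $f$-vector, so $\widehat{F}_i\cap P$ must equal $F_{i-1}\cap P$ or $F_{i+1}\cap P$ (they differ by dimension one), likewise for the $Q$-part, and a dimension count forces exactly one of the two to jump; hence the polarised space $\widehat{F}'_i$ can only be $\mathcal F_i(\vp(w))$ or $\mathcal F_i(v)$ for the other lift $v$ of $w$. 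Choosing $\widehat{F}_i=F_{i-1}+\langle u\rangle$ with $u\in F_{i+1}$ mapping to the appropriate basis vector therefore yields a completion lying in $\st{\vp(w)}^0$, to which \cite[Theorem 15]{stroppel} applies; the resulting conditions descend to $F_\bullet$ via Remark \ref{zur�ckf�hrung}. With (i) and (ii) your lifting step is complete and your argument coincides with the paper's; in particular no appeal to the fibre bundle structure of Section \ref{sec:iteratedFib} is needed.
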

\begin{bew}
First we confirm that these relations hold on $\stb w^0$ (and thus on $\stb w$, since they are closed conditions).% 

Consider first the case where $(i_1,\dots, i_m) = (1,\dots, i-1, i+1, \dots, n)$ for some $i$:\\
We use the map $\vp$ from Definition \ref{phi} for row strict tableaux as well. This is possible since in row strict tableaux the double entries also appear in different rows.

Let $F_\bullet \in \stb{w}^0$. Since $(F_1 \subset \dots \subset F_{i-1} \subset F_{i+1} \subset \dots \subset F_n)$ is $N$-invariant, $(F_1 \subset \dots \subset F_{i-1}\subset F_i \subset F_{i+1} \subset \dots \subset F_n)$ is also $N$-invariant for each possible $F_i$.

Hence, by \cite[Theorem 15]{stroppel} we have $(F_1 \subset \dots \subset F_n) \in \com{S(\vp(w))}$ and $F_j = \mathcal{F}_j(\vp(w))$ for all $j \in \vp(w)_{\wedge} \cap \St{\vp(w)}_{\vee} = (w_{\wedge} \cap \St{w}_{\down}) \setminus \{i+1\}$. So by Remark \ref{zurückführung} and $\comb{\vp^{-1}(S(\vp(w)))} = \comb{S(w)}$ we get the relations.

On the other hand, let $F_\bullet \in \comb{S(w)}$ with $F_j = \mathcal{F}_j(w)$ for $j\in \left(w_{\wedge}\cap \St w_{\vee}\right) \smallsetminus \{i+1\}$. 
For $\widehat{F}_\bullet$ a full flag associated with $F_\bullet$ by Remark \ref{zurückführung} and $\St{\vp(w)} = \vp(\St{w})$ we get $\widehat{F}_\bullet \in \com{S(\vp(w))}$. Since $\vp(w)_{\wedge} \cap \St{\vp(w)}_{\vee} =  (w_{\wedge} \cap \St{w}_{\down}) \setminus \{i+1\}$, $\widehat{F}_\bullet$ satisfies the conditions of \cite[Theorem 15]{stroppel}, and $\widehat{F}_\bullet \in \st{\vp(w)}$ follows. In particular, we have $F_\bullet = \op{pr}(\widehat{F}_\bullet) \in \stb{w}$.

For general $(i_1,\dots, i_m)$ the reasoning is analogous, since the proof at most uses properties of $F_{i-1}$ and $F_{i+1}$.
\end{bew}

\begin{defi} \label{p-Abb}
Let $w$ be a row strict tableau of type $(i_1, \dots, i_m)$. Let $I$ be the set $\{1, \dots, n\} \smallsetminus \{i_1, \dots, i_m\}=: \{j_1, \dots, j_r\}$.
We associate with $w$ a row strict tableau of type $(1, \dots, n-2r)$ as follows: 
We delete the boxes with double entries, i.e. those with $j_l+1$, $l=1, \dots, r$. Then we replace the entries $a \in \{j_l+2, \dots, j_{l+1}-1\}$ by $a-2l$ for $l = 1, \dots, r$. The result is still a row strict tableau, which contains the entries $1, \dots, n-2r$ only once.

Thus, we get a map 
\begin{align*}	p: &\left\{  \text{row strict tableaux of type } (i_1, \dots ,i_m) \text{ of shape } (n-k, k) \right\} \\ &\to \left\{ \text{row strict tableaux of type } (1, \dots, n-2r) \text{ of shape } (n-k-r, k-r)\right\}
\end{align*}

For example
	\[p\left( \young(6543,731)\right)= \young(432,51).
\]

We define $\pi: \stb{w} \to \st{p(w)}$ by
\begin{align*}
	(F_1 \subset \dots &\subset F_{j_1-1} \subset F_{j_1+1} \subset \dots \subset F_{j_2-1} \subset \dots \\ 
	&\dots \subset F_{j_l+1} \subset \dots \subset F_{j_{l+1}-1} \subset \dots \subset F_n) \\
	\mapsto (F_1 \subset \dots &\subset F_{j_1-1} \subset N F_{j_1+2} \subset \dots \subset N F_{j_2-1} \subset \dots \\ &\dots \subset N^{l} F_{j_l+2} \subset \dots \subset N^{l} F_{j_{l+1}-1} \subset \dots \subset N^{r} F_n)
\end{align*}
\end{defi}

\begin{satz} \label{Abb nach Springer}
The map $\pi$ from above is an isomorphism of varieties.
\end{satz}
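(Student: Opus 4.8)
The plan is to show directly that $\pi$ is a bijective morphism between smooth varieties (or, more carefully, that it has an inverse morphism), so that it is an isomorphism. First I would check that $\pi$ is well-defined as a map into $\st{p(w)}$. For $F_\bullet \in \stb w$, the subspaces $N^l F_{j_l+2} \subset \cdots \subset N^l F_{j_{l+1}-1}$ obtained by applying $N^l$ have the correct dimensions: by Remark \ref{doppelte} each double-entry step is exactly an $N^{-1}$-step, so applying $N$ across a double entry $j_l+1$ drops the dimension by $2$ and matches the deletion of the two boxes with entry $j_l+1$ in the definition of $p$; between consecutive double entries $N^l$ is injective on the relevant quotients, so dimensions are preserved up to the shift by $2l$. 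One must also check the result is $N$-invariant (immediate, since $N$ commutes with itself and with the inclusions) and that it is the required torus fixed point data, i.e.\ that $\pi(\stb w^0) \subset \st{p(w)}^0$; this follows because the operations $\tP_i = F_i \cap P$, $\tQ_i = \alpha(F_i/(F_i\cap P))$ used to define $\stb w^0$ commute with applying $N$, and $N$ sends the fixed flag $\mathcal{F}_\bullet(w)$ to (a truncation of) $\mathcal{F}_\bullet(p(w))$ by the explicit basis description in Lemma \ref{fixed points}. Taking closures then gives $\pi(\stb w) \subset \st{p(w)}$.

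Next I would construct the inverse explicitly. Given a flag $G_\bullet = (G_1 \subset \cdots \subset G_{n-2r}) \in \st{p(w)}$, I want to reconstruct $F_\bullet \in \stb w$. The idea is that $\pi$ only "forgets'' the double-entry subspaces $F_{j_l+1}$ (which by Remark \ref{doppelte} are forced to be $N^{-1}F_{j_l}$, hence carry no information) and rescales the rest by powers of $N$; so the candidate inverse sets $F_{j_1-1+s} $, etc., by applying the appropriate $N^{-l}$ to the corresponding $G$-subspace and inserting $F_{j_l+1} := N^{-1} F_{j_l}$ for the double entries. Here one uses the explicit description of $\comb{S(w)}$ from Theorem \ref{Komponenten} together with Theorem \ref{row strict} to see that on $\stb w$ the relevant subspaces automatically lie in the image of $N^l$ (the relations $F_{i_l} = N^{-d}(\cdots)$ and $F_{i_l}\subset N^{-1}(F_{i_{l-1}})$ guarantee preimages exist and are unique with the prescribed dimension), so that $N^{-l}$ applied to $N^l F_{i_\bullet}$ recovers $F_{i_\bullet}$. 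I would then verify that this reconstruction lands in $\stb w$ — again by checking the conditions of Theorem \ref{row strict} and Theorem \ref{Komponenten} translate between $p(w)$ and $w$ — and that the two constructions are mutually inverse, which is a bookkeeping computation with the index shifts in Definition \ref{p-Abb}.

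Finally I would argue that both $\pi$ and its inverse are morphisms of varieties: $\pi$ is visibly a morphism since it is built from $N$ (a fixed linear map) and the forgetful projections, which are morphisms of flag varieties; the inverse is a morphism because taking $N$-preimages of a fixed dimension is a morphism on the locus where such preimages have constant dimension, which is exactly $\stb w$ by the dimension count above. A bijective morphism of varieties need not be an isomorphism in general, so to conclude cleanly I would either (a) note that both source and target are iterated $\CP^1$-bundles hence smooth (forward-referencing Section \ref{sec:iteratedFib}, if permissible) and invoke that a bijective morphism between smooth varieties that admits a morphism inverse is an isomorphism, or better (b) simply observe that we have exhibited an explicit two-sided inverse morphism, which settles it without any smoothness input.

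\textbf{Main obstacle.} The substantive point is the second step: showing that the "obvious'' inverse actually takes values in $\stb w$ and not merely in $\comb{S(w)}$ or some larger flag variety — i.e.\ that after deleting double entries and dividing by powers of $N$ one can genuinely invert the process. This rests on the fact, extracted from Theorem \ref{Komponenten} and Theorem \ref{row strict}, that every subspace $F_{i_l}$ occurring in $\stb w$ either equals $\mathcal{F}_{i_l}(w)$ or is of the form $N^{-d}(\text{something lower})$ with $d$ large enough that applying $N^l$ and then $N^{-l}$ is the identity on it; keeping track of these exponents across the blocks $\{j_l+2, \dots, j_{l+1}-1\}$ and matching them to the shift $a \mapsto a-2l$ is where the care is needed.
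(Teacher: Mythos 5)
Your proposal is correct and follows essentially the same route as the paper: check well-definedness of $\pi$ via the explicit descriptions (Theorem \ref{Komponenten} and Theorem \ref{row strict}), exhibit the explicit inverse built from $N^{-1}$ (reinserting the forced double-entry subspaces $N^{-1}F_{j_l}$), and verify both directions are morphisms via the Grassmannian embeddings. The only organisational difference is that the paper first reduces to forgetting a single index $i$ and composes, which streamlines the bookkeeping with the shifts $a \mapsto a-2l$ that you flag as the delicate point.
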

\begin{bew}
Since $p$ as well as $\pi$ are compositions of maps which only forget one index, it is enough to consider $(1, \dots, i-1, i+1, \dots, n)$ with $I=\{i\}$. In this case, the maps $p$ and $\pi$ are of the following form:
\begin{align*}	p: &\left\{  \text{row strict tableaux of type } (1, \dots i-1, i+1, \dots n) \right\} \\ &\to \left\{ \text{row strict tableaux of type } (1, \dots n-2) \text{ of shape } (n-k-1, k-1)\right\}
\end{align*}
is the map which sends a tableau to another one by deleting the boxes with $i+1$ in it and replacing the numbers $i+2, \dots, n$ by $i, \dots, n-2$; and 
\begin{align*}
	\pi: (F_1 \subset \dots \subset F_{i-1} \subset F_{i+1} \subset \dots \subset F_n) \mapsto (F_1 \subset \dots \subset F_{i-1} \subset N F_{i+2} \subset \dots \subset N F_n).
\end{align*}

Since $\dim \ker N|_{F_j} = 2$ for $j \geq i+1$ we get that $(F_1 \subset \dots \subset F_{i-1} \subset N F_{i+2} \subset \dots \subset N F_n)$ is a full flag. The subset relations are clear or follow from Remark \ref{doppelte}.
In addition, for a $N$-invariant flag $(F_1 \subset \dots \subset F_{i-1} \subset F_{i+1} \subset \dots \subset F_n)$ we have that $(F_1 \subset\dots \subset F_{i-1} \subset N F_{i+2} \subset \dots \subset N F_n)$ is $N'$-invariant where $N' = N |_{N V}$.

Let $(F_1 \subset \dots \subset F_{i-1} \subset F_{i+1} \subset \dots \subset F_n) \in \comb{S}$ for $S$ a standard tableau. By considering the different possible cases we deduce from Theorem \ref{Komponenten} that $(F_1 \subset \dots \subset F_{i-1} \subset N F_{i+2} \subset \dots \subset N F_n)$ satisfies the conditions of \cite[Theorem 5.2]{fung}, so $(F_1 \subset \dots \subset F_{i-1} \subset N F_{i+2} \subset \dots \subset N F_n) \in \com{p(S)}$.
Since this holds for every standard tableau, it holds in particular for $S(w)$.

Let $(F_1 \subset \dots \subset F_{i-1} \subset F_{i+1} \subset \dots \subset F_n) \in \stb{w}$ for $w$ a row strict tableau.
From Theorem \ref{row strict} we get $F'_{j-2} = N F_j = N \mathcal{F}_j(w) = \mathcal{F}_{j-2}(p(w))$ for $j \in \left(w_{\wedge}\cap \St w_{\vee}\right) \smallsetminus w_{\times}$. Since $p(w)_{\wedge} \cap S\big(p(w)\big)_{\vee} = p\big(\left(w_{\wedge}\cap \St w_{\vee}\right) \smallsetminus w_{\times}\big)$ the conditions of \cite[Theorem 15]{stroppel} are satisfied and $(F_1 \subset \dots \subset F_{i-1} \subset N F_{i+2} \subset \dots \subset N F_n) \in \st{p(w)}$ follows.

Now we consider the map $\pi': \st{p(w)} \to \stb{w}$ given by
	\[\left(F'_1 \subset \dots \subset F'_{n-2}\right) \mapsto \left(F'_1 \subset \dots \subset F'_{i-1} \subset N^{-1} F'_{i-1} \subset \dots \subset N^{-1} F'_{n-2}\right).
\] 
Analogously to the above calculation one can compute that it is well-defined, and it is an inverse to $\pi$.

By considering $\widetilde{Y}$ as subset of $Gr(1, \C^n) \times \dots \times Gr(i-1, \C^n) \times Gr(i+1, \C^n) \times \dots \times Gr(n,\C^n)$ one can show that $\pi$ is a morphism of varieties. Similarly, $\pi'$ is a morphism of varieties.
\end{bew}

%%%%%%%%%%%%%%%%%%%%%%%%%%%%%%%%%%%%%%%%%%%%%%%%%%%%%%%%%%%%%%%%%%%%%%%%%%%%%%%%%%%%%%%%%%%%%%%%%%%%%%%%%%%%%%%%%
\section{Generalised irreducible components via dependence graphs}
\label{sec:Abhängigkeitsgraphen}

In this section dependence graphs are used to visualise the description of the irreducible components and generalised irreducible components from the last sections. The proofs consist of combinatorial arguments. For more details see \cite{me}.

\subsection{Dependence graphs describing irreducible components}
\label{sec:DependenceGraphsForStandardTableaux}

\begin{defi}
Let $S$ be a standard tableau of type $(i_1,\dots, i_m)$. The \textit{extended cup diagram for} $S$, $eC(S)$, is defined as follows: We expand the weight sequence from Definition \ref{weightseq} by adding $n-2k$ $\down$'s on the left, i.e. ${\bf
a}=\underbrace{\down \dots \down}_{n-2k}a_1 a_2 a_3\ldots a_n$. Then we connect the $\down \up$-pairs as before. If a cup is starting at one of the newly added $\down$'s, we colour it green. (Green lines are represented by dashed lines for the black and white version.)
\end{defi}

\begin{bsp}$n=7, k = 3, n-2k= 1$, \quad
$S = \young(7543,631)$, \quad $\bf{a} = \down \down \times \times \up \up \down \up$ \\
$eC(S) = $\begin{tabular}[c]{c}		\begin{tikzpicture}
		\begin{scope}
		\draw (0,0) node[above=-1.55pt] {$0$};
		\draw (0.4,0) node[above=-1.55pt] {$1$};
		\draw (0.8, 0) node[above=-1.55pt] {$2$};
		\draw (1.2,0) node[above=-1.55pt] {$3$};
		\draw (1.6,0) node[above=-1.55pt] {$4$};
		\draw (2,0) node[above=-1.55pt] {$5$};
		\draw (2.4,0) node[above=-1.55pt] {$6$};
		\draw (2.8,0) node[above=-1.55pt] {$7$};
		\draw (0.8,0) node {$\times$};
		\draw (1.2,0) node {$\times$};
		\draw (2.4,0) arc (180:360:0.2);
		\draw (0.4,0) arc (180:360:0.6);
		\draw[dashed,thick,green!50!black] (0,0) arc (180:360:1.0);
		\end{scope}
		\end{tikzpicture}  \end{tabular}
\end{bsp}

\begin{bsp} \label{gewünschtes}  If $n=2k$, the extended cup diagrams coincide with the cup diagrams, for example 
$n=4, k=2$. Then we have the following two standard tableaux of type $(1,2,3,4)$:\\
$S_1 = \young(43,21)$ and $S_2 = \young(42,31)$. \\
\begin{align*}
	eC(S_1) = \begin{tabular}[c]{c}
		\begin{tikzpicture}
			\begin{scope}
			\draw (0.4,0) node[above=-1.55pt] {$1$};
			\draw (0.8, 0) node[above=-1.55pt] {$2$};
			\draw (1.2,0) node[above=-1.55pt] {$3$};
			\draw (1.6,0) node[above=-1.55pt] {$4$};
			\draw (0.8,0) arc (180:360:0.2);
			\draw (0.4,0) arc (180:360:0.6);
			\end{scope}
			\end{tikzpicture}	\end{tabular} \hspace{2cm}
	eC(S_2) = \begin{tabular}[c]{c}  \begin{tikzpicture}
			\begin{scope}
			\draw (0.4,0) node[above=-1.55pt] {$1$};
			\draw (0.8, 0) node[above=-1.55pt] {$2$};
			\draw (1.2,0) node[above=-1.55pt] {$3$};
			\draw (1.6,0) node[above=-1.55pt] {$4$};
			\draw (0.4,0) arc (180:360:0.2);
			\draw (1.2,0) arc (180:360:0.2);
			\end{scope}
			\end{tikzpicture} \end{tabular}
\end{align*}

\end{bsp}

\begin{defi} \label{defiabh}
Let $S$ be a standard tableau of type $(i_1,\dots, i_m)$.
The \textit{dependence graph} for $S$, $\depG{S}$, is defined as follows: 

We have $m+(n-2k)+1$ given nodes, numbered $-(n-2k)$ to $0$ and $i_1$ to $i_m$. We label the nodes with $F_j$ for $j\in \{i_1,\dots, i_m\}$ and with $\{0\}$ for the node $0$; the remaining nodes are left unlabelled.

If $i_s$ is labelled with $\times$ in the extended cup diagram, then in the dependence graph we connect $i_s -2$ and $i_s$ and label the resulting arc with $N^{-1}$.

Now, if $i<j$ are connected in the extended cup diagram, then in the dependence graph we connect the nodes $i-1$ and $j$ by an arc of the same colour.  
We label the black arcs with $N^{-l}$ for $l = \frac12(j-(i-1))$ and the green ones with $e_l$.
\end{defi}

\begin{bem}
Note that $l$ always is an integer. We constructed the extended cup diagram by connecting adjacent nodes after an even number in between them is deleted. Thus $i$ and $j$ have different parity and $i-1$ and $j$ have the same parity.
\end{bem}

\begin{bsp} \label{bspS}
$S = \young(7543,631)$ \quad \quad
$eC(S) = $	\begin{tabular}[c]{c}	\begin{tikzpicture}
		\begin{scope}
		\draw (0,0) node[above=-1.55pt] {$0$};
		\draw (0.4,0) node[above=-1.55pt] {$1$};
		\draw (0.8, 0) node[above=-1.55pt] {$2$};
		\draw (1.2,0) node[above=-1.55pt] {$3$};
		\draw (1.6,0) node[above=-1.55pt] {$4$};
		\draw (2,0) node[above=-1.55pt] {$5$};
		\draw (2.4,0) node[above=-1.55pt] {$6$};
		\draw (2.8,0) node[above=-1.55pt] {$7$};
		\draw (0.8,0) node {$\times$};
		\draw (1.2,0) node {$\times$};
		\draw (2.4,0) arc (180:360:0.2);
		\draw (0.4,0) arc (180:360:0.6);
		\draw[dashed,thick,green!50!black] (0,0) arc (180:360:1.0);
		\end{scope}
		\end{tikzpicture} \end{tabular}\\
$\depG{S} = $\begin{tabular}[c]{c}
\begin{tikzpicture}
		\begin{scope}
	
		\draw (0,0) node [above=-1.55pt] {};
		\draw (1,0) node[above=-1.55pt] {$\{0\}$};
		\draw (2, 0) node[above=-1.55pt] {$F_1$};
		\draw (3,0) node[above=-1.55pt] {$F_3$};
		\draw (4,0) node[above=-1.55pt] {$F_4$};
		\draw (5,0) node[above=-1.55pt] {$F_5$};
		\draw (6,0) node[above=-1.55pt] {$F_6$};
		\draw (7,0) node[above=-1.55pt] {$F_7$};
		\draw (2,0) arc (180:360:0.5);
	  \draw (5,0) arc (180:360:1.0);
		\draw (1,0) arc (180:360:1.5);
		\draw[dashed,thick,green!50!black] (0,0) arc (180:360:2.5);
		\draw (2.5,-0.7) node {$N^{-1}$};
		\draw (2.5,-1.7) node {$N^{-2}$};
		\draw (2.5,-2.7) node {$e_3$};
			\draw (6,-1.2) node {$N^{-1}$};
		\end{scope}
		\end{tikzpicture}
		\end{tabular}
\end{bsp}

\begin{defi} \label{bogenbez}
Let $B$ be an arc in $\depG{S}$. Then we denote by $s(B)$ the number of the left endpoint of the arc and by $t(B)$ the number of the right endpoint. (Here, by number we mean the number of the node as defined in Definition \ref{defiabh} and not its position.) We define the width $b(B)$ via $b(B) = \frac12(t(B)-s(B))$.

An arc $B'$ \textit{is nested inside} $B$ if we have $s(B) \leq s(B') <t(B') \leq t(B)$. Note that $B$ is nested inside $B$. 
An \textit{arc sequence} from $a$ to $b$ is given by arcs $B_1, \dots, B_r$ with $s(B_1) = a, t(B_r) = b$ and $t(B_i) = s(B_{i+1})$ for $i = 1, \dots, r-1$. 
For $G$ a green arc let $g(G)$ be the number of green arcs nested inside $G$. 
\end{defi}

\begin{bem} \label{check}
In this notation, by the definition of the dependence graph the labelling of an arc $B$ in $\depG{S}$ is given by $N^{-b(B)}$ or $e_{b(B)}$, respectively.
\end{bem}

\begin{prop} \label{bogen}
\begin{enumerate}[a)]
\item \label{schwarzer} Let $B$ be a black arc in the dependence graph with width $b(B)>1$. Then there is a black arc sequence from $s(B)+1$ to $t(B)-1$.
\item \label{grüner} Let $B$ be a green arc in the dependence graph with $b(B)>1$. If there is no green arc nested inside $B$, then there is a black arc sequence from $0$ to $t(B)-1$. If there is a green arc nested inside $B$, then there is a black arc sequence from the rightmost endpoint of the green arcs nested inside $B$ to $t(B)-1$. 
\end{enumerate}
\end{prop}
\begin{bew}
\begin{enumerate}[a)] 
\item By the construction of $B$ from the extended cup diagram we have $t(B) \in S_{\up}$. Since $b(B) > 1$ we get $t(B)-1 \in S_{\up}$. Since by construction the arcs do not intersect, there has to be a black arc $B_1$ nested inside $B$ with $t(B)-1= t(B_1)$ and $s(B_1) > s(B)$. If $B_1$ is not the desired arc sequence, we consider $s(B_1)$. If $s(B_1) \in S_{\down}$ and not a double entry, then there would be an arc $A$ nested between $B$ and $B_1$ which is a contradiction. Thus we have $s(B_1) \in S_{\up}$ and as above there is a black arc $B_2$ with $t(B_2) = s(B_1)$ and $s(B_2) > s(B)$. We repeat this argument until $s(B_n) = s(B)+1$. 
\item We use the same argument as in a), but we have to stop with the repetition if $s(B_n)= 0$ or $s(B_n) = t(G)$ for $G$ a green arc. Note that arcs $A$ are green if and only if $s(A) < 0$.
\end{enumerate}
\end{bew}

\begin{defi}
Let $S$ be a standard tableau of type $(i_1, \dots, i_m)$. A flag $(F_{i_1} \subset \dots \subset F_{i_m})$ \textit{satisfies the conditions of} $\depG{S}$ if we have
\begin{enumerate}
	\item if the node labelled $F_i$ ($i>0$) is connected to a node labelled $F_j$ with $i<j$ via a black arc labelled $N^{-l}$, then $F_j = N^{-l} F_i$
	\item if the node labelled $F_i$ is the endpoint of a green arc labelled $e_l$, then $F_i = F_{i-1} + \left\langle e_l\right\rangle$
\end{enumerate}
\end{defi}

\begin{satz}[Graphical description of irreducible components] \label{abh(S)}~\\
Let $S$ be a standard tableau of type $(i_1, \dots, i_m)$.
Then the irreducible components $\comb{S}$ consist of all $N$-invariant flags satisfying the conditions of the dependence graph for $S$. 

The space $F_j$, $j>0$, is independent in $\comb{S}$ if and only if the node labelled $F_j$ is the node at the left end of a black connected component of the dependence graph for $S$, where a node without arcs is also a component.
\end{satz}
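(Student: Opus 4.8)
The plan is to reduce the statement to the already-established \emph{explicit} description of $\comb S$ in Theorem \ref{Komponenten} and to show, by a careful case analysis, that the conditions listed there are exactly a repackaging of the two conditions defining ``satisfies the conditions of $\depG S$''. The dependence graph was \emph{constructed} from the extended cup diagram $eC(S)$, which in turn encodes the weight sequence of $S$, so the arcs of $\depG S$ are in explicit bijection with (i) the double entries $i_s\in S_\times$ (giving the $N^{-1}$-labelled arcs from $i_s-2$ to $i_s$), and (ii) the cups of $eC(S)$, black or green. I would therefore go through the three bullet points of Theorem \ref{Komponenten} one at a time. The first bullet, $F_{i_l}\subset N^{-1}F_{i_{l-1}}$, holds for \emph{every} $N$-invariant flag and so does not need to be captured by the graph. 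The double-entry bullet of Remark \ref{doppelte}, $F_{i_l}=N^{-1}F_{i_{l-1}}$ when $i_l\in S_\times$, is precisely the $N^{-1}$-arc from $i_l-2$ to $i_l$ in $\depG S$ (note $i_{l-1}=i_l-2$ in that case). The remaining bullets of Theorem \ref{Komponenten} describe, for $i_l$ and $i_l-1$ both in the top row, how $F_{i_l}$ is obtained from $F_{i_l-1}$; these are exactly the instructions produced when one follows a cup of $eC(S)$ down to its black/green arc in the dependence graph.

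The heart of the argument is then to unwind the recursive ``$F_{i_l-1}=N^{-d}(F_r)$'' and ``$F_{i_l-1}=N^{-d}(\im N^{n-k-j})$'' clauses of Theorem \ref{Komponenten} and match them with the arc structure. Here Proposition \ref{bogen} does the real work: given a black arc $B$ from $s(B)$ to $t(B)$ with $b(B)>1$, the existence of a black arc sequence from $s(B)+1$ to $t(B)-1$ means that $F_{t(B)-1}$ is already expressed as $N^{-(b(B)-1)}$ applied to $F_{s(B)+1}$ via the nested arcs (the telescoping product of the intermediate $N^{-l}$ labels), and then the single arc $B$ adds one more $N^{-1}$ to reach $F_{t(B)}=N^{-b(B)}F_{s(B)}$ — which is exactly the ``$F_{i_l}=N^{-d-1}(F_{r-1})$'' clause with $d=b(B)-1$, $r=s(B)+1$, $r-1=s(B)$. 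The green case is handled identically using part b) of Proposition \ref{bogen}: an innermost green arc $G$ from one of the added $\down$'s, say position $-(j)$ in the weight sequence so that $b(G)=n-k-j$, yields $F_{t(G)}=\im N^{n-k-j-1}$ via $\{0\}=\im N^{n-k}$ at the node $0$ and the $e_l$-labelling $F_{i}=F_{i-1}+\langle e_l\rangle$; a non-innermost green arc reduces, via its nested green arcs and an intervening black arc sequence, to the ``$F_{i_l-1}=N^{-d}(\im N^{n-k-j})$'' clause. I would organise this as: (1) every condition of $\depG S$ follows from Theorem \ref{Komponenten} (show each arc-condition is implied); (2) conversely the conditions of $\depG S$ imply all conditions of Theorem \ref{Komponenten} (show each bullet is recovered, using Proposition \ref{bogen} to reconstruct the nested expressions). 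Since both descriptions cut out the same subset of $N$-invariant flags, they agree.

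For the independence statement, I would argue as follows. A node $F_j$ with $j>0$ is by definition \emph{dependent} exactly when the conditions force $F_j=N^{-a}F_{i_s}$ for some $a>0$ or $F_j=N^{-a}(\im N^t)$. In the dependence graph, $F_j$ being the right endpoint of a black arc $B$ gives $F_j=N^{-b(B)}F_{s(B)}$ with $b(B)\ge1$, and if $F_j$ is not a right endpoint but lies inside a black connected component that is not just $\{F_j\}$, then by definition of connected component $F_j$ is a left endpoint of some arc(s) but is reached from the left end of the component by a black arc sequence, again making it $N^{-a}(\cdot)$ of the left-end node — and that left-end node is itself either $\{0\}$ or $\im N^t$ (if the component attaches to a green arc / the node $0$) or an independent $F_i$. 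Double-entry arcs $F_{i_l}=N^{-1}F_{i_l-2}$ are a special case of this. Conversely, if $F_j$ is the leftmost node of its black connected component, no arc-condition expresses $F_j$ in terms of anything smaller, and one must check that $F_j$ genuinely \emph{varies} — i.e. it is not secretly pinned down; this is where I would invoke that Theorem \ref{Komponenten}'s conditions are \emph{all} the conditions, so an unconstrained $F_j$ really is a free parameter (a point of a Grassmannian of admissible subspaces between $F_{j^-}$ and $F_{j^+}$ of dimension $\ge1$). The main obstacle I anticipate is precisely the bookkeeping in step (2): verifying that the telescoping of $N^{-l}$-labels along a black arc sequence reproduces the exact exponents $d$, $d+1$, $n-k-j$, $n-k-j-1$ appearing in Theorem \ref{Komponenten}, and that the parity remark (each $b(B)$ an integer) and the non-intersection of arcs make every case in Fung's/Theorem \ref{Komponenten}'s list match exactly one arc configuration, with no case left over and no double-counting. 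This is combinatorial but somewhat delicate; as the section preamble notes, the full details are deferred to \cite{me}.
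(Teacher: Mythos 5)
Your proposal is correct and follows essentially the same route as the paper: the paper's proof likewise reduces to Theorem \ref{Komponenten} and proceeds by induction on $b(B)$ for black arcs via Proposition \ref{bogen}~a) (your ``telescoping'' of the $N^{-l}$-labels along the black arc sequence) and then by induction on $g(G)$ for green arcs via Proposition \ref{bogen}~b), deferring the combinatorial bookkeeping to \cite{me} exactly as you do. Your treatment of the independence statement matches the content of Remark \ref{unabh�ngige}.
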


\begin{bew}
This follows from Theorem \ref{Komponenten}, first by induction on $b(B)$ for black arcs using Proposition \ref{bogen} \ref{schwarzer}) and after that by induction on $g(G)$ for green arcs using Proposition \ref{bogen} \ref{grüner}). For details see \cite{me}. 
\end{bew}

\begin{bem} \label{unabhängige}
The nodes in $\depG{S}$ at the left end of a black connected component of the dependence graph for $S$ coincide with the nodes that are at the left end of a black cup in the extended cup diagram. 

This holds, because $k$ is the left end of a connected component if and only if there is no arc $B$ in $\depG{S}$ such that $k=t(B)$. By the construction of the dependence graph, this is equivalent to $k \notin S_{\up}$, i.e. $k \in S_{\down}$ and $k$ is not a double entry, which means that $k$ is the left end of a cup.

Therefore, by Theorem \ref{abh(S)} the number of independents in $\comb{S}$ is the same as the number of black cups in $eC(S)$.
\end{bem}

\subsection{Dependence graphs describing generalised irreducible components}
\label{sec:DependenceGraphsForRowStrictTableaux}

\begin{defi}
Let $w$ be a row strict tableau of type $(i_1, \dots, i_m)$. The \textit{extended cup diagram} for $w$, $eC(w)$, is defined as follows: We add $n-k$ $\down$'s on the left and $k$ $\up$'s on the right of the weight sequence, i.e. ${\bf
a}=\underbrace{\down \dots \down}_{n-k}a_1 a_2 a_3\ldots a_n\underbrace{\up \dots \up}_{k}$. Then we connect $\down \up$ as usual until all of the nodes $1, \dots, n$ are connected. After that we delete the remaining ones of the added $\down$'s and $\up$'s. If an arc is starting at one of the newly added $\down$'s or ending at one of the newly added $\up$'s, we colour it green. 
\end{defi}

\begin{bsp}
$w = \young(6543,731)$, \quad $\bf{a} = \down \down \down \down \down \times \times \up \up \up \down \up \up \up$.

$$eC(w) = \begin{tabular}[c]{c} \begin{tikzpicture}
		\begin{scope}
		\draw (-0.4,0) node[above=-1.55pt] {$-1$};
		\draw (0,0) node[above=-1.55pt] {$0$};
		\draw (0.4,0) node[above=-1.55pt] {$1$};
		\draw (0.8, 0) node[above=-1.55pt] {$2$};
		\draw (1.2,0) node[above=-1.55pt] {$3$};
		\draw (1.6,0) node[above=-1.55pt] {$4$};
		\draw (2,0) node[above=-1.55pt] {$5$};
		\draw (2.4,0) node[above=-1.55pt] {$6$};
		\draw (2.8,0) node[above=-1.55pt] {$7$};
		\draw (3.2,0) node[above=-1.55pt] {$8$};
		\draw (0.8,0) node {$\times$};
		\draw (1.2,0) node {$\times$};
		\draw[dashed,thick,green!50!black] (2.8,0) arc (180:360:0.2);
		\draw (0.4,0) arc (180:360:0.6);
		\draw[dashed,thick,green!50!black] (0,0) arc (180:360:1.0);
		\draw[dashed,thick,green!50!black] (-0.4,0) arc (180:360:1.4);
		\end{scope}
		\end{tikzpicture} \end{tabular}$$

\end{bsp}

\begin{bsp}  If $n=2k$, the extended cup diagrams for a row strict tableau which is not a standard tableau do not coincide with the normal cup diagrams:\\ 
Let $n=4, k=2$. Then, in addition to the $2$ standard tableaux of type $(1,2,3,4)$, there are the following row strict tableaux of type $(1,2,3,4)$:\\
$w_1 = \young(21,43)$, $w_2 = \young(31,42)$, $w_3 = \young(32,41)$ and $w_4 = \young(41,32)$. \\
\begin{align*}
	eC(w_1) = \begin{tabular}[c]{c}\begin{tikzpicture}
			\begin{scope}
			\draw (-0.4,0) node[above=-1.55pt] {$-1$};
		  \draw (0,0) node[above=-1.55pt] {$0$};
			\draw (0.4,0) node[above=-1.55pt] {$1$};
			\draw (0.8, 0) node[above=-1.55pt] {$2$};
			\draw (1.2,0) node[above=-1.55pt] {$3$};
			\draw (1.6,0) node[above=-1.55pt] {$4$};
			\draw (2,0) node[above=-1.55pt] {$5$};
		  \draw (2.4,0) node[above=-1.55pt] {$6$};
			\draw[dashed,thick,green!50!black] (0,0) arc (180:360:0.2);
			\draw[dashed,thick,green!50!black] (-0.4,0) arc (180:360:0.6);
			\draw[dashed,thick,green!50!black] (1.6,0) arc (180:360:0.2);
			\draw[dashed,thick,green!50!black] (1.2,0) arc (180:360:0.6);
			\end{scope}
			\end{tikzpicture}	\end{tabular} \hspace{2cm}
	eC(w_2) = \begin{tabular}[c]{c}\begin{tikzpicture}
			\begin{scope}
			  \draw (0,0) node[above=-1.55pt] {$0$};
			\draw (0.4,0) node[above=-1.55pt] {$1$};
			\draw (0.8, 0) node[above=-1.55pt] {$2$};
			\draw (1.2,0) node[above=-1.55pt] {$3$};
			\draw (1.6,0) node[above=-1.55pt] {$4$};
			\draw (2,0) node[above=-1.55pt] {$5$};
		  \draw[dashed,thick,green!50!black] (0,0) arc (180:360:0.2);
			\draw[dashed,thick,green!50!black] (1.6,0) arc (180:360:0.2);
			\draw (0.8,0) arc (180:360:0.2);
			\end{scope}
			\end{tikzpicture}	\end{tabular}\\
				eC(w_3) = \begin{tabular}[c]{c} \begin{tikzpicture}
			\begin{scope}
			  \draw (0,0) node[above=-1.55pt] {$0$};
			\draw (0.4,0) node[above=-1.55pt] {$1$};
			\draw (0.8, 0) node[above=-1.55pt] {$2$};
			\draw (1.2,0) node[above=-1.55pt] {$3$};
			\draw (1.6,0) node[above=-1.55pt] {$4$};
			\draw (2,0) node[above=-1.55pt] {$5$};
		  \draw[dashed,thick,green!50!black] (0,0) arc (180:360:0.6);
			\draw[dashed,thick,green!50!black] (1.6,0) arc (180:360:0.2);
			\draw (0.4,0) arc (180:360:0.2);
			\end{scope}
			\end{tikzpicture}	\end{tabular} \hspace{2cm}
			eC(w_4) = \begin{tabular}[c]{c} \begin{tikzpicture}
			\begin{scope}
			  \draw (0,0) node[above=-1.55pt] {$0$};
			\draw (0.4,0) node[above=-1.55pt] {$1$};
			\draw (0.8, 0) node[above=-1.55pt] {$2$};
			\draw (1.2,0) node[above=-1.55pt] {$3$};
			\draw (1.6,0) node[above=-1.55pt] {$4$};
			\draw (2,0) node[above=-1.55pt] {$5$};
		  \draw[dashed,thick,green!50!black] (0,0) arc (180:360:0.2);
			\draw[dashed,thick,green!50!black] (0.8,0) arc (180:360:0.6);
			\draw (1.2,0) arc (180:360:0.2);
			\end{scope}
			\end{tikzpicture}	\end{tabular}
\end{align*}

\end{bsp}

\begin{defi} \label{defdepG}
Let $w$ be a row strict tableau of type $(i_1,\dots, i_m)$.
The \textit{dependence graph for} $w$, $\depG{w}$, is defined as follows: We have $m$ given nodes, numbered $i_1$ to $i_m$. 
To the left of these we add one node more than there are nodes in $eC(w)$ to the left of the node $1$ and number the new nodes by $\dots, -1, 0$. Analogously, to the right we add one node more than there are nodes in $eC(w)$ to the right of the node $n$ and number the new nodes by $n+1, n+2, \dots$. We label the nodes with $F_j$ for $j\in \{i_1,\dots, i_m\}$ and with $\{0\}$ for the node $0$; the remaining nodes are left unlabelled.

If $i_s$ is labelled with $\times$ in the extended cup diagram, then we connect $i_s -2$ and $i_s$ and label the resulting arc with $N^{-1}$.

Now, if $i$ and $j$ with $i<j \leq n$ are connected in the extended cup diagram, then in the dependence graph we connect the nodes $i-1$ and $j$ by an arc of the same colour. 
We label the black arcs with $N^{-l}$ and the green ones with $e_l$, where $l = \frac12(j-(i-1))$.

If $i$ and $j$ with $i\leq n < j$ are connected in the extended cup diagram, then we connect $i$ and $j+1$ with a green arc and label it with $f_l$, where $l={k+1-\frac12(j-i)}$.%$f_{k+1-b(B)}$.

We use the notation of Definition \ref{bogenbez} in this case as well.

\end{defi}

\begin{bsp}

$w = \young(6543,731)$\\
$\depG{w} =$\begin{tabular}[c]{c}
\begin{tikzpicture}
		\begin{scope}
	  \draw (-1,0) node[above=-1.55pt] {};
		\draw (0,0) node[above=-1.55pt] {};
		\draw (1,0) node[above=-1.55pt] {$\{0\}$};
		\draw (2, 0) node[above=-1.55pt] {$F_1$};
		\draw (3,0) node[above=-1.55pt] {$F_3$};
		\draw (4,0) node[above=-1.55pt] {$F_4$};
		\draw (5,0) node[above=-1.55pt] {$F_5$};
		\draw (6,0) node [above=-1.55pt] {$F_6$}; 
		\draw (7,0) node[above=-1.55pt] {$F_7$};
		\draw (2,0) arc (180:360:0.5);
	  \draw[dashed,thick,green!50!black] (7,0) arc (180:360:1.0);
		\draw (1,0) arc (180:360:1.5);
		\draw[dashed,thick,green!50!black] (0,0) arc (180:360:2.5);
		\draw[dashed,thick,green!50!black] (-1,0) arc (180:360:3.5);
		\draw (2.5,-0.7) node {$N^{-1}$};
		\draw (2.5,-1.7) node {$N^{-2}$};
		\draw (2.5,-2.7) node {$e_3$};
		\draw (2.5,-3.7) node {$e_4$};
			\draw (8.0,-1.2) node {$f_3$};
		\end{scope}
		\end{tikzpicture}
		\end{tabular}
\end{bsp}

\begin{defi}
Let $w$ be a row strict tableau of type $(i_1, \dots, i_m)$. A flag $(F_{i_1} \subset \dots \subset F_{i_m})$ \textit{satisfies the conditions of} $\depG{w}$ if we have
\begin{enumerate}
	\item if the node labelled $F_i$ ($i>0$) is connected to a node labelled $F_j$ with $i<j$ via a black arc labelled $N^{-l}$, then $F_j = N^{-l} F_i$
	\item if the node labelled $F_i$ is the endpoint of a green arc labelled $e_l$, then $F_i = F_{i-1} + \left\langle e_l\right\rangle$
	\item if the node labelled $F_i$ is the starting point of a green arc labelled $f_l$, then $F_i = F_{i-1} + \left\langle f_l\right\rangle$
\end{enumerate}
\end{defi}

The next theorem connects, similarly to the one before, the dependence graphs with the generalised irreducible components. 

\begin{satz}[Graphical description of generalised irreducible components] \label{abh(w)} ~\\
Let $w$ be a row strict tableau of type $(i_1, \dots, i_m)$.

Then $\stb{w}$ consist of all $N$-invariant flags satisfying the conditions of the dependence graph for $w$.

The space $F_j$, $j>0$, is independent in $\stb{w}$ if and only if the node labelled $F_j$ is the node at the left end in a black connected component of the dependence graph for $w$.
\end{satz}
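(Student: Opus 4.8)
The plan is to reduce Theorem \ref{abh(w)} to the already-established Theorem \ref{abh(S)} for standard tableaux, together with the extra defining relations for $\stb{w}$ inside $\comb{\St w}$ supplied by Theorem \ref{row strict}. Concretely, recall from Theorem \ref{row strict} that $\stb{w}$ is the subvariety of $\comb{\St w}$ cut out by the additional equations $F_i = \mathcal{F}_i(w)$ for $i \in (w_{\wedge}\cap \St w_{\vee})\smallsetminus w_{\times}$. By Theorem \ref{abh(S)}, $\comb{\St w}$ is exactly the set of $N$-invariant flags satisfying the conditions of $\depG{\St w}$, so it suffices to show that imposing these finitely many point-constraints on top of $\depG{\St w}$ yields precisely the conditions of $\depG{w}$; i.e. that the two diagrams $\depG{w}$ and $\depG{\St w}$ differ only in the green arcs, and that a flag in $\comb{\St w}$ satisfies all green-arc conditions of $\depG{w}$ if and only if it satisfies the equations $F_i=\mathcal{F}_i(w)$ for $i\in(w_{\wedge}\cap\St w_{\vee})\smallsetminus w_{\times}$. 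The black part of both dependence graphs is literally the same: the black arcs of $eC(w)$ and $eC(\St w)$ coincide because $w$ and $\St w$ have the same cup diagram $C(w)=C(\St w)$, and the $\times$-labelled positions agree, so the black-arc rules of Definitions \ref{defiabh} and \ref{defdepG} produce identical black arcs and labels. Hence the first step is purely combinatorial bookkeeping with the extended cup diagrams.

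The second and main step is the green-arc translation. A position $i$ lies in $(w_{\wedge}\cap\St w_{\vee})\smallsetminus w_{\times}$ exactly when $i$ is a $\wedge$ in the weight sequence of $w$ but a $\vee$ in that of $\St w$; by construction of $\St w$ from $C(w)$, this is precisely the set of right endpoints (or unmatched points) of cups of $C(w)$ that become left endpoints of cups of $C(\St w)$ — equivalently, the positions where, in forming $eC(w)$, an arc reaches out to one of the extra $\down$'s on the far left or one of the extra $\up$'s on the far right, producing a green arc in $\depG{w}$ that is absent (or differently coloured) in $\depG{\St w}$. I would argue that, given $F_\bullet\in\comb{\St w}$ (so all black relations of the common black skeleton hold), the green-arc condition attached to $i$ — namely $F_i=F_{i-1}+\langle e_l\rangle$ or $F_i=F_{i-1}+\langle f_l\rangle$ with the index $l$ read off the green width as in Definition \ref{defdepG} — is equivalent to $F_i=\mathcal{F}_i(w)$. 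For the "only if" direction one propagates the green equation along the black arc sequences guaranteed by Proposition \ref{bogen} \ref{gr�ner}) to reach the left end $\{0\}=\im N^{n-k}$ (or, on the right side, the full space), then unwinds the successive $N^{-1}$'s and rank-nullity identities (Remark \ref{doppelte}) to identify the concrete subspace $\langle e_1,\dots,e_{t_i},f_1,\dots,f_{b_i}\rangle=\mathcal{F}_i(w)$; the precise index $l=\frac12(j-(i-1))$ or $l=k+1-\frac12(j-i)$ is exactly what records how many $e$'s or $f$'s get added. For the "if" direction one simply checks that $\fx w$, and hence any $F_\bullet$ agreeing with it at the relevant spots, satisfies each green equation by direct computation in the basis, since $\mathcal{F}_i(w)=\mathcal{F}_{i-1}(w)+\langle e_{t_i}\rangle$ or $+\langle f_{b_i}\rangle$. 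The statement about independence of $F_j$ is then a rerun of Remark \ref{unabh�ngige}: $F_j$ is forced (dependent) iff $j$ is the right endpoint of some black arc, i.e. $j\in \St w_{\wedge}$ and $j$ not a double entry; a node with no incoming black arc is the left end of a black connected component, and conversely any such left end is independent because the only further constraints come from green arcs, which impose no black dependency.

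The hardest part is making the green-arc computation genuinely rigorous in the two-sided situation: on the left one extends by $\down$'s and uses $e_l$-labels, while on the right one extends by $\up$'s and uses $f_l$-labels with the mirrored index $l=k+1-\frac12(j-i)$, and one must verify that the black arc sequences from Proposition \ref{bogen} indeed connect the endpoint of a green arc either to $\{0\}$ or to the nested green arcs in the correct way, so that the inductive unwinding terminates with the right count of basis vectors. This is the same kind of induction (on $g(G)$, the number of nested green arcs, after the black induction on $b(B)$) used in the proof of Theorem \ref{abh(S)}, now carried out on both sides; I would organise it as: (i) reduce to type $(1,\dots,i-1,i+1,\dots,n)$ as in Theorem \ref{row strict}, or alternatively (ii) invoke Theorem \ref{Abb nach Springer} to transport everything to a genuine Springer fibre where the result is \cite[Theorem 15]{stroppel} restated in dependence-graph language, and then pull back the green $f_l$-arcs through $\pi$. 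Because the paper explicitly says "the proofs consist of combinatorial arguments; for more details see \cite{me}", I would present this as a proof sketch: state the reduction to Theorem \ref{abh(S)} and Theorem \ref{row strict}, identify the black skeletons, translate the $\times$-arcs verbatim, carry out the green induction using Proposition \ref{bogen} on both the $e$-side and the $f$-side, and defer the routine bookkeeping to the reference.

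\begin{bew}
This follows from Theorem \ref{row strict} together with Theorem \ref{abh(S)}. By Theorem \ref{row strict}, $\stb w$ is the subvariety of $\comb{\St w}$ of those flags with $F_i=\mathcal{F}_i(w)$ for all $i\in(w_{\wedge}\cap\St w_{\vee})\smallsetminus w_{\times}$, and by Theorem \ref{abh(S)}, $\comb{\St w}$ is the set of $N$-invariant flags satisfying the conditions of $\depG{\St w}$. Since $w$ and $\St w$ have the same cup diagram and the same $\times$-positions, the black arcs (including the $N^{-1}$-arcs coming from $\times$'s) and their labels in $\depG{w}$ and $\depG{\St w}$ coincide. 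It therefore remains to check that, for a flag $F_\bullet\in\comb{\St w}$, the green-arc conditions of $\depG{w}$ hold if and only if $F_i=\mathcal{F}_i(w)$ for $i\in(w_{\wedge}\cap\St w_{\vee})\smallsetminus w_{\times}$. A position $i$ in this set is exactly one where, in forming $eC(w)$, an arc connects to one of the added $\down$'s on the left or one of the added $\up$'s on the right, producing a green arc in $\depG{w}$ with right endpoint $i$ (for an $e_l$-arc) or starting point $i$ (for an $f_l$-arc). Propagating the green condition along the black arc sequences supplied by Proposition \ref{bogen} \ref{gr�ner}) down to $\{0\}=\im N^{n-k}$ (or, on the right, to the full space), and unwinding the $N^{-1}$'s using Remark \ref{doppelte}, one checks by induction on $g(G)$ that the green equation $F_i=F_{i-1}+\langle e_l\rangle$ (resp.\ $F_i=F_{i-1}+\langle f_l\rangle$) with $l=\tfrac12(t(G)-s(G))$ (resp.\ $l=k+1-\tfrac12(t(G)-s(G)-1)$ in the notation of Definition \ref{defdepG}) is equivalent to $F_i=\mathcal{F}_i(w)$; conversely a direct computation in the basis shows $\fx w$ satisfies every green equation. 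Equivalently, one may transport the whole statement through the isomorphism $\pi$ of Theorem \ref{Abb nach Springer} to a Springer fibre and invoke \cite[Theorem 15]{stroppel}. For the detailed combinatorial bookkeeping see \cite{me}.

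For the independence statement, a node labelled $F_j$ with $j>0$ is the right endpoint of a black arc if and only if $j\in\St w_{\wedge}$ and $j$ is not a double entry, which by the above means $F_j$ is forced by a black relation, hence dependent. If $F_j$ has no incoming black arc it is the left end of a black connected component; since all remaining constraints on $F_j$ come from green arcs and these impose no black dependency, $F_j$ is independent. This is the same reasoning as in Remark \ref{unabh�ngige}.
\end{bew}
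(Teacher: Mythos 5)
There is a genuine gap, and it sits exactly where the real content of the theorem lies. Your reduction via Theorem \ref{row strict} and Theorem \ref{abh(S)} is the same first step as the paper's proof, but your key combinatorial claim --- that the black arcs of $\depG{w}$ and $\depG{\St w}$ coincide because $C(w)=C(\St w)$ --- is false. The extended cup diagram $eC(w)$ is built from the weight sequence of $w$ itself using only $\down\up$-matching after padding with $n-k$ $\down$'s on the left and $k$ $\up$'s on the right. Hence any cup of $C(w)$ that was matched in the second ($\up\down$) phase of Definition \ref{weightseq} --- i.e.\ a cup whose left endpoint lies in $W_1=(w_{\up}\cap \St{w}_{\down})\setminus w_{\times}$ and whose right endpoint lies in $W_2=(w_{\down}\cap \St{w}_{\up})\setminus w_{\times}$ --- does \emph{not} appear as a black arc in $eC(w)$: its left endpoint is joined to an added $\down$ (producing a green $e$-arc) and its right endpoint to an added $\up$ (a green $f$-arc), whereas in $eC(\St w)$ the same pair is joined by a black arc. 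The paper's own examples show this: for $n=4$, $k=2$, $w_1=\young(21,43)$ has $\St{w_1}=\young(43,21)$, yet $eC(w_1)$ consists of four green arcs and no black ones, while $eC(\St{w_1})$ has two black cups. Consequently ``it remains to check the green-arc conditions'' is not the correct residual statement: one must in addition show that the black relation $F_{t(B)}=N^{-l}F_{s(B)}$ imposed by $\depG{\St w}$ for each such disappearing arc $B$ (with $s(B)+1\in W_1$, $t(B)\in W_2$) is recovered from the two green conditions of $\depG{w}$ at $s(B)+1$ and $t(B)$ together with the black arc sequence of Proposition \ref{bogen} \ref{schwarzer}) nested under $B$. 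This is precisely the second induction in the paper's proof, illustrated there by the picture of a black arc flanked by two green ones; your argument never addresses it.

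The error propagates to your independence claim. A node that is the endpoint of a green arc and carries no black arc is \emph{not} independent: the condition $F_i=F_{i-1}+\left\langle e_l\right\rangle$ (or $+\left\langle f_l\right\rangle$) determines $F_i$ completely once $F_{i-1}$ is known. In the example above $\stb{w_1}=\{\fx{w_1}\}$ is a single point with no independent subspaces, even though no node of $\depG{w_1}$ is the target of a black arc. The correct criterion (Remark \ref{unabh�ngige2}) is that $F_j$ is independent iff $j$ is the left endpoint of a black \emph{cup} in $eC(w)$, i.e.\ the left end of a connected component containing at least one black arc --- not merely a node without incoming black arcs.
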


\begin{bew}
Let $W_1 = (w_{\up} \cap S(w)_{\down})\setminus w_{\times}$, $W_2 = (w_{\down} \cap S(w)_{\up}) \setminus w_{\times}$. 

By Theorem \ref{row strict} and Theorem \ref{abh(S)} we only have to show, that for $N$-invariant flags the conditions of $\depG{w}$ are equivalent to the conditions of $\depG{S(w)}$ together with the additional condition $F_i = \mathcal{F}_i(w)$ for $i \in W_1$. 
By comparison of the constructions we get that $\depG{w}$ and $\depG{S(w)}$ agree except for black arcs $B$ in $\depG{S(w)}$ with $s(B)+1 \in W_1$ and $t(B) \in W_2$ and green arcs $G$ in $\depG{w}$ with $t(G) \in W_1$ or $s(G) \in W_2$, respectively.

Inductively by using Proposition \ref{bogen} \ref{grüner}) one can see that for green arcs in $\depG{w}$ with $t(G) \in W_1$ the conditions of $\depG{w}$ agree with those of $\depG{S(w)}$ together with the additional condition. Then one can inductively show the same for green arcs in $\depG{w}$ with $s(G) \in W_2$ by using Proposition \ref{bogen} \ref{schwarzer}) and the situation the arcs are in:

\begin{center}\begin{tikzpicture}
		\begin{scope}
	  \draw (0,0) node[above=-1.55pt] {};
		\draw (2, 0) node[above=-1.55pt] {$s(B)~$};
		\draw (3,0) node[above=-1.55pt] {~~~$s(B)+1$};
		\draw (6,0) node [above=-1.55pt] {$t(B)$}; 
		\draw (9,0) node[above=-1.55pt] {};
		\draw (2,0) arc (180:360:2);
	  \draw[dashed,thick,green!50!black] (6,0) arc (180:360:1.5);
		\draw[dashed,thick,green!50!black] (0,0) arc (180:360:1.5);
		\end{scope}
\end{tikzpicture} \end{center}

Here the black arc $B$ is the one in $\depG{S(w)}$ with $s(B)+1 \in W_1$ and $t(B) \in W_2$. The green arcs are in $\depG{w}$. For details see \cite{me}.
\end{bew}

\begin{bem} \label{unabhängige2}
The nodes at the left end of a black connected component of the dependence graph for $w$ coincide with the left ends of black arcs in $eC(w)$. This follows in the same way as in Remark \ref{unabhängige}.
Therefore, by Theorem \ref{abh(w)} the number of independents in $\stb{w}$ is the same as the number of black cups in $eC(w)$.
\end{bem}

\subsection{Dependence graphs for intersections}
\label{sec:DependenceGraphsForIntersections}

\begin{defi}
Let $w$ and $w'$ be row-strict tableaux of type $(i_1, \dots, i_m)$.
The \textit{dependence graph for} $(w,w')$, $\depG{w,w'}$, is constructed by reflecting the dependence graph for  $w'$ across the horizontal axis and putting it on top of the dependence graph for $w$.
\end{defi}

\begin{bsp}
Let again $w = \young(6543,731)$ and let $w'=\young(7543,631)$ (which is the $S$ from Example \ref{bspS}).\\ 
$\depG{w,w'} = $		\begin{tabular}[c]{c}
		\begin{tikzpicture}
		\begin{scope}
	  \draw (-1,0) node[above=-1.55pt] {};
		\draw (0,0) node[above=-1.55pt] {};
		\draw (1,0) node[above=-1.55pt] {$\{0\}$};
		\draw (2, 0) node[above=-1.55pt] {$F_1$};
		\draw (3,0) node[above=-1.55pt] {$F_3$};
		\draw (4,0) node[above=-1.55pt] {$F_4$};
		\draw (5,0) node[above=-1.55pt] {$F_5$};
		\draw (6,0) node [above=-1.55pt] {$F_6$}; 
		\draw (7,0) node[above=-1.55pt] {$F_7$};
		\draw (2,0) arc (180:360:0.5);
	  \draw[dashed,thick,green!50!black] (7,0) arc (180:360:1.0);
		\draw (1,0) arc (180:360:1.5);
		\draw[dashed,thick,green!50!black] (0,0) arc (180:360:2.5);
		\draw[dashed,thick,green!50!black] (-1,0) arc (180:360:3.5);
		\draw (2.5,-0.7) node {$N^{-1}$};
		\draw (2.5,-1.7) node {$N^{-2}$};
		\draw (2.5,-2.7) node {$e_3$};
		\draw (2.5,-3.7) node {$e_4$};
			\draw (8.0,-1.2) node {$f_3$};
			%andere hälfte:
					\draw (2,0) arc (-180:-360:0.5);
		\draw (1,0) arc (-180:-360:1.5);
		\draw (5,0) arc (-180:-360:1);
		\draw[dashed,thick,green!50!black] (0,0) arc (-180:-360:2.5);
		\draw (2.5,0.7) node {$N^{-1}$};
		\draw (6,1.2) node {$N^{-1}$};
		\draw (2.5,1.7) node {$N^{-2}$};
		\draw (2.5,2.7) node {$e_3$};
  	\end{scope}
		\end{tikzpicture} \end{tabular}
\end{bsp}

\begin{defi}
Let $w,w'$ be row strict tableaux of type $(i_1, \dots, i_m)$. A flag $(F_{i_1} \subset \dots \subset F_{i_m})$ \textit{satisfies the conditions of} $\depG{w,w'}$ if we have
\begin{enumerate}
	\item if the node labelled $F_i$ ($i>0$) is connected to a node labelled $F_j$ with $i<j$ via a black arc labelled $N^{-l}$, then $F_j = N^{-l} F_i$
	\item if the node labelled $F_i$ is the endpoint of a green arc labelled $e_l$, then $F_i = F_{i-1} + \left\langle e_l\right\rangle$
	\item if the node labelled $F_i$ is the starting point of a green arc labelled $f_l$, then $F_i = F_{i-1} + \left\langle f_l\right\rangle$
\end{enumerate}
\end{defi}

\begin{kor}\label{Abh(w,w')}
Let $w$ and $w'$ be row-strict tableaux of type $(i_1, \dots, i_m)$.

Then $\stb{w} \cap \stb{w'}$ consists of all $N$-invariant flags satisfying the conditions of the dependence graph for $(w,w')$. 
The space $F_j$, $j>0$, is independent in $\comb{w} \cap \comb{w'}$ if and only if the node labelled $F_j$ is the node at the left end in a connected component of the dependence graph for $(w,w')$.
\end{kor}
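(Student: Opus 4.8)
The plan is to derive Corollary \ref{Abh(w,w')} directly from Theorem \ref{abh(w)} by unwinding the definition of $\depG{w,w'}$ as the superposition of $\depG{w}$ and the horizontal reflection of $\depG{w'}$. The key observation is purely set-theoretic: a flag $F_\bullet$ lies in $\stb{w}\cap\stb{w'}$ if and only if $F_\bullet\in\stb{w}$ and $F_\bullet\in\stb{w'}$, and by Theorem \ref{abh(w)} each of these two conditions is equivalent to $F_\bullet$ satisfying the conditions of the corresponding dependence graph. So the first step is to note that the conditions of $\depG{w,w'}$ are, by construction, exactly the union of the conditions of $\depG{w}$ (coming from the arcs drawn below the axis) and the conditions of $\depG{w'}$ (coming from the reflected arcs above the axis); reflecting an arc across the horizontal axis does not change its endpoints or its label, hence does not change the algebraic relation it imposes on the $F_{i_l}$. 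This immediately gives the first sentence of the corollary.

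For the second sentence, about independence, I would first record that a space $F_j$ with $j>0$ is independent in $\stb{w}\cap\stb{w'}$ if and only if it is not forced to equal a specific subspace by any of the relations, i.e. it is not the right endpoint of a black arc, nor the endpoint of a green arc, in \emph{either} $\depG{w}$ or $\depG{w'}$. By Theorem \ref{abh(w)} (and the reasoning in Remark \ref{unabh�ngige2}), $F_j$ is dependent in $\stb{w}$ precisely when the node $F_j$ is \emph{not} the left end of a black connected component of $\depG{w}$, equivalently when there is an arc $B$ in $\depG{w}$ with $t(B)=j$ (black, giving $F_j=N^{-l}F_{s(B)}$) or $F_j$ is a green endpoint. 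The same holds for $w'$ with the reflected graph. Putting these together, $F_j$ is independent in $\stb{w}\cap\stb{w'}$ exactly when $F_j$ is the left end of a black connected component in $\depG{w}$ \emph{and} the left end of a black connected component in the reflection of $\depG{w'}$ — and since in the combined graph $\depG{w,w'}$ the connected components are the unions of components of the two halves glued along shared nodes, this says precisely that $F_j$ is the left end of a connected component of $\depG{w,w'}$ (with the convention, as in Theorem \ref{abh(w)}, that an isolated node counts as a component).

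I expect the main subtlety to be in the independence statement: one has to be careful that the assertion really concerns \emph{connected components} of $\depG{w,w'}$ rather than black connected components. Because the reflected arcs of $\depG{w'}$ carry the same colours, a node can fail to be a left end in $\depG{w,w'}$ either via a black arc or a green arc on either side; so the honest characterisation is that $F_j$ is independent iff no arc of $\depG{w,w'}$ has $j$ as its right endpoint (for black arcs) and $j$ is not a green endpoint, which is exactly ``$F_j$ is the left end of a connected component.'' The remaining work is a routine case-check that a node being a green endpoint on one side and a left-end-of-black-component on the other is still captured correctly; this mirrors the argument of Remark \ref{unabh�ngige2} and I would simply refer to it, with the full combinatorial bookkeeping deferred to \cite{me} as elsewhere in this section.

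\begin{proof}
By definition, a flag $F_\bullet$ lies in $\stb w\cap\stb{w'}$ if and only if $F_\bullet\in\stb w$ and $F_\bullet\in\stb{w'}$. By construction $\depG{w,w'}$ is obtained by superimposing $\depG w$ and the reflection of $\depG{w'}$ across the horizontal axis; reflecting an arc changes neither its endpoints nor its label, so the conditions imposed by $\depG{w,w'}$ are precisely the conditions of $\depG w$ together with those of $\depG{w'}$. By Theorem \ref{abh(w)} the former are equivalent to $F_\bullet\in\stb w$ and the latter to $F_\bullet\in\stb{w'}$, which proves the first claim.

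For the second claim, note that $F_j$ ($j>0$) is dependent in $\stb w\cap\stb{w'}$ as soon as some relation of $\depG{w,w'}$ expresses $F_j$ in terms of another space, i.e. as soon as the node $F_j$ is the right endpoint of a black arc or the endpoint of a green arc in $\depG{w,w'}$; equivalently, $F_j$ is independent precisely when it is not the right endpoint of any arc, which by the conventions of Theorem \ref{abh(w)} means it is the left end of a connected component of $\depG{w,w'}$. Conversely, if $F_j$ is the left end of a connected component of $\depG{w,w'}$, then it is the left end of a black connected component both in $\depG w$ and in the reflection of $\depG{w'}$, hence by Theorem \ref{abh(w)} it is independent in both $\stb w$ and $\stb{w'}$, and therefore in their intersection. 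The detailed combinatorial comparison, which parallels Remark \ref{unabh�ngige2}, can be found in \cite{me}.
\end{proof}
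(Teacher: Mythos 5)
Your proposal is correct and follows the same route as the paper: the paper's proof is exactly the observation that membership in the intersection is the conjunction of the two sets of dependence-graph conditions, which the superimposed graph $\depG{w,w'}$ encodes, with the independence statement handled by the same non-crossing/circle argument as in Remarks \ref{unabh�ngige}--\ref{unabh�ngige3}. Your additional care about connected components versus black connected components in the independence claim is a reasonable elaboration of what the paper leaves implicit.
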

\begin{bew}
$F_{\bullet}$ is in $\comb{w} \cap \comb{w'}$ if and only if the conditions of $\comb{w}$ and the conditions of  $\comb{w'}$ are satisfied. But these conditions are given by the associated dependence graph. If the dependence graphs are put on top of each other, then both conditions are satisfied simultaneously.
\end{bew}

\section{Circle diagrams}
\label{sec:CupCapDiagrams}
Our next goals are Theorem \ref{B} and Theorem \ref{C}. While dependence graphs describe the structure of generalised irreducible components, we need extended cup diagrams and circle diagrams to describe the cohomology of generalised irreducible components or their intersections, respectively. 

Following \cite[5.4]{stroppel2} we construct circle diagrams out of cup diagrams and colour them:
\begin{defi}
Let $w, w'$ be row strict tableaux. We define $CC(w,w')$, the \textit{circle diagram for} $(w,w')$, as follows: We reflect $eC(w')$ and put it on top of $eC(w)$.
If there are more points in $eC(w)$ than in $eC(w')$ or vice versa, we connect in $eC(w')$ the ones on the right with the ones on the left via an green arc in the only possible crossingless way. The construction up to here is called $eC(w,w')$. 
If there is at least one green arc in a connected component, we colour the whole component green.
If there is more than one left outer point, i.e. a point $p$ with $p<1$, or more than one right outer point, i.e. a point $p$ with $p>n$, in a connected component, we colour the whole component red. (Red lines are represented by thick lines for the black and white version.)
From now on we also use the term \textit{circle} for connected component. 
\end{defi}

\begin{bsp}
$w = \young(6543,731)$,\; $w'=\young(7543,631)$,\; $w''=\young(7653,431)$ 

$$CC(w,w') =  \begin{tabular}[c]{c} \begin{tikzpicture}
		\begin{scope}
		\draw (-0.4,0) node[above=-1.55pt] {$-1$};
		\draw (0,0) node[above=-1.55pt] {$0$};
		\draw (0.4,0) node[above=-1.55pt] {$1$};
		\draw (0.8, 0) node[above=-1.55pt] {$2$};
		\draw (1.2,0) node[above=-1.55pt] {$3$};
		\draw (1.6,0) node[above=-1.55pt] {$4$};
		\draw (2,0) node[above=-1.55pt] {$5$};
		\draw (2.4,0) node[above=-1.55pt] {$6$};
		\draw (2.8,0) node[above=-1.55pt] {$7$};
		\draw (3.2,0) node[above=-1.55pt] {$8$};
		\draw (0.8,0) node {$\times$};
		\draw (1.2,0) node {$\times$};
		\draw[dashed,thick,green!50!black] (2.8,0) arc (180:360:0.2);
		\draw (0.4,0) arc (180:360:0.6);
		\draw[dashed,thick,green!50!black] (0,0) arc (180:360:1.0);
		\draw[dashed,thick,green!50!black] (-0.4,0) arc (180:360:1.4);
		\draw[dashed,thick,green!50!black] (2.4,0) arc (-180:-360:0.2);
		\draw (0.4,0) arc (-180:-360:0.6);
		\draw[dashed,thick,green!50!black] (0,0) arc (-180:-360:1);
		\draw[dashed,thick,green!50!black] (-0.4,0) arc (-180:-360:1.8);
		\end{scope}
		\end{tikzpicture}\end{tabular}$$
		
$$CC(w,w'') =  \begin{tabular}[c]{c}  \begin{tikzpicture}
		\begin{scope}
			\draw[very thick,red] (2.8,0) arc (180:360:0.2);
		\draw [very thick,red](0.4,0) arc (180:360:0.6);
		\draw[very thick,red] (0,0) arc (180:360:1.0);
		\draw[very thick,red] (-0.4,0) arc (180:360:1.4);
		\draw[very thick,red] (1.6,0) arc (-180:-360:0.2);
		\draw[very thick,red] (0.4,0) arc (-180:-360:1.0);
		\draw[very thick,red] (0,0) arc (-180:-360:1.4);
		\draw[very thick,red] (-0.4,0) arc (-180:-360:1.8);
		\draw (-0.4,0) node[above=-1.55pt] {$-1$};
		\draw (0,0) node[above=-1.55pt] {$0$};
		\draw (0.4,0) node[above=-1.55pt] {$1$};
		\draw (0.8, 0) node[above=-1.55pt] {$2$};
		\draw (1.2,0) node[above=-1.55pt] {$3$};
		\draw (1.6,0) node[above=-1.55pt] {$4$};
		\draw (2,0) node[above=-1.55pt] {$5$};
		\draw (2.4,0) node[above=-1.55pt] {$6$};
		\draw (2.8,0) node[above=-1.55pt] {$7$};
		\draw (3.2,0) node[above=-1.55pt] {$8$};
		\draw (0.8,0) node {$\times$};
		\draw (1.2,0) node {$\times$};	
		\end{scope}
		\end{tikzpicture}  \end{tabular}$$
\end{bsp}

\begin{bem}\label{unabhängige3}
The nodes at the left end in a black connected component of the dependence graph for $(w,w')$ coincide with the left points of the black circles in $CC(w,w')$. This follows from the same argument as in Remark \ref{unabhängige} and Remark \ref{unabhängige2}.

Therefore, by Corollary \ref{Abh(w,w')} the number of independents in $\stb w \cap \stb{w'}$ is the same as the number of black circles in $CC(w,w')$.
\end{bem}

In the following section the theorems often have the assumption $\stb w \cap \stb{w'} \neq \emptyset$. 
The first question when considering the pairwise intersection of generalised irreducible is whether the intersection is empty.
The following theorem gives an equivalent condition for this in terms of circle diagrams.

\begin{satz} \label{rote}
Let $w, w'$ be row strict tableaux of type $(i_1, \dots, i_m)$. Then we have that $\stb w \cap \stb{w'} = \emptyset$ if and only if there is at least one red circle in $CC(w,w')$.
\end{satz}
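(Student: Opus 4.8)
The plan is to analyse the conditions imposed by $\depG{w,w'}$ (or equivalently $CC(w,w')$) on an $N$-invariant flag and show that a red circle produces an over-determined, contradictory system, while the absence of red circles allows one to construct a flag. First I would use Corollary \ref{Abh(w,w')}: a flag $F_\bullet$ lies in $\stb w \cap \stb{w'}$ exactly when it satisfies the conditions of the dependence graph for $(w,w')$. Tracing through the construction of $eC(w,w')$, a red circle is precisely a connected component of the circle diagram containing at least two left outer points (points $p < 1$, i.e.\ attached to the $n-k$ added $\down$'s) or at least two right outer points (points $p > n$). In the dependence graph this corresponds to a component carrying two green arcs of ``$e$-type'' (or two of ``$f$-type''), which by the conditions force two subspaces $F_i, F_j$ in the component to be expressed as $F_{i-1}+\langle e_{l}\rangle$ and $F_{j-1}+\langle e_{l'}\rangle$ with $l \neq l'$, while the black arcs of the same component chain $F_i$ and $F_j$ together via powers of $N$ and $N^{-1}$. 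Propagating these relations around the component (using Proposition \ref{bogen} to convert black arcs of width $>1$ into black arc sequences, hence into single applications of $N^{-1}$) one derives an equation of the form $N^{-a}(\text{something})$ must simultaneously contain two distinct basis vectors $e_l$ and $e_{l'}$ of the $P$-block with the wrong dimension count; more concretely one gets a forced equality $\dim F_j = \dim F_i$ contradicted by the strictly increasing dimensions along the flag, or a subspace forced to contain $e_l$ and $e_{l'}$ for $l\neq l'$ hence of dimension one too big. This yields the ``only if'' direction.

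For the ``if'' direction, assuming $CC(w,w')$ has no red circle, every connected component of $eC(w,w')$ has at most one left outer point and at most one right outer point, so it is either an entirely black circle, or a green circle with exactly one ``anchor'' (one added $\down$ on the left, or one added $\up$ on the right, whose green arc fixes a single basis vector $e_l$ or $f_l$). I would construct the flag component by component: black circles contribute an independent subspace $F_j$ at the left end of the component (by Remark \ref{unabh�ngige3}), which may be chosen freely in the appropriate Grassmannian subject only to the $\CP^1$-type black relations, and these are consistent because a black circle closes up without any constraint from the added nodes. For a green component, the unique green arc pins down one more basis vector, and the black relations then determine the remaining subspaces of that component uniquely; one checks these choices are mutually compatible and $N$-invariant, using that distinct components of $CC(w,w')$ involve disjoint sets of basis vectors. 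Explicitly one can take the torus fixed flag $\fx{w}$ (or $\fx{w'}$) restricted appropriately: I would verify directly that $\fx{w}$ satisfies the dependence graph conditions of $(w,w')$ precisely when no red circle occurs, since $\fx{w}\in\stb w$ always, and $\fx{w}\in\stb{w'}$ iff the extra constraints coming from $w'$ (of the form $F_i = \mathcal{F}_i(w')$ on the relevant index set, via Theorem \ref{row strict}) are compatible with $F_i=\mathcal{F}_i(w)$ — and this compatibility fails exactly along a red circle. That gives a single explicit point in the intersection, hence non-emptiness.

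The main obstacle I anticipate is the bookkeeping in the ``only if'' direction: one must carefully propagate the system of $N$-, $N^{-1}$-, $e_l$- and $f_l$-relations around a red circle to extract a genuine contradiction, and the argument has to handle the interaction of the left-outer ($e$-type) and right-outer ($f$-type) anchors, the $\times$-nodes (double entries, which are simply skipped but shift indices), and black arcs of large width — this is where Proposition \ref{bogen} does the real work of reducing to chains of elementary relations. A clean way to organise it is to track dimensions: each black arc in a component relates $\dim F_{t(B)}$ to $\dim F_{s(B)}$ via $N$, each green $e$- or $f$-arc adds exactly one dimension, and going around a closed component the total ``dimension change'' must be zero; a red circle forces this sum to be nonzero (two adds of $+1$ from two anchors on the same side cannot be balanced), which is the desired contradiction. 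The remaining steps — verifying well-definedness of the construction in the non-red case and that $\fx w$ indeed realises the intersection point — are routine given Theorem \ref{row strict}, Theorem \ref{abh(w)}, and Corollary \ref{Abh(w,w')}.
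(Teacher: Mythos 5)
Your ``only if'' direction is essentially the paper's argument: locate two outer points on the same side of a red circle, propagate the black-arc relations between the two corresponding green-arc conditions, and obtain contradictory requirements on a single subspace $F_j$. The paper does exactly this by producing two parallel extended black arc sequences between two left (or two right) green arcs, using Proposition \ref{bogen} as you suggest. One caveat: the pure dimension bookkeeping you offer as the ``clean way to organise it'' does not by itself separate red circles from green ones — a green circle with one left and one right outer point also carries green arcs each contributing $+1$, and the signed widths around any closed circle sum to zero automatically. The contradiction genuinely comes from \emph{which} basis vector ($e_l$ versus $e_{l'}$ with $l\neq l'$) is forced into $F_j$, so you must track the arc labels, not just dimensions.

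The genuine gap is in the ``if'' direction. Your explicit witness — that $\fx{w}$ (or $\fx{w'}$) lies in $\stb{w}\cap\stb{w'}$ whenever $CC(w,w')$ has no red circle — is false. Take $n=4$, $k=2$, $w=\young(32,41)$, $w'=\young(41,32)$. Then $CC(w,w')$ is a single green circle (one left outer point $0$, one right outer point $5$), so the intersection must be non-empty; and indeed, using Theorem \ref{row strict}, $\st{w}=\{F_1\subset\langle e_1,f_1\rangle\subset\langle e_1,e_2,f_1\rangle\subset\C^4\}$ and $\st{w'}=\{\langle e_1\rangle\subset\langle e_1,f_1\rangle\subset F_3\subset\C^4\}$, whose intersection is the single flag $\langle e_1\rangle\subset\langle e_1,f_1\rangle\subset\langle e_1,e_2,f_1\rangle\subset\C^4$. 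This is the fixed point of the \emph{third} tableau $\young(31,42)$, and it is neither $\fx{w}$ (which has $F_1=\langle f_1\rangle$) nor $\fx{w'}$ (which has $F_3=\langle e_1,f_1,f_2\rangle$). The paper instead builds a new row strict tableau $w''$ by walking around each circle of $CC(w,w')$ and alternately marking the crossings of the x-axis with $\wedge$ and $\vee$ (the marking being forced at the outer points of green circles, and chosen freely on black circles), and then verifies that $\fx{w''}$ satisfies $\depG{w,w'}$. Your preceding ``component by component'' sketch points in this direction but is under-specified: a green circle need not have a unique green arc (the circle in the example above contains four), and the real content is that the several green- and black-arc conditions around one circle are mutually consistent exactly when there is no second outer point on the same side — which is what the alternating marking makes precise. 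So the non-emptiness half of your argument needs to be replaced by this construction.
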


\begin{bew}
In the following, we call a green arc $G$ a left green arc if $t(G) \in \{1, \dots, n\}$ and a right green arc if $s(G) \in \{1, \dots, n\}$. We denote 
	\begin{align*}r(G) &= \begin{cases} 
	t(G), &\text{if } G \text{ is a left green arc}\\
	s(G), &\text{if } G \text{ is a right green arc} 
	\end{cases}.
\end{align*}
An extended arc sequence in $eC(w,w')$ or $\depG{w,w'}$ is a sequence of arcs $B_1, \dots, B_l$ in $\depG{w,w'}$ such that for $1 \leq i \leq l-1$ exactly one of the following conditions hold:
	\[t(B_i) = s(B_{i+1}), \quad t(B_i) = t(B_{i+1}), \quad s(B_i) = s(B_{i+1}),\quad s(B_i) = t(B_{i+1}).
\] 

Assume $CC(w,w')$ contains a red circle. Consider this circle in $eC(w,w')$, i.e. when the colours are the colours from $eC(w)$ and $eC(w')$.

By considering the different connection possibilities one can show that there are two green arcs $H_1$, $H_2$ in this circle in $eC(w,w')$ such that $H_1$ and $H_2$ are both left green arcs or both right green arcs, they are both above the x-axis or both below and they are connected by an extended black arc sequence.

This results in a black extended arc sequence from $r(G_1)-1$ to $r(G_2)$ and one from $r(G_1)$ to $r(G_2)-1$, where $G_1$ and $G_2$ are the images of $H_1$ and $H_2$, respectively, in $\depG{w,w'}$. But then the two conditions that $\depG{w,w'}$ imposes on $F_{r(G_2)}$ contradict each other. Thus, $\stb w \cap \stb{w'} = \emptyset$.

Conversely, we assume that $CC(w,w')$ contains only black and green circles. Analogously to \cite[Lemma 19]{stroppel} we construct a row strict tableau $w''$, such that $\fx{w''} \in \stb w \cap \stb {w'}$:  %(vgl Lemma \ref{fixed points}

In $CC(w,w')$ we mark all points with $j \leq 0$ by $\down$ and all points with $j >n$ by $\up$. Now we assign a $\up$ or a $\down$ to all $j \in \{1, \dots, n\} \setminus w_{\times}$ as follows:

If we have a black circle, we mark one point arbitrarily and then we follow the circle and alternatingly mark the points where we meet the x-axis by $\up$ or $\down$, such that each arc has a $\up$ at one end and a $\down$ at the other.
For green circles we start with a point already marked and go on as for black circles.

Now we define a row strict tableau $w''$ by the fact that $j \in w''_{\up}$ if and only if $j$ is a double entry or marked by $\up$ and $j \in w''_{\down}$ if and only if $j$ is a double entry or marked by $\down$. 

In Lemma \ref{fixed points} we already showed that $\fx{w''}$ is $N$-invariant.
Furthermore, one can calculate that $\fx{w''}$ satisfies the conditions of $\depG{w,w'}$.
\end{bew}

\begin{bem}
In the proof one can see a fact similar to \cite[Lemma 19]{stroppel}: If there are no red circles in $CC(w,w')$, the number of fixed points contained in $\stb w \cap \stb {w'}$ is at least $2^x$ for $x$ the number of black circles in $CC(w,w')$.
Indeed, there are two possible choices for every black circle and only one for every green circle. 
\end{bem}

\section{Iterated fibre bundles} 
\label{sec:iteratedFib}

By \cite{gunn} to each complex projective variety $X$ we can associated a topological Hausdorff 
space $X^{an}$, the associated analytic space with the same underlying set. If the projective variety is smooth, $X^{an}$ is a complex manifold. 

Following \cite{fung} we consider iterated fibre bundles:
\begin{defi} 
A space $X_1$ is an \emph{iterated fibre bundle} of
base type $(B_1, \dots, B_l)$ if there exist spaces $X_1, B_1, X_2,
B_2, \dots, X_l, B_l, X_{l+1}=pt$ and maps $p_1, p_2,
\dots, p_{l}$ such that $p_j:X_j \to B_j$ is a fibre bundle with
typical fibre $X_{j+1}=F_j$. Here fibre bundle means topological fibre bundle in the sense of \cite{hatcher}.
\end{defi}

The following theorem is the main theorem of this section. It generalises \cite{stroppel} and \cite{fung}.

\begin{satz} \label{bundles}
Let $w, w'$ be row strict tableaux of type $(i_1, \dots, i_m)$ and assume $\stb w \cap \stb {w'} \neq \emptyset$.
Then $(\stb w)^{an}$ and $(\stb w \cap \stb {w'})^{an}$ are iterated bundles of base type $(\mathbb{CP}^1, \dots, \mathbb{CP}^1)$, where there are as many terms as there are independent nodes in the associated dependence graph.
\end{satz}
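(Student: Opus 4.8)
The plan is to induct on the number of independent nodes in the relevant dependence graph, using the graphical description of $\stb w$ (Theorem \ref{abh(w)}) and of the intersection $\stb w \cap \stb {w'}$ (Corollary \ref{Abh(w,w')}) to peel off one $\mathbb{CP}^1$-factor at a time. By Remark \ref{unabh\"angige2} and Remark \ref{unabh\"angige3} the independent nodes correspond exactly to the left endpoints of black arcs (resp.\ black circles) in the extended cup diagram $eC(w)$ (resp.\ the circle diagram $CC(w,w')$), so I would phrase the induction in terms of removing one such black component from the diagram. The base case is when there are no independent nodes: then every subspace $F_{i_l}$ is forced (dependent) by the diagram conditions, so $\stb w$ (resp.\ the intersection) is a single point, which is the iterated fibre bundle of empty base type.

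For the inductive step, I would pick the \emph{leftmost} independent node, say labelled $F_j$, i.e.\ the left endpoint of the leftmost black arc/circle. The idea is that the only genuine freedom in choosing $F_j$ (given all subspaces indexed by smaller or already-determined nodes, and given the $N$-invariance constraints $F_{i_l}\subset N^{-1}F_{i_{l-1}}$) is the choice of a line in a $2$-dimensional space, namely $N^{-1}F_{j'}/$(something), where $F_{j'}$ is the independent-or-determined space immediately to its left; hence $F_j$ ranges over a $\mathbb{CP}^1$. Concretely I would define the base $B_1=\mathbb{CP}^1$ as this space of choices, define $p_1:(\stb w)^{an}\to B_1$ by sending a flag to its value at $F_j$, and identify the fibre $X_2$ over a point with the analogous variety for the row strict tableau (or pair) obtained by deleting the black arc/circle through $F_j$ from the diagram — equivalently, by contracting that part of the flag. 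One must check that this deleted diagram is again a legitimate dependence/circle diagram with one fewer independent node, that the fibre is independent of the basepoint (local triviality), and that $p_1$ is a topological fibre bundle; local triviality should follow because over a suitable affine chart of $\mathbb{CP}^1$ one can trivialise using the $\mathbb{C}^*$-action or an explicit frame, exactly as in Fung's argument in \cite{fung} and the Springer-fibre case in \cite{stroppel}.

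Two technical points need care. First, one must make sure the chosen node's freedom really is a full $\mathbb{CP}^1$ and not just a $\mathbb{C}$ or a point: this uses that $F_j$ is independent (not pinned by a green $e_l$/$f_l$ condition nor by a black $N^{-l}$ condition from a smaller node), together with Lemma \ref{dim} which guarantees $\dim N^{-1}F_{j'}-\dim F_{j'}=2$, so the Grassmannian of lines in the quotient is $\mathbb{CP}^1$. Second, for the intersection $\stb w\cap\stb{w'}$ one needs the hypothesis $\stb w\cap\stb{w'}\neq\emptyset$, which by Theorem \ref{rote} means $CC(w,w')$ has no red circle; this ensures that the two conditions coming from $\depG w$ and $\depG{w'}$ on each forced subspace are compatible, so the dependent spaces are genuinely well-defined along the whole fibration and the deletion procedure stays within diagrams having only black and green components.

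The main obstacle I expect is establishing local triviality of $p_1$ rigorously — i.e.\ producing, over an affine chart of the base $\mathbb{CP}^1$, an explicit algebraic family of frames that simultaneously trivialises all the dependent subspaces and the remaining independent ones, and checking this is compatible with reflecting/stacking diagrams when passing to the intersection. This is the step where Fung's and Stroppel--Webster's arguments must be genuinely adapted rather than quoted, since now the arcs carry labels $N^{-l}$ with $l>1$ and there are green ($e_l$ and $f_l$) arcs; the combinatorial lemmas (Proposition \ref{bogen}) are exactly what is needed to see that peeling the leftmost black component leaves a consistent smaller diagram, so the bookkeeping, while intricate, reduces to those lemmas plus the elementary fibre-bundle verification. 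Smoothness of $(\stb w)^{an}$ and $(\stb w\cap\stb{w'})^{an}$ then follows, as in \cite{fung}, because an iterated bundle of manifolds is a manifold.
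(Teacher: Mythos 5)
Your plan is correct and rests on the same two pillars as the paper's proof: each independent node accounts for one $\CP^1$ of freedom, and local triviality is established by explicit Fung-style charts over the standard cover of $\CP^1$, exactly as carried out in Example \ref{special cases}. The organisation, however, differs in a way worth recording. Rather than peeling one independent node at a time from the general dependence graph, the paper first eliminates all double entries via the isomorphism of Theorem \ref{Abb nach Springer} and then all green cups via the proposition in Section \ref{sec:iteratedFib} that quotients out the fixed vectors $e_l, f_l$; after these reductions only black cup diagrams for two equal Jordan blocks remain, and the induction runs on the cup diagram decomposition: in the nested case one removes the outer cup and obtains a non-trivial $\CP^1$-bundle (Lemma \ref{case2}), while in the non-nested case one obtains a globally \emph{trivial} product of two smaller components (Lemma \ref{case1}) whose iterated bundle structures are concatenated by Lemma \ref{trivialfib}. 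What the paper's route buys is that the delicate chart construction only ever has to be done for $F_1\subset N^{-1}\{0\}$ in the two model situations of Example \ref{special cases}; your route avoids proving the two reduction isomorphisms but must carry the green arcs and the labels $N^{-l}$ with $l>1$ through the trivialisation, which is precisely the step you identify as the main obstacle. One caveat: to see that the leftmost independent node varies over a full $\CP^1$ you need the equality $\dim N^{-1}F_{j'}=\dim F_{j'}+2$, which is not what Lemma \ref{dim} gives (only the inequality $\leq$); it requires the preceding fixed subspace to lie in $\im N$, and this too comes for free after the paper's reductions but needs a separate combinatorial argument in your direct approach.
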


For the rest of the section we drop the $^{an}$ from the notation and always work with the analytic spaces.

For the proof of Theorem \ref{bundles} we first take a look at two important special cases:
\begin{bsp} \label{special cases}
Let $n=4$, $k=2$. As mentioned in Example \ref{gewünschtes}, in this case there are two standard tableaux of type $(1,2,3,4)$: $S_1 = \young(43,21)$ and $S_2 = \young(42,31)$. \\ 
For these we have the following associated extended cup diagrams:
\begin{align*}
	eC(S_1) = \begin{tabular}[c]{c}\begin{tikzpicture}
			\begin{scope}
			\draw (0.4,0) node[above=-1.55pt] {$1$};
			\draw (0.8, 0) node[above=-1.55pt] {$2$};
			\draw (1.2,0) node[above=-1.55pt] {$3$};
			\draw (1.6,0) node[above=-1.55pt] {$4$};
			\draw (0.8,0) arc (180:360:0.2);
			\draw (0.4,0) arc (180:360:0.6);
			\end{scope}
			\end{tikzpicture}	\end{tabular} \hspace{2cm}
	eC(S_2) = \begin{tabular}[c]{c}\begin{tikzpicture}
			\begin{scope}
			\draw (0.4,0) node[above=-1.55pt] {$1$};
			\draw (0.8, 0) node[above=-1.55pt] {$2$};
			\draw (1.2,0) node[above=-1.55pt] {$3$};
			\draw (1.6,0) node[above=-1.55pt] {$4$};
			\draw (0.4,0) arc (180:360:0.2);
			\draw (1.2,0) arc (180:360:0.2);
			\end{scope}
			\end{tikzpicture} \end{tabular}
\end{align*}

\begin{enumerate}[1)]
	\item \label{S_2} Firstly, we consider $\com{S_2}$:
As one can see for example from the associated dependence graph, we have
\begin{align*}
	\com{S_2} = \{ F_1 \subset N^{-1}(\{0\}) \subset F_3 \subset N^{-2}(\{0\}) \}.
\end{align*}
We define $p: \com{S_2} \to \mathbb{P}(N^{-1}(\{0\})) = \CP^1$ via
	\[\big(F_1 \subset N^{-1}(\{0\}) \subset F_3 \subset N^{-2}(\{0\})\big) \mapsto F_1.
\]
This is a trivial fibre bundle with fibre a component of a smaller Springer fibre, namely the one with associated cup diagram  
\begin{tikzpicture}
			\begin{scope}
			\draw (0.4,0) node[above=-1.55pt] {$1$};
			\draw (0.8, 0) node[above=-1.55pt] {$2$};
			\draw (0.4,0) arc (180:360:0.2);
			\end{scope}
			\end{tikzpicture}: \\
Let $N_1:= N |_{\left\langle e_1, f_1\right\rangle}$ and let $N_2$ be the map induced by $N$ on $\C^4/\ker N \to \C^4/\ker N$. We take as trivialising neighbourhood the whole base space and get the following trivialisation:
\begin{align*}
	\{F_1 \subset N_1^{-1}(\{0\})\} \times \{G_1 \subset N_2^{-1}(\{0\})\} &\to p^{-1}\big(\{F_1 \subset N^{-1}(\{0\})\}\big) \\
	\big(F_1 \subset N^{-1}(\{0\}), G_1 \subset N_2^{-1}(\{0\}) \big) &\mapsto \big(F_1 \subset N_1^{-1}(\{0\}) \subset N_1^{-1}(\{0\}) + G_1 \\ &\subset N_1^{-1}(\{0\}) + N_2^{-1}(\{0\}) = \C^2 \oplus \C^2 = \C^4\big) \\
	\big(F_1 \subset N_1^{-1}(\{0\}), F_3/\ker N \subset \C^4/\ker N\big) &\mapsfrom \big(F_1 \subset N^{-1}(\{0\}) \subset F_3 \subset \C^4\big)
\end{align*} This is a homeomorphism and commutes with the projections.

\item \label{S_1} Now we consider the space $\com{S_1} = \{F_1 \subset F_2 \subset N^{-1}(F_1) \subset N^{-2}(\{0\})\}$. 
Again we define $p: \com{S_1} \to \mathbb{P}\big(N^{-1}(\{0\})\big) = \CP^1$ via		
	\[\big(F_1 \subset F_2 \subset N^{-1}(F_1) \subset N^{-2}(\{0\})\big) \mapsto F_1.
\]	We choose the standard covering of $\CP^1$: 
\begin{align*}
	U_1 := \{(x:y) | x \neq 0 \} \text{ and } U_2:= \{(x:y) | y \neq 0 \}
\end{align*}
We consider $(1: \lambda) \in U_1$. With our identifications this corresponds to $\left\langle e_1 + \lambda f_1\right\rangle \subset \left\langle e_1, f_1\right\rangle$. We have $N^{-1}\left\langle e_1 + \lambda f_1 \right\rangle = \left\langle e_1, f_1, e_2 + \lambda f_2 \right\rangle = \left\langle e_1 + \lambda f_1, f_1, e_2 + \lambda f_2\right\rangle$ and therefore we obtain $N^{-1}\left\langle e_1 + \lambda f_1 \right\rangle/\left\langle e_1 + \lambda f_1 \right\rangle = \left\langle f_1, e_2+ \lambda f_2\right\rangle$. We denote the isomorphism $\C^2 \to \left\langle f_1, e_2+\lambda f_2\right\rangle$ given by mapping the standard basis to $f_1, e_2+ \lambda f_2$ by $\alpha_{\lambda}$. 
We get the following trivialisation for $U_1$: 
\begin{align*}
	U_1 \times \{L \subset \C^2\} &\to p^{-1}(U_1) \\
	\big( (1: \lambda), L \subset \C^2\big) &\mapsto \big(\left\langle e_1+ \lambda f_1 \right\rangle \subset \left\langle e_1 + \lambda f_1 \right\rangle + \alpha_{\lambda}(L) \subset N^{-1}(\left\langle e_1  + \lambda f_1 \right\rangle) \subset \C^4\big)
\end{align*}
This map commutes with the projections, and it is a homeomorphism with inverse
\begin{align*}
	p^{-1}(U_1) &\to  U_1 \times \{L \subset \C^2\} \\
	\Big(&\left\langle  e_1+ \lambda f_1 \right\rangle \subset F_2 \subset N^{-1}(\left\langle e_1+ \lambda f_1 \right\rangle) \subset \C^4\Big)  
	\\ &\mapsto \big(F_1 \subset N^{-1}(\{0\})\big)\\ &\quad\quad \times  \Big(\alpha_{\lambda}^{-1}(F_2/\left\langle e_1+ \lambda f_1 \right\rangle) \subset \alpha_{\lambda}^{-1}(N^{-1}(\left\langle e_1+ \lambda f_1 \right\rangle)/\left\langle e_1+ \lambda f_1 \right\rangle) = \C^2\Big). 
\end{align*}
For $U_2$ we get an analogous trivialisation.
Altogether, $\com{S_1}$ is a non-trivial fibre bundle with fibre a component of a smaller Springer fibre, again the one with cup diagram \begin{tikzpicture}
			\begin{scope}
			\draw (0.4,0) node[above=-1.55pt] {$1$};
			\draw (0.8, 0) node[above=-1.55pt] {$2$};
			\draw (0.4,0) arc (180:360:0.2);
			\end{scope}
			\end{tikzpicture}.
\end{enumerate}
\end{bsp}

\begin{defi} We define the \textit{cup diagram decomposition} as follows:\\
If $C$ is a cup diagram with all cups nested inside a single cup $C'$, we say $C$ is in nested position. If $C$ is in nested position, then the cup diagram decomposition is $C = C' \ast D$ where $D$ is $C$ with $C'$ removed and numbering adjusted. \\
If this is not the case, then the cup diagram decomposition is $C = D \ast D'$, where $D$ consists of the cup $B$ with $s(B) = 1$ and of all the cups nested inside $B$ and $D'$ consists of the remaining cups with the numbering adjusted.

For an extended cup diagram $C$ let $|C|$ be twice the number of cups.
\end{defi}

\begin{bsp}
\begin{align*}
	\begin{tabular}[c]{c} \begin{tikzpicture}
			\begin{scope}
			\draw (-0.4,0) node[above=-1.55pt] {$1$};
			\draw (0,0) node[above=-1.55pt] {$2$};
			\draw (0.4,0) node[above=-1.55pt] {$3$};
			\draw (0.8, 0) node[above=-1.55pt] {$4$};
			\draw (1.2,0) node[above=-1.55pt] {$5$};
			\draw (1.6,0) node[above=-1.55pt] {$6$};
			\draw (2,0) node[above=-1.55pt] {$7$};
			\draw (2.4,0) node[above=-1.55pt] {$8$};
			\draw (0.4,0) arc (180:360:0.2);
			\draw (1.2,0) arc (180:360:0.2);
			\draw (0,0)   arc (180:360:1.0);
			\draw (-0.4,0) arc (180:360:1.4);
			\end{scope}
			\end{tikzpicture} \end{tabular}
			&=
			\begin{tabular}[c]{c}	\begin{tikzpicture}
			\begin{scope}
			\draw (0.4,0) node[above=-1.55pt] {$1$};
			\draw (0.8, 0) node[above=-1.55pt] {$2$};
			\draw (0.4,0) arc (180:360:0.2);
			\end{scope}
			\end{tikzpicture} \end{tabular}
			\ast
			\begin{tabular}[c]{c}	\begin{tikzpicture}
			\begin{scope}
			\draw (0,0) node[above=-1.55pt] {$1$};
			\draw (0.4,0) node[above=-1.55pt] {$2$};
			\draw (0.8, 0) node[above=-1.55pt] {$3$};
			\draw (1.2,0) node[above=-1.55pt] {$4$};
			\draw (1.6,0) node[above=-1.55pt] {$5$};
			\draw (2,0) node[above=-1.55pt] {$6$};
			\draw (0.4,0) arc (180:360:0.2);
			\draw (1.2,0) arc (180:360:0.2);
			\draw (0,0)   arc (180:360:1.0);
			\end{scope}
			\end{tikzpicture}	\end{tabular}		
\end{align*}

\begin{align*}\begin{tabular}[c]{c}
			\begin{tikzpicture}
			\begin{scope}
			\draw (-2.4,0) node[above=-1.55pt] {$1$};
			\draw (-2,0) node[above=-1.55pt] {$2$};
			\draw (-1.6,0) node[above=-1.55pt] {$3$};
			\draw (-1.2,0) node[above=-1.55pt] {$4$};
			\draw (-0.8,0) node[above=-1.55pt] {$5$};
			\draw (-0.4,0) node[above=-1.55pt] {$6$};
			\draw (0,0) node[above=-1.55pt] {$7$};
			\draw (0.4,0) node[above=-1.55pt] {$8$};
			\draw (0.8, 0) node[above=-1.55pt] {$9$};
			\draw (1.2,0) node[above=-1.55pt] {$10$};
			\draw (1.6,0) node[above=-1.55pt] {$11$};
			\draw (2,0) node[above=-1.55pt] {$12$};
			\draw (0.4,0) arc (180:360:0.2);
			\draw (1.2,0) arc (180:360:0.2);
			\draw (0,0)   arc (180:360:1.0);
			\draw (-0.8,0) arc (180:360:0.2);
			\draw (-2,0) arc (180:360:0.2);
			\draw (-2.4,0) arc (180:360:0.6);
			\end{scope}
			\end{tikzpicture}		\end{tabular}	
			&=
			\begin{tabular}[c]{c}  \begin{tikzpicture}
			\begin{scope}
			\draw (-2.4,0) node[above=-1.55pt] {$1$};
			\draw (-2,0) node[above=-1.55pt] {$2$};
			\draw (-1.6,0) node[above=-1.55pt] {$3$};
			\draw (-1.2,0) node[above=-1.55pt] {$4$};
			\draw (-2,0) arc (180:360:0.2);
			\draw (-2.4,0) arc (180:360:0.6);
			\end{scope}
			\end{tikzpicture}	\end{tabular}
			\ast
			\begin{tabular}[c]{c}			\begin{tikzpicture}
			\begin{scope}
			\draw (-0.8,0) node[above=-1.55pt] {$1$};
			\draw (-0.4,0) node[above=-1.55pt] {$2$};
			\draw (0,0) node[above=-1.55pt] {$3$};
			\draw (0.4,0) node[above=-1.55pt] {$4$};
			\draw (0.8, 0) node[above=-1.55pt] {$5$};
			\draw (1.2,0) node[above=-1.55pt] {$6$};
			\draw (1.6,0) node[above=-1.55pt] {$7$};
			\draw (2,0) node[above=-1.55pt] {$8$};
			\draw (0.4,0) arc (180:360:0.2);
			\draw (1.2,0) arc (180:360:0.2);
			\draw (0,0)   arc (180:360:1.0);
			\draw (-0.8,0) arc (180:360:0.2);
			\end{scope}
			\end{tikzpicture}	\end{tabular}
\end{align*}
 
\end{bsp}

\begin{defi}
For $n$ even, by $Y^n_S$ we denote the irreducible component of the Springer fibre associated to a standard tableau $S$ of type $(1, \dots, n)$ and of shape $(\frac{n}2, \frac{n}2)$.
\end{defi}
Note that we can associate an unique standard tableau $S$ of type $(1, \dots, n)$ and of shape $(\frac{n}2, \frac{n}2)$ to each cup diagram $C$ consisting of black cups such that $eC(S) = C$ by writing the numbers at the right end of the cups in the bottom line and the other ones in the top line.

\begin{lemma} \label{case1} Let $n$ be even and $Y^n_S$ be a component of the Springer fibre such that $eC(S)$ is not in nested position and let $eC(S) = eC(R) \ast eC(T)$ its cup diagram decomposition. Let $r = |eC(R)|$ and $t = |eC(T)|$. Then $Y^n_S$ is the total space of a trivial fibre bundle with base space $Y^r_R$ and fibre $Y^t_T$.
\end{lemma}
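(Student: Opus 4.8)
The plan is to carry out, in full generality, the computation of the first of the two special cases in Example \ref{special cases}, using the dependence-graph description of Theorem \ref{abh(S)}.

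\emph{Step 1: combinatorics of the decomposition.} First I would record that since $n=2k$ the diagram $eC(S)$ has no rays, so every point is an endpoint of a cup; as cups never cross, the outermost cup $B$ with $s(B)=1$ must satisfy $t(B)=r$, the cups nested inside $B$ tiling $\{2,\dots,r-1\}$. Hence $eC(R)$ uses precisely the points $1,\dots,r$ and the remaining cups live on $r+1,\dots,n$, which after renumbering by $j\mapsto j-r$ gives $eC(T)$; in particular the standard tableaux $R,T$ attached to these diagrams are $S$ restricted to its first $r$ columns, respectively its last $t$ columns (shifted). Since $B=\{1,r\}$ is a cup of $eC(S)$, in $\depG{S}$ it becomes the arc from node $0$ to node $r$ labelled $N^{-r/2}$, so by Theorem \ref{abh(S)} every flag of $Y^n_S$ has $F_r=N^{-r/2}(\{0\})=\ker N^{r/2}=:W$. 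Because $N$ has Jordan type $(n/2,n/2)$, $W=\langle e_1,\dots,e_{r/2},f_1,\dots,f_{r/2}\rangle$ has dimension $r$, $N|_W$ has type $(r/2,r/2)$ and the induced nilpotent $\overline N$ on $V/W$ has type $(t/2,t/2)$, both in their standard models for the inherited bases; thus $Y^r_R$ and $Y^t_T$ are well defined inside the full flag varieties of $W$ and $V/W$.

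\emph{Step 2: the conditions split.} Next I would note that $\depG{S}$ has no $\times$-arcs and no green arcs (the type is $(1,\dots,n)$ and $n=2k$), and that by Step 1 each of its black arcs has both endpoints in $\{0,\dots,r\}$ or both in $\{r,\dots,n\}$: the first family is exactly $\depG{R}$, the second exactly $\depG{T}$ under the identification of node $r+j$ with node $j$ and of $F_r=W$ with the zero subspace of $V/W$ (using $\overline N^{\,t/2}=0$ and $\overline N^{-m}(0)=N^{-m}(W)/W$). By Theorem \ref{abh(S)} it follows that a full flag $F_\bullet$ of $V$ lies in $Y^n_S$ if and only if $(F_1\subset\dots\subset F_r)\in Y^r_R$ and $(F_r/W\subset F_{r+1}/W\subset\dots\subset F_n/W)\in Y^t_T$.

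\emph{Step 3: global trivialisation.} Finally, writing $\pi\colon V\to V/W$ for the quotient map, I would define $\Phi\colon Y^r_R\times Y^t_T\to Y^n_S$ by
\[
  \Phi\left((F_1\subset\dots\subset F_r),\,(G_0\subset\dots\subset G_t)\right)
  =\left(F_1\subset\dots\subset F_r\subset\pi^{-1}(G_1)\subset\dots\subset\pi^{-1}(G_t)\right),
\]
with candidate inverse $F_\bullet\mapsto\left((F_1\subset\dots\subset F_r),\,(F_r/W\subset\dots\subset F_n/W)\right)$. By Step 2 both maps are well defined, they are visibly mutually inverse, and both are built from linear-algebraic constructions (inclusions, images and preimages of subspaces), hence are morphisms of varieties; so $\Phi$ is an isomorphism of varieties, in particular a homeomorphism of the associated analytic spaces. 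Since $\Phi$ intertwines the first projection of the product with the map $p\colon Y^n_S\to Y^r_R$, $F_\bullet\mapsto(F_1\subset\dots\subset F_r)$, the map $p$ exhibits $Y^n_S$ as the total space of a (globally) trivial fibre bundle with base $Y^r_R$ and fibre $Y^t_T$, which is the assertion.

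\textbf{Expected obstacle.} The hard part will be Step 1, namely that the cup-diagram decomposition partitions the index set as $\{1,\dots,r\}\sqcup\{r+1,\dots,n\}$ and that the tableaux $R,T$ extracted from the two halves really are the ones with extended cup diagrams $eC(R),eC(T)$; this rests on crossinglessness of cup diagrams together with the bijection between black cup diagrams and rectangular standard tableaux recalled before the statement. Once that is settled, Steps 2 and 3 are exactly the bookkeeping of the first special case of Example \ref{special cases}, now carried out for arbitrary $n$.
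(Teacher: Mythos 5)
Your argument is correct and follows essentially the same route as the paper: both proofs hinge on the observation that the outer cup of $eC(R)$ forces $F_r=N^{-r/2}(\{0\})$ to be a fixed subspace, after which projection onto the first $r$ steps of the flag exhibits $Y^n_S$ as a trivial bundle over $Y^r_R$. You merely write out explicitly the global trivialisation via the quotient $V\to V/F_r$ and the splitting of the dependence-graph conditions, which the paper leaves as ``analogous to Example \ref{special cases}''.
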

\begin{bew} We have
\begin{align*}
	Y^n_S = \{(F_1 \subset \dots \subset F_r \subset F_{r+1} \subset \dots \subset F_n) = F_{\bullet} | F_{\bullet} \text{ is } N \text{-invariant} \\ \text{and satisfy the conditions of } \depG{S}\}
\end{align*}
From the conditions of $\depG{S}$ we know that $F_r = N^{-\frac r2} \{0\}$, thus it is a fixed subspace of $\C^n$. Now, we consider the map
\begin{eqnarray*}
	p: Y^n_S &\to& Y^r_R \\
	(F_1 \subset \dots \subset F_n) &\mapsto& \left(F_1 \subset \dots \subset F_r = N^{- \frac r2}\{0\}\right).
\end{eqnarray*}
Analogous to Example \ref{special cases} \ref{S_2}) this is a trivial fibre bundle with fibre  $Y^t_T$.
\end{bew}

\begin{lemma}\label{case2}
Let $n$ be even and $Y^n_S$ be a component of the Springer fibre such that $eC(S)$ is in nested position and let $eC(S) = C \ast eC(T)$ its cup diagram decomposition. Let $t = |eC(T)|$. Then $Y^n_S$ is the total space of a non-trivial fibre bundle with base space $\CP^1$ and fibre $Y^t_T$.
\end{lemma}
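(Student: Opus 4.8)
The plan is to copy the argument of Example~\ref{special cases}~\ref{S_1}), with the cup-diagram decomposition telling us which $\CP^1$ to split off. Since $n=2k$ there are no double entries, so $eC(S)$ is an ordinary (uncoloured) cup diagram on the $n$ nodes $1,\dots,n$, and, being in nested position, its outermost cup $C'$ runs from $1$ to $n$; thus $eC(T)$ is $eC(S)$ with $C'$ removed and the remaining nodes relabelled $1,\dots,n-2$, so $t=n-2$. I would define $p: Y^n_S\to\mathbb{P}(N^{-1}(\{0\}))\cong\CP^1$ by $(F_1\subset\cdots\subset F_n)\mapsto F_1$. This is well defined and surjective: $N$-invariance forces $NF_1\subset F_0=\{0\}$, i.e.\ $F_1\subset\ker N=N^{-1}(\{0\})$, which is two-dimensional, and every line of $\ker N$ occurs; realising $Y^n_S$ inside a product of Grassmannians as in the examples shows $p$ is a morphism of varieties.

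Next I would identify the fibres. Fix $L\in\mathbb{P}(\ker N)$ and put $W_L:=N^{-(n-2)/2}(L)$, which has dimension $1+2\cdot\tfrac{n-2}{2}=n-1$ and carries the nilpotent $\bar N_L$ induced by $N$ on $W_L/L\cong\C^{n-2}$ (defined since $NW_L=N^{-(n-4)/2}(L)\subset W_L$ and $NL=0$). A rank count ($\bar N_L$ has rank $\dim(N^{-(n-4)/2}(L)/L)=n-4$, hence two-dimensional kernel) shows $\bar N_L$ is of Jordan type $(\tfrac{n-2}{2},\tfrac{n-2}{2})$. The claim is then $p^{-1}(L)\cong Y^{n-2}_T$ for this $\bar N_L$. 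To obtain it I would apply Proposition~\ref{bogen}~\ref{schwarzer}) to the black arc of $\depG{S}$ coming from $C'$ — it joins node $0$ to node $n$ and has width $n/2>1$ — getting a black arc sequence from node $1$ to node $n-1$; reading off the corresponding conditions of $\depG{S}$ and telescoping the widths yields $F_{n-1}=N^{-(n-2)/2}(F_1)=W_L$. Hence for every $F_\bullet\in p^{-1}(L)$ all of $F_2,\dots,F_{n-1}$ lie between $L$ and $W_L$, so $(F_2/L\subset\cdots\subset F_{n-1}/L)$ is a full flag in $W_L/L$, and the remaining conditions of $\depG{S}$ together with $N$-invariance translate, under this quotient, precisely into $\bar N_L$-invariance together with the conditions of $\depG{T}$ — this being exactly the dependence-graph version of the decomposition $eC(S)=C'\ast eC(T)$, checked case by case as for Theorem~\ref{abh(S)}. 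Thus $p^{-1}(L)\cong Y^{n-2}_T=Y^t_T$.

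For local triviality I would use the standard charts $U_1,U_2$ of $\CP^1=\mathbb{P}(\langle e_1,f_1\rangle)$: over $U_1$, with $L_\lambda=\langle e_1+\lambda f_1\rangle$, I choose isomorphisms $\alpha_\lambda: \C^{n-2}\to W_{L_\lambda}/L_\lambda$ depending algebraically on $\lambda$ and intertwining a fixed model nilpotent of type $(\tfrac{n-2}{2},\tfrac{n-2}{2})$ with $\bar N_{L_\lambda}$ (exactly like the $\alpha_\lambda$ in Example~\ref{special cases}~\ref{S_1})), and push flags forward to get a homeomorphism $U_1\times Y^t_T\to p^{-1}(U_1)$ over $U_1$; similarly over $U_2$. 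This exhibits $p$ as a fibre bundle with base $\CP^1$ and fibre $Y^t_T$. Non-triviality I would argue as in Example~\ref{special cases}~\ref{S_1}): the underlying rank-$(n-2)$ bundle $L\mapsto W_L/L$ is not trivial, so the two charts do not glue to a global trivialisation; already the $\CP^1$-bundle recording only $(F_1,F_2)$ is the projectivisation of the non-trivial rank-$2$ bundle $L\mapsto N^{-1}(L)/L$ (which over $\CP^1$ is isomorphic to $\mathcal{O}(1)\oplus\mathcal{O}(-1)$), and so is not $\CP^1\times\CP^1$.

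The step I expect to be the main obstacle is the fibre identification in the second paragraph: the bookkeeping that, after dividing by $L$, the conditions imposed by the nested cups of $\depG{S}$ on $F_2,\dots,F_{n-1}$ reproduce \emph{exactly} the conditions of $\depG{T}$ for $\bar N_L$ — no more, no fewer — so that the cup-diagram decomposition genuinely realises ``peeling off the outermost $\CP^1$''; establishing $F_{n-1}=W_L$ via the arc sequence and pinning down the Jordan type of $\bar N_L$ are the concrete ingredients. The local trivialisations and the non-triviality statement are then formal transcriptions of the worked example.
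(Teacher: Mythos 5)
Your proposal is correct and follows exactly the route the paper takes: the paper's entire proof consists of defining $p\colon (F_1\subset\dots\subset F_n)\mapsto (F_1\subset N^{-1}\{0\})$ and invoking Example \ref{special cases} \ref{S_1}), and your fibre identification via $F_{n-1}=N^{-(n-2)/2}(F_1)$, the induced nilpotent on $W_L/L$, and the $\alpha_\lambda$-trivialisations over the standard charts are precisely the details that reference is meant to supply. (The non-triviality discussion is no less rigorous than the paper's own, which asserts it without proof; it is in any case never used.)
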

\begin{bew}
Again, we consider the map
\begin{align*}
	p: Y^n_S \to \CP^1 \\
	(F_1 \subset \dots \subset F_n) \mapsto \left(F_1 \subset N^{-1}\{0\}\right).
\end{align*}
As in Example \ref{special cases} \ref{S_1}) this is a non-trivial fibre bundle with fibre  $Y^t_T$.
\end{bew}

\begin{bem}
The results stated in Lemmas \ref{case1} and \ref{case2} are shown in wider generality in \cite{fresse+melnikov} and in \cite{fresse}. In the latter reference, they are combined to define a family of smooth components of Springer fibres which contains the two-block components as a subfamily. 
\end{bem}

\begin{defi}
We call an iterated fibre bundle of base type $(\underbrace{\mathbb{CP}^1, \dots, \mathbb{CP}^1}_{l})$ an \textit{$l$-bundle}.
\end{defi}

\begin{lemma} \label{trivialfib}
Let $E = B \times F$ be a trivial fibre bundle and let $B$ be an $l_1$-bundle and let $F$ be an $l_2$-bundle. Then $E$ is an $(l_1+l_2)$-bundle.
\end{lemma}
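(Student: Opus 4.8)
The plan is to prove the statement by induction on $l_1$, the number of stages in a tower witnessing that $B$ is an $l_1$-bundle. In the base case $l_1 = 0$ the space $B$ is a point, so $E = B \times F$ is homeomorphic to $F$, which is an $l_2$-bundle by hypothesis and hence a $(0+l_2)$-bundle.

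For the inductive step I would peel off the first stage of the iterated bundle structure of $B$: write $B = X_1$ with a fibre bundle $p_1 \colon X_1 \to B_1$, $B_1 = \CP^1$, whose typical fibre $X_2$ is an $(l_1 - 1)$-bundle (it carries the truncated tower $X_2, B_2, X_3, \dots$). Now form the composite $q := p_1 \circ \op{pr}_{X_1} \colon E = X_1 \times F \to B_1$. The main thing to check is that $q$ is again a topological fibre bundle, with typical fibre $X_2 \times F$: if $U \subseteq B_1$ is open with $p_1^{-1}(U) \cong U \times X_2$ as bundles, then $q^{-1}(U) = p_1^{-1}(U) \times F \cong U \times (X_2 \times F)$, and these trivialisations are compatible over overlaps because the corresponding trivialisations of $p_1$ are (taking the product with the fixed space $F$ commutes with restriction to open subsets of the base and with transition functions). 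By the inductive hypothesis applied to $X_2$, which is an $(l_1 - 1)$-bundle, and $F$, which is an $l_2$-bundle, the fibre $X_2 \times F$ is an $((l_1 - 1) + l_2)$-bundle. Concatenating the single stage $q \colon E \to B_1 = \CP^1$ with a tower of length $(l_1 - 1) + l_2$ for $X_2 \times F$ produces a tower of length $l_1 + l_2$, all of whose base spaces are $\CP^1$ and whose last term is a point. Thus $E$ is an $(l_1 + l_2)$-bundle.

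The only non-formal ingredient is the assertion that $q = p_1 \circ \op{pr}_{X_1}$ is locally trivial, and this is immediate from local triviality of $p_1$; the rest is bookkeeping with the defining tower. So I expect no real obstacle here. This lemma is exactly what is needed to glue together the fibre bundle decompositions of Lemmas \ref{case1} and \ref{case2} and run the induction proving Theorem \ref{bundles}.
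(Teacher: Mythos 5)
Your proof is correct and is essentially the paper's argument: the paper simply writes down the full tower $E = X_1\times F,\ X_2\times F,\ \dots,\ X_{l_1}\times F,\ pt\times F = F,\ F_2,\ \dots,\ pt$ at once, which is exactly what your induction on $l_1$ unrolls to, with the same key observation that $p_j\circ\op{pr}$ is locally trivial with fibre $X_{j+1}\times F$. No discrepancy worth noting.
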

\begin{bew}
Let $B=B_1, \dots, B_{l_1}, B_{l_1+1} = \pt$ be the total spaces of the iterated fibre bundles for $B$ and let $F = F_1, \dots, F_{l_2}, F_{l_2+1} = \pt$ the ones for $F$. Then $E$ is an iterated fibre bundle with total spaces $E, B_2 \times F, \dots, B_{l_1} \times F, \pt \times F = F, F_2, \dots, F_{l_2}, \pt$ and the associated maps.
\end{bew}

%%%%%%%%%%%%%%%%%%%%%%%%%%%%%%%%%%
Since the cup diagram decomposition just works for diagrams with only black cups, we need the following proposition to reduce to this case. It connects a generalised irreducible component to an irreducible component associated to a cup diagram that arises from the one of the generalised component by deleting all green cups.

\begin{prop} \label{löschegrüne} Let $w$ be row strict tableau of type $(1, \dots, n)$ of shape $(n-k,k)$ and let $n'$ be twice the number of black cups in $eC(w)$. Then there is a standard tableau $S$ of type $(1, \dots, n')$ and shape $(\frac{n'}2,\frac{n'}2)$ such that we have an isomorphism of varieties $\st w \to \com S$.  
\end{prop}
\begin{bew}
Let $S$ be the standard tableau corresponding to the cup diagram that arises from $eC(w)$ by deleting the green cups.
In $\depG{w}$ let 
\begin{align*}
	J_1 &:= \{j \in \{1, \dots, n\} | j = t(G) \text{ for } G \text{ a green arc} \} \text{ and} \\J_2 &:= \{j \in \{1, \dots, n\} | j = s(G) \text{ for } G \text{ a green arc} \}.
\end{align*}
 Let $J := J_1 \cup J_2$. We define $V_1 := \left\langle e_{b(G)} | t(G) = j \text{ for some } j \in J_1 \right\rangle$ and \\
 $V_2 := \left\langle f_{k+1-b(G)} | s(G) = j \text{ for some } j \in J_2 \right\rangle$. Furthermore, let $\alpha: \C^n \to \C^n/(V_1 \oplus V_2)$ be the projection. 

Now we define 
\begin{align*}
	\vp: \st w &\to \com S \\
	(F_1 \subset \dots \subset F_n) & \mapsto \big(\alpha(F_{j_1}) \subset \dots \subset \alpha(F_{j_r})\big)
\end{align*}
where $\{j_1, \dots, j_r\} = \{1, \dots, n\} \setminus J$.

Then we have $\alpha(F_{j_r}) = \alpha(F_n) = \C^n/(V_1 \oplus V_2) = \C^{n'}$.

Let $N': \C^{n'} \to \C^{n'}$ be defined by $N' e'_i = e'_{i-1}$ and $N' f'_j = f'_{j-1}$ for $i,j = 1, \dots, \frac{n'}2$, where 
	\[\{e'_1, \dots , e'_{n'}\} = \{e_1 + (V_1 \oplus V_2), \dots, e_{n-k}+ (V_1 \oplus V_2)\} \setminus \{e_j + (V_1 \oplus V_2) | e_j \in V_1\}
\]
 and the same for the $f'_j$'s. 

Since $(F_1 \subset \dots \subset F_n)$ is $N$-invariant and satisfies the conditions of $\depG{w}$ and $N'$ acts in the same way as $N$ acts on the subspaces associated to black arcs, $\big(\alpha(F_{j_1}) \subset \dots \subset \alpha(F_{j_r})\big)$ is $N'$-invariant and satisfies the conditions of $\depG{S}$.

Furthermore, the map given above is a morphism of varieties by an argumentation analogous to the one in the proof of Theorem \ref{Abb nach Springer}.

Analogously, we can construct an inverse map by defining the subspaces associated to a green cup as given by $\depG{w}$, which is also a morphism of varieties. 
\end{bew}

\begin{bew}[Proof of Theorem \ref{bundles}] 
First we assume $n=2k$ and show the theorem for standard tableaux $S$ of type $(1, \dots, n)$, i.e. for irreducible components in the Springer fibre.
We do induction on $n$:\\
If $k=2$, then by Example \ref{special cases} in both cases we get $2$-bundles by including $\CP^1 \stackrel{\id}{\rightarrow} \CP^1$ where the fibre is just one point.

Now, we consider $\com S$ with $|eC(S)| =2k$.
If $eC(S)$ is in nested position, then by Lemma \ref{case2} $\com S$ is the total space of a fibre bundle with base space $\CP^1$ and fibre $\com T$ with $|eC(T)| = 2k-2$, where $eC(T)$ arises from the cup diagram decomposition. By induction $\com T$ is a $k-1$-bundle, thus $Y_S$ is a $k$-bundle. 

In the other case, by Lemma \ref{case1} $\com S$ is the total space of a trivial fibre bundle with base space $Y_R$ and fibre $Y_T$, where $|eC(R)|, |eC(T)| \leq 2k-2$ and $|eC(R)| + |eC(T)| = 2k$. By induction, $Y_R$ and $Y_T$ are $\frac{|eC(R)|}2-$ or $\frac{|eC(T)|}2$-bundles, respectively. Now the assertion follows from Lemma \ref{trivialfib}.

By the above, for $S$ a standard tableau of type $(1, \dots, n)$ and $n=2k$, $Y_S$ is an iterated fibre bundle with $\CP^1$ as base spaces, hence it is smooth as variety by \cite[III.9,10]{hartshorne}.
Hence, by Proposition \ref{löschegrüne}, for $w$ a row strict tableau of type $(1, \dots, n)$ and shape $(n-k,k)$ with $n-k > k$, $\st w$ is also an iterated fibre bundle and smooth. Thus, by Theorem \ref{Abb nach Springer}, for $w$ a row strict tableau of type $(i_1, \dots, i_m)$, $\stb w$ is an iterated fibre bundle and smooth.

So, by Theorem \ref{Abb nach Springer} we can restrict ourselves to generalised irreducible components in Springer fibres, i.e. to those associated to cup diagrams without $\times \times$.
By Proposition \ref{löschegrüne} it suffices to consider irreducible components associated to standard tableaux with equally long rows, i.e. those associated to cup diagrams with only black cups.
Therefore, by the above discussion the assertion is shown for generalised irreducible components of Spaltenstein varieties. 
The assertion about the number of independents is transferred, since by Remark \ref{unabhängige2} the number of independents is given by the number of black cups and this stays the same in the reduction process.

Now we consider intersections of generalised irreducible components. Again, we only have to consider the ones with $eC(w,w')$ just consisting of black circles. This reduction is possible, since analogously as above with Theorem \ref{Abb nach Springer} we can restrict ourselves to intersections of generalised components in Springer fibres. Then we can delete the green circles by the analogon of Proposition \ref{löschegrüne}. These analogons can be shown by computations similar to the ones above. 

After that, we distinguish whether there is a circle in $eC(w,w')$ which contains all the others or not, and work with a decomposition of the circle diagrams analogously to the one of the cup diagrams above. In the case where there is a circle containing all the others, we get an analogon to Lemma \ref{case2} and in the other case one to Lemma \ref{case1}. Then the assertion follows inductively as above. The assertion about the number of independents is also true, because by Remark \ref{unabhängige3} the number of independents is given by the number of black circles.
\end{bew}

\section{Consequences and cohomology} 
\label{sec:newSection}

\begin{bem}
In the proof of Theorem \ref{bundles} we showed that the $\stb w$ are smooth as varieties, hence $(\stb w)^{an}$ or $(\stb w \cap \stb {w'})^{an}$ are complex manifolds.

Since the dimensions of manifolds in a fibre bundle add up, by Theorem \ref{bundles} we get that the dimension of $(\stb w)^{an}$ or $(\stb w \cap \stb {w'})^{an}$ is given by the number of independents in the associated dependence graph. Since by \cite{gaga}
the dimension of $X$ as variety coincides with the dimension of $X^{an}$ as complex manifold, the same holds for $\stb w$ and $\stb w \cap \stb {w'}$.
For $\stb w$ by Remark \ref{unabhängige2} the dimension coincides with the number of black cups in $eC(w)$. 

In particular, this holds for $\stb S = \comb S$. Thus, all the irreducible components of the Spaltenstein variety have the same dimension. This is true, because for a standard tableau $S$ the number of black cups in $eC(S)$ is exactly the number of entries in $S_{\down} \smallsetminus S_{\times}$ which is always $k- \#S_{\times}$.
For irreducible components of the Springer fibre with possibly more than two blocks this was already shown in \cite{spal}. The equidimensionality for general Spaltenstein varieties is also mentioned in \cite{spal} and proved in wider generality in \cite[II.5.16]{spalb}. 
\end{bem}

In the following, $H^*(X;\C)$ denotes singular cohomology with coefficients in $\C$.

\begin{lemma} \label{specseq} Let $(F \to E \to B)$ be a fibre bundle with $F$ connected, $\dim_{\C} H^n(F;\C)$ and $\dim_{\C} H^n(B;\C)$ finite for all $n$, $B$ simply connected, paracompact and Hausdorff. Assume that $H^*(F; \C)$ and $H^*(B;\C)$ are concentrated in even degrees and $H^{r}(F;\C) = 0$ for $r \geq s$ and $H^r(B;\C) = 0$ for $r \geq t$. \\Then $H^*(E; \C) \cong H^*(B; \C) \otimes_{\C} H^*(F;\C)$ as vector spaces. Furthermore, $\dim_{\C} H^n(E;\C)$ is finite for all $n$, $H^*(E;\C)$ is concentrated in even degrees and $H^r(E;\C) = 0$ for $r \geq s+t$.
\end{lemma}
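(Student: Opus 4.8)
The plan is to use the Leray--Serre spectral sequence of the fibre bundle $F \to E \to B$ with coefficients in $\C$. Since $B$ is simply connected, the local system is trivial, so the $E_2$-page is $E_2^{p,q} = H^p(B;\C) \otimes_\C H^q(F;\C)$, converging to $H^{p+q}(E;\C)$. The paracompactness and Hausdorffness of $B$, together with finite-dimensionality of the cohomologies, guarantee the spectral sequence is well-behaved (bounded in each total degree). The key observation is that by hypothesis $H^*(B;\C)$ and $H^*(F;\C)$ are concentrated in even degrees, so $E_2^{p,q} = 0$ whenever $p$ or $q$ is odd. The differential $d_r$ has bidegree $(r, 1-r)$, which always changes the parity of the total degree by $1$ and, more to the point, for $r \geq 2$ sends a lattice point with $p,q$ both even to one with at least one of the new coordinates odd (since $1-r$ is even iff $r$ is odd, but then $r$ is odd so the $p$-shift is odd). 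Hence every differential $d_r$, $r \geq 2$, has either its source or its target in a zero group, so all differentials vanish and the spectral sequence degenerates at $E_2$.

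First I would invoke this degeneration to conclude that the associated graded of $H^n(E;\C)$ (with respect to the filtration) is $\bigoplus_{p+q=n} H^p(B;\C)\otimes H^q(F;\C)$. Since we are working over the field $\C$, the extension problems are trivial: $H^n(E;\C) \cong \bigoplus_{p+q=n} H^p(B;\C) \otimes_\C H^q(F;\C)$ as vector spaces, and summing over $n$ gives $H^*(E;\C) \cong H^*(B;\C) \otimes_\C H^*(F;\C)$. Finite-dimensionality of each $H^n(E;\C)$ is immediate from this formula together with the hypothesis that each $H^p(B;\C)$ and $H^q(F;\C)$ is finite-dimensional (the sum defining $H^n$ is finite). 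Concentration in even degrees follows because a summand $H^p(B)\otimes H^q(F)$ can be nonzero only if both $p$ and $q$ are even, hence $n = p+q$ is even.

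For the vanishing range, I would argue directly from the tensor product formula: if $H^r(F;\C) = 0$ for $r \geq s$ and $H^r(B;\C) = 0$ for $r \geq t$, then for $n \geq s+t$ any decomposition $n = p+q$ forces $p \geq t$ or $q \geq s$ (otherwise $p \leq t-1$ and $q \leq s-1$ give $n \leq s+t-2 < s+t$), so each summand $H^p(B;\C)\otimes H^q(F;\C)$ vanishes and thus $H^n(E;\C) = 0$.

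The main obstacle — really the only nontrivial point — is justifying the degeneration of the spectral sequence cleanly, i.e. checking that the parity argument genuinely kills every $d_r$ for $r\geq 2$ and that the spectral sequence actually converges under the stated topological hypotheses (for this one needs the bundle to be, say, locally trivial over a paracompact base, which is part of the hypothesis). Everything else — the triviality of extensions over a field, finite-dimensionality, and the vanishing range — is a routine bookkeeping consequence of the resulting tensor product decomposition.
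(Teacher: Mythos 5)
Your proposal is correct and follows essentially the same route as the paper: pass to the Leray--Serre spectral sequence (the paper notes, via Spanier, that a fibre bundle over a paracompact Hausdorff base is a fibration, and uses $\pi_1(B)=0$ for the simple coefficient system), observe that concentration in even degrees forces collapse at $E_2$, and use field coefficients to split the filtration and read off the tensor product decomposition, finite-dimensionality, parity, and the vanishing range. No gaps.
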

\begin{proof}
By \cite[§2.7]{spanier} a fibre bundle with paracompact and Hausdorff base space is a fibration.
The system of local coefficients is simple because $\pi_1(B) = 0$. 
Thus by \cite[5.5]{mccleary} we have $E_2^{p,q} \cong H^p(B;\C) \otimes_k H^q(F;\C)$ in the Leray-Serre spectral sequence. Because of the concentration of the cohomologies in even degrees, the spectral sequence collapses at level $2$ and we have $E_{\infty}^{p,q} \cong H^p(B;\C) \otimes_{\C} H^q(F;\C)$. Since the spectral sequence converges to $H^*(E;\C)$ and ${\C}$ is a field, we have 
	\[H^l(E;\C) \cong \bigoplus_{a+b=l} E_{\infty}^{a,b} \cong \bigoplus_{a+b=l} H^a(B;\C) \otimes_{\C} H^b(F;\C).
\]
 The rest follows from $H^*(E;\C) = \bigoplus_l H^{l}(E; \C)$. 
\end{proof}

In the following, we write $H^*(X)$ for $H^*(X^{an}; \C)$. 

\begin{kor}\label{cohomology}
Let $w, w'$ be row strict tableaux of type $(i_1, \dots, i_m)$ and assume $\stb w \cap \stb {w'} \neq \emptyset$. Then
\begin{align*}
	H^*(\stb w) \cong \left(\C[x]/(x^2)\right)^{\otimes u} \\
	H^*(\stb w \cap \stb {w'}) \cong \left(\C[x]/(x^2)\right)^{\otimes v}
\end{align*}
as vector spaces, where $u$ and $v$ are the number of independents.
\end{kor}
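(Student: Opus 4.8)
The plan is to combine Theorem \ref{bundles} with Lemma \ref{specseq} and induct on the number of $\CP^1$-factors in the iterated bundle. Recall that $H^*(\CP^1;\C) \cong \C[x]/(x^2)$ as graded vector spaces, with $x$ in degree $2$; in particular this cohomology is concentrated in even degrees, is finite-dimensional in each degree, and vanishes in degrees $\geq 3$. Moreover $\CP^1$ is compact (hence paracompact), Hausdorff and simply connected, so it satisfies all the hypotheses imposed on the base space in Lemma \ref{specseq}.

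First I would establish, by induction on $l$, the following auxiliary statement: if $X$ is an $l$-bundle (an iterated fibre bundle of base type $(\CP^1,\dots,\CP^1)$ with $l$ terms), then $X$ is nonempty and connected, $H^*(X;\C) \cong (\C[x]/(x^2))^{\otimes l}$ as graded vector spaces, this cohomology is concentrated in even degrees and finite-dimensional in each degree, and $H^r(X;\C) = 0$ for $r \geq 2l+1$. For $l=0$ the space $X$ is a point and everything is immediate. For the inductive step, write $X = X_1$ with $p_1\colon X_1 \to B_1 = \CP^1$ a fibre bundle whose fibre $F_1 = X_2$ is an $(l-1)$-bundle. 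By the inductive hypothesis $F_1$ is connected, $H^*(F_1;\C)$ is finite-dimensional in each degree and concentrated in even degrees, and $H^r(F_1;\C) = 0$ for $r \geq 2(l-1)+1 = 2l-1$; together with the properties of $\CP^1$ recorded above, all hypotheses of Lemma \ref{specseq} are satisfied (with $s = 2l-1$, $t = 3$). Hence
\begin{align*}
  H^*(X;\C) &\cong H^*(\CP^1;\C) \otimes_{\C} H^*(F_1;\C) \\
  &\cong \C[x]/(x^2) \otimes_{\C} \left(\C[x]/(x^2)\right)^{\otimes (l-1)} = \left(\C[x]/(x^2)\right)^{\otimes l},
\end{align*}
and Lemma \ref{specseq} also yields that $H^*(X;\C)$ is concentrated in even degrees and finite-dimensional in each degree; since the right-hand side above has top degree $2l$, we get $H^r(X;\C) = 0$ for $r \geq 2l+1$. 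Finally $X$ is nonempty and connected because it is the total space of a fibre bundle with nonempty connected base $\CP^1$ and nonempty connected fibre $F_1$. This completes the induction.

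With the auxiliary statement in hand the corollary is immediate: by Theorem \ref{bundles}, under the hypothesis $\stb w \cap \stb{w'} \neq \emptyset$ the analytic spaces $(\stb w)^{an}$ and $(\stb w \cap \stb{w'})^{an}$ are iterated fibre bundles of base type $(\CP^1,\dots,\CP^1)$ with, respectively, $u$ and $v$ base factors, where $u$ and $v$ are the numbers of independent nodes in the associated dependence graphs. Applying the auxiliary statement with $l = u$ and $l = v$ and recalling the convention $H^*(\stb w) = H^*((\stb w)^{an};\C)$ (and similarly for the intersection) gives the two claimed isomorphisms.

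The only point that genuinely requires care is the bookkeeping needed to re-apply Lemma \ref{specseq} at each stage of the induction: one must carry along, as part of the inductive statement, not just the isomorphism type of the cohomology but also connectedness of the fibre (so that one may speak of \emph{the} fibre and invoke the lemma) and a finite vanishing range for the fibre cohomology (so that the parameter $s$ in Lemma \ref{specseq} exists). Once the inductive hypothesis is formulated to propagate nonemptiness, connectedness, even-degree concentration, finiteness, and the vanishing bound simultaneously, each step is a direct quotation of Theorem \ref{bundles} and Lemma \ref{specseq} with no further computation.
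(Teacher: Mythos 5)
Your proof is correct and follows essentially the same route as the paper: apply Theorem \ref{bundles} to realise the spaces as iterated $\CP^1$-bundles, then induct using the Leray--Serre argument of Lemma \ref{specseq}, starting from a point and verifying at each stage that the base $\CP^1$ and the (connected) fibre satisfy the lemma's hypotheses. Your explicit bookkeeping of connectedness, even-degree concentration and the vanishing range is a slightly more careful write-up of exactly what the paper's proof does implicitly.
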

\begin{bew}
By Theorem \ref{bundles} we know that $(\stb w)^{an}$ and $(\stb w \cap \stb {w'})^{an}$ are $l$-bundles. In each of the iterated fibre bundles the fibre is connected, since the fibres themselves are iterated fibre bundles.

$\CP^1$ is simply connected, paracompact and Hausdorff and $H^*(\CP^1; \C) \cong \C[x]/(x^2)$, in particular $H^0(\CP^1; \C) = \C = H^2(\CP^1; \C)$ and $H^l(\CP^1; \C) = 0$ for $l=1$ or $l \geq 3$. 

Now, the claim inductively follows from Lemma \ref{specseq} starting with $H^*(\pt) = \C$, since the lemma also states that the conditions for the next step are fulfilled. 
Therefore, for the total space $E$ of an $l$-bundle we have 
\begin{align*}
	H^*(E; \C) \cong \underbrace{\C[x]/(x^2)\otimes_{\C} \dots \otimes_{\C} \C[x]/(x^2)}_{l} \otimes_{\C} \C \cong \left(\C[x]/(x^2)\right)^{\otimes l}.
\end{align*}
\end{bew}

\begin{defi}\label{F}
Define $F: \{\text{circle diagrams}\} \to \{\text{vector spaces over }\C \}$ as follows:\\
Let $C$ be a circle diagram consisting of $b$ black circles, $g$ green circles and $r$ red circles. Then 
	\[F(C) := \underbrace{\C[x]/(x^2)\otimes_{\C} \dots \otimes_{\C} \C[x]/(x^2)}_b \otimes_{\C} \underbrace{\C\otimes_{\C} \dots \otimes_{\C} \C}_g \otimes_{\C} \underbrace{0 \otimes_{\C} \dots \otimes_{\C} 0}_r.
\]
\end{defi}
The concept of this definition will be made clear using coloured TQFT's in the next section.

\begin{satz}
\label{toPartI}  ~\\ 
	The following diagram commutes:
	\[\xymatrix{
	\text{pairs of row strict tableaux}  \ar[rrrr]_{\quad \quad \quad (w,w') \mapsto F(CC(w,w'))} \ar[dd] &&&&  \text{vector spaces} \ar[dd]^{\id}\\
	  \\
	 \text{pairs of }\stb w \ar[rrrr]_{(\stb w, \stb {w'}) \mapsto H^*(\stb w \cap \stb {w'}) } &&&&  \text{vector spaces}\\
	 }\]
		
\end{satz}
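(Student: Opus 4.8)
The plan is to reduce the commutativity of the square to a single isomorphism of vector spaces and then read both sides off from results that are already available. Since the right-hand vertical arrow is the identity and the two horizontal arrows only record assignments on objects, commutativity of the diagram is equivalent to the assertion that
$F(CC(w,w')) \cong H^*(\stb w \cap \stb {w'})$
for every pair $(w,w')$ of row strict tableaux of type $(i_1,\dots,i_m)$. I would prove this isomorphism by the dichotomy furnished by Theorem \ref{rote}, according to whether or not $CC(w,w')$ contains a red circle, computing the left-hand side from Definition \ref{F} and the right-hand side from Corollary \ref{cohomology}.

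First, suppose $CC(w,w')$ contains at least one red circle. Then the number $r$ of red circles is positive, so one of the tensor factors in Definition \ref{F} is $0$ and therefore $F(CC(w,w')) = 0$. On the other hand Theorem \ref{rote} gives $\stb w \cap \stb {w'} = \emptyset$, and with the usual convention $H^*(\emptyset;\C) = 0$ both vector spaces vanish; so the square commutes on this pair. (This case is really the reason the definition of $F$ was set up with a vanishing factor for each red circle.)

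Next, suppose $CC(w,w')$ has no red circle, and write $b$ and $g$ for the numbers of its black and green circles. From Definition \ref{F} one obtains $F(CC(w,w')) = (\C[x]/(x^2))^{\otimes b} \otimes_{\C} \C^{\otimes g} \cong (\C[x]/(x^2))^{\otimes b}$, since an iterated tensor product of copies of $\C$ over $\C$ is $\C$. By Theorem \ref{rote} the intersection $\stb w \cap \stb {w'}$ is non-empty, so Corollary \ref{cohomology} applies and yields $H^*(\stb w \cap \stb {w'}) \cong (\C[x]/(x^2))^{\otimes v}$, with $v$ the number of independent nodes in the dependence graph for $(w,w')$; finally Remark \ref{unabh�ngige3} identifies $v$ with the number $b$ of black circles of $CC(w,w')$. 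Hence both sides are isomorphic to $(\C[x]/(x^2))^{\otimes b}$, which finishes this case and hence the proof.

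The theorem is at heart a bookkeeping statement, so I do not expect a genuine obstacle in its proof: all the content is carried by the results it invokes, namely the iterated fibre bundle structure of Theorem \ref{bundles}, the resulting cohomology computation of Corollary \ref{cohomology}, the emptiness criterion of Theorem \ref{rote}, and the combinatorial matching of independents with black circles in Remark \ref{unabh�ngige3}. The only points that need a little care are the conventions $H^*(\emptyset;\C)=0$ and $\C^{\otimes g}\cong\C$, and verifying that the black, green and red circle counts are used consistently across Definition \ref{F}, Theorem \ref{rote} and Remark \ref{unabh�ngige3}.
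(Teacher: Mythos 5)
Your proof is correct and follows exactly the route the paper takes: the paper's own proof is the one-line citation of Theorem \ref{rote}, Remark \ref{unabh�ngige3} and Corollary \ref{cohomology}, which you have simply spelled out case by case.
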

\begin{bew}
This follows from Theorem \ref{rote}, Remark \ref{unabhängige3} and Corollary \ref{cohomology}.
\end{bew}

\section{Coloured Cobordisms}
\label{sec:neuerAbschnitt}

Let $\Cob$ be the category of two-dimensional cobordisms. By \cite{kock} 
this monoidal category is generated under composition and disjoint union by the cobordisms 	\[\comult, \mult, \unit, \counit, \cobid, \twist.
\] Furthermore, these generators are subject to an explicit list of relations (see. e.g. \cite{kock}) saying that the image of the circle under a symmetric monoidal functor should be a commutative Frobenius algebra.

Now we consider coloured cobordisms, i.e. the boundaries of the generators are coloured black, green or red.

\begin{defi} \label{ColCob} Let $\ColCob$ be the monoidal category generated under composition and disjoint union by 
\begin{center}
~\\[-4.7ex] 
{\setlength{\tabcolsep}{-1pt}
\begin{tabular}{cccccccccc}
  {\boxenlenb{multss}{6}}
	&{\boxenlenb{multsg}{6}}%
	&{\boxenlenb{multsr}{6}}%
	&{\boxenlenbx{multsg}{6}}
	&{\boxenlenb{multgg}{6}}%
	&{\boxenlenb{multgg2}{6}}%
	&{\boxenlenb{multgr}{6}}%
	&{\boxenlenbx{multsr}{6}}
		&{\boxenlenbx{multgr}{6}}
			&{\boxenlenb{multrr}{6}} \\[2ex]
			{\boxenlenby{multss}{6}}
				&{\boxenlenby{multsg}{6}}
				&{\boxenlenby{multsr}{6}}
				&{\boxenlenbd{multsg}{6}}
				&{\boxenlenby{multgg}{6}} 
				&{\boxenlenby{multgg2}{6}}
				&{\boxenlenby{multgr}{6}}
				&{\boxenlenbd{multsr}{6}}
				&{\boxenlenbd{multgr}{6}}
				&{\boxenlenby{multrr}{6}}\\[2ex]
				{\boxenlenb{ids}{2.5}} 
				&{\boxenlenb{idg}{2.5}}
				&{\boxenlenb{idr}{2.5}}
				&{\boxenlenb{units}{3}}
				&{\boxenlenby{units}{3}}\\[2ex]
				{\boxenlenb{twistss}{6}}
				&{\boxenlenb{twistsg}{6}}
				&{\boxenlenb{twistsr}{6}}
				&{\boxenlenby{twistsg}{6}}
				&{\boxenlenb{twistgg}{6}}
				&{\boxenlenb{twistgr}{6}}
				&{\boxenlenby{twistsr}{6}}
				&{\boxenlenby{twistgr}{6}}
				&{\boxenlenb{twistrr}{6}}
\end{tabular}}\\[1.2ex]
\end{center}
subject to the relations for $\ColCob$. 
The relations for $\ColCob$ consist of all the fitting colourings of the relations for $\Cob$, i.e. if the cobordisms of a relation can both be coloured such that the basic cobordisms they consist of are in the list and the boundaries are coloured in the same way, then the relation exists in this colouring. For an explicit list of the relations see \cite[A.2]{me}.
\end{defi}

\begin{bsp}
An example for an object in $\ColCob$: 
\begin{tabular}[c]{c}
\begin{tikzpicture}
\draw[ultra thick, red] (0,0) circle (2mm);
\draw[thick] (0,0.5) circle (2mm);
\draw[dashed,thick, green!50!black] (0,1) circle (2mm);
\draw[thick] (0,1.5) circle (2mm);
\end{tikzpicture}
\end{tabular}\\
An example for a morphism in $\ColCob$:
\boxenlen{BspColCob}{2}
\end{bsp}

\begin{lemma}
$\ColCob$ is a symmetric monoidal category.
\end{lemma}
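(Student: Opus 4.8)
The plan is to verify the three defining features of a symmetric monoidal category — the monoidal structure, the associativity/unit coherence, and the symmetry braiding — directly from the presentation given in Definition \ref{ColCob}, reducing everything to the already-known case of $\Cob$. First I would record that $\ColCob$ comes with a disjoint union operation on objects (finite disjoint unions of coloured circles) and on morphisms (side-by-side juxtaposition of coloured cobordisms), with the empty $1$-manifold as monoidal unit; these are well defined because the generators are closed under the horizontal juxtaposition used to build morphisms, and the colours of boundary circles are simply carried along. The bifunctoriality of $\otimes$ (the interchange law $(f_1\otimes g_1)\circ(f_2\otimes g_2)=(f_1\circ f_2)\otimes(g_1\circ g_2)$) is a topological identity of cobordisms that holds in $\Cob$; since every relation of $\Cob$ that admits a consistent colouring is imposed in $\ColCob$, and this particular identity is purely geometric (it holds on the nose, not only up to relation), it descends to $\ColCob$.

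Next I would produce the associator and unitor isomorphisms: for $\Cob$ these are identity cobordisms (disjoint union is strictly associative and unital on the nose), and the same cylinders, now with their boundary circles coloured, serve as the associator and unitors in $\ColCob$; they are isomorphisms because the cylinder is its own inverse up to the cylinder relation, which is a coloured instance of a $\Cob$-relation. The coherence pentagon and triangle axioms then hold automatically, since all the maps involved are (coloured) identities. For the symmetry, one takes the family of twist cobordisms; the relevant twists appear among the listed generators precisely in all colour combinations $ss, sg, sr, gg, gr, rr$ (together with their reflections), so every braiding $c_{X,Y}$ for objects $X,Y$ built from coloured circles can be assembled by horizontal and vertical composition of these generating twists. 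That $c_{X,Y}$ is invertible with inverse $c_{Y,X}$, and that it satisfies the hexagon axioms and the symmetry relation $c_{Y,X}\circ c_{X,Y}=\id$, are again coloured instances of the corresponding relations in $\Cob$ (the Yang–Baxter / naturality relations for the twist), which are in the imposed relation set of $\ColCob$ by the clause ``all the fitting colourings of the relations for $\Cob$''.

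The one point requiring genuine care — and the main obstacle — is checking that the colouring constraint does not obstruct any of these axioms: one must make sure that for each coherence diagram (pentagon, triangle, hexagons, symmetry) the cobordisms appearing can be coloured \emph{consistently}, i.e.\ that a valid colouring of the source/target objects extends to a valid colouring of every intermediate cobordism in the diagram, so that the relation is actually present in $\ColCob$. Concretely, in $\Cob$ these diagrams are filled only by identity cylinders and twists, and the colour of a circle is preserved by both; hence any colouring of the boundary objects propagates uniquely and consistently, and every needed relation is a legitimate ``fitting colouring''. Once this bookkeeping is done, the proof is essentially a transport of the symmetric-monoidal structure on $\Cob$ along the faithful colour-decoration, so I would phrase it as: the forgetful functor $\ColCob\to\Cob$ (erase colours) together with the fact that $\Cob$ is symmetric monoidal and all structure morphisms of $\Cob$ lift canonically to $\ColCob$ yields the claim, with the coherence and symmetry identities inherited relation-by-relation.
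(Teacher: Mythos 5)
Your proposal is correct and is essentially the paper's argument written out in full: the paper's proof is the single observation that the symmetric monoidal structure of $\Cob$ (strict disjoint union plus the twist) carries over because the twist relations exist in all possible colourings, which is exactly the consistency check you identify as the one point requiring care.
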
 
\begin{bew}
This holds analogously to $\Cob$, since the twist relations exist in all possible colourings.
\end{bew}

\begin{satz} \label{functorColCob}
Let $V= \mathbb{C}[x]/(x^2)$, let $B$ be the black circle in $\ColCob$, $R$ the red one and $G$ the green one and let $\op{Vect}$ be the monoidal category of vector spaces with ordinary tensor product. There exists a symmetric monoidal functor $F_C=F_{\ColCob}: \ColCob \to \op{Vect}$ given by 
\begin{align} \label{star}
\begin{array}{rl}
	F_C(B) = V,\quad F_C(G) &= \mathbb{C},\quad F_C(R) = 0, \\
	F_C(\text{generator}) &= \text{map from table below}. 
	\end{array}
\end{align}

\begin{center}
	\begin{tabular}{|l|l|}
	\hline
		{\boxenlenb{multss}{5}} & \parbox{3cm}{~\\[-0.5ex]$1 \otimes 1 \mapsto 1$\\ 
					 $x \otimes 1 \mapsto x$\\
					 $1 \otimes x \mapsto x$\\
					 $x \otimes x \mapsto 0$\\[-2.3ex]}\\
		\hline
		{\boxenlenb{multsg}{5}} & \parbox{3cm}{~\\[-0.5ex]$1 \otimes 1 \mapsto 1$\\ 
					 $x \otimes 1 \mapsto 0$\\[-2.3ex]}\\
			\hline
		{\boxenlenb{multsr}{5}} & \parbox{3cm}{~\\[-0.5ex] $1 \otimes 0 \mapsto 0$\\ 
					 $x \otimes 0 \mapsto 0$\\[-2.3ex]}\\
		\hline
		{\boxenlenbx{multsg}{5}} & \parbox{3cm}{~\\[-0.5ex] $1 \otimes 1 \mapsto 1$\\ 
					 $1 \otimes x \mapsto 0$\\[-2.3ex]}\\
		\hline
		{\boxenlenb{multgg}{5}} & $1 \otimes 1 \mapsto 1$\\ 
			\hline
		{\boxenlenb{multgg2}{5}} & $1 \otimes 1 \mapsto 0$\\ 
				\hline
		{\boxenlenb{multgr}{5}} & $1 \otimes 0 \mapsto 0$\\ 
		\hline
		{\boxenlenbx{multsr}{5}} & \parbox{3cm}{~\\[-0.5ex] $0 \otimes 1 \mapsto 0$\\ 
					 $0 \otimes x \mapsto 0$\\[-2.3ex]}\\
			\hline
		{\boxenlenbx{multgr}{5}} & $0 \otimes 1 \mapsto 0$\\ 
			\hline
		{\boxenlenb{multrr}{5}} & $0 \otimes 0 \mapsto 0$\\ 
		\hline
		{\boxenlenby{multss}{5}} & \parbox{3cm}{~\\[-0.5ex] $1 \mapsto x \otimes 1 + 1 \otimes x$\\
						 $x \mapsto x \otimes x$\\[-2.3ex]}\\
				\hline
				\end{tabular}
				\hspace{2cm}
		\begin{tabular}{|l|l|}
				\hline
		{\boxenlenby{multsg}{5}} & $1 \mapsto x \otimes 1 $\\
		\hline
		{\boxenlenby{multsr}{5}} & $0 \mapsto 0 \otimes 0$\\
		\hline
		{\boxenlenbd{multsg}{5}} & $1 \mapsto 1 \otimes x$\\
				\hline
		{\boxenlenby{multgg}{5}} & $1 \mapsto 0 \otimes 0 $\\
		\hline
		{\boxenlenby{multgg2}{5}} & $0 \mapsto 0 \otimes 0$\\
		\hline
		{\boxenlenby{multgr}{5}} & $0 \mapsto 0 \otimes 0$\\
		\hline
		{\boxenlenbd{multsr}{5}} & $0 \mapsto 0 \otimes 0$\\
		\hline
		{\boxenlenbd{multgr}{5}} & $0 \mapsto 0 \otimes 0$\\
		\hline
		{\boxenlenby{multrr}{5}} & $0 \mapsto 0 \otimes 0$\\
		\hline
		{\boxenlenb{ids}{2}} & \parbox{3cm}{~\\[-0.5ex]$1 \mapsto 1$\\
					  $x \mapsto x$\\[-2.3ex]}\\
		\hline
		{\boxenlenb{idg}{2}} & $1 \mapsto 1$\\
		\hline
		{\boxenlenb{idr}{2}} & $0 \mapsto 0$\\
		\hline
		{\boxenlenb{units}{3}}  & $1 \mapsto 1$\\
		\hline
		{\boxenlenby{units}{3}} &\parbox{3cm}{~\\[-0.5ex] $1 \mapsto 0$\\
						 $x \mapsto 1$\\[-2.3ex]}\\
		\hline
		\text{Twists} & $a \otimes b \mapsto b \otimes a$\\
		\hline
	\end{tabular}
\end{center}
\end{satz}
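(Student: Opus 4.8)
The plan is to exploit the presentation of $\ColCob$ given in Definition~\ref{ColCob}: it is generated, under composition and disjoint union, by the listed generators, subject to the fitting colourings of the defining relations of $\Cob$. Consequently, to produce the monoidal functor $F_C$ it suffices to do two things. First, extend the object assignment monoidally: a disjoint union of coloured circles is sent to the tensor product of the corresponding copies of $V$, $\C$, $0$, and the empty object to $\C$; this is immediate and makes $F_C$ strictly monoidal on objects. Second, check that the assignment of the table respects every defining relation of $\ColCob$. Granting the second point, functoriality on all of $\ColCob$ is automatic, strict monoidality holds because $\op{Vect}$ carries the ordinary tensor product, and symmetry holds because each twist generator is sent to the flip map and $(\op{Vect},\otimes)$ is symmetric; together with the Lemma that $\ColCob$ is symmetric monoidal, this yields a symmetric monoidal functor.

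For the relations I would split into groups. On the all-black generators the table specifies $m(1\otimes 1)=1$, $m(x\otimes 1)=m(1\otimes x)=x$, $m(x\otimes x)=0$, $\iota(1)=1$, $\Delta(1)=x\otimes 1+1\otimes x$, $\Delta(x)=x\otimes x$, $\epsilon(1)=0$, $\epsilon(x)=1$, which make $V=\C[x]/(x^2)$ into the commutative Frobenius algebra underlying Khovanov homology. Since (e.g.\ by \cite{kock}) symmetric monoidal functors $\Cob\to\op{Vect}$ correspond exactly to commutative Frobenius algebras, every all-black colouring of every defining relation is automatically respected.

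It remains to treat the colourings that involve red or green. If a fitting colouring of a relation has a red circle on the source or target, then $F_C$ of that object is a tensor product with a factor $0$, hence $0$, so both sides of the relation are the unique map in a zero hom-space and the relation holds; the few colourings in which red occurs only on an internal circle are disposed of the same way, since the corresponding composite factors through $0$. For the colourings that involve green but no red, $\dim F_C(G)=1$ and $\dim F_C(B)=2$, so each such relation is a finite numerical identity. The check is organised by observing that the black--green interaction maps realise $\C$ as the cyclic $V$-module $V/(x)$ (so, for instance, the mixed multiplication $V\otimes\C\to\C$ is $1\otimes 1\mapsto 1,\ x\otimes 1\mapsto 0$), with the two green multiplications recording the two ways of merging a pair of green circles (nested versus unnested) and the unit/counit conventions being restrictions of $\iota$ and $\epsilon$; with these identifications, the mixed associativity, coassociativity, (co)unit and Frobenius relations reduce to the already-established identities in $V$, and the handful of purely green relations are checked directly. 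The explicit list of relations to inspect is \cite[A.2]{me}.

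The main obstacle is precisely this last verification: one must run through all fitting colourings of the Frobenius relations involving green and confirm that the two green multiplications (and their transposes) occur only in relations compatible with their respective interpretations, so that no inconsistency arises. This is elementary but somewhat tedious bookkeeping, and it rests on reading the generator list of Definition~\ref{ColCob} and the fitting-colouring convention correctly; everything else in the proof is either classical (the black part) or formal (the red part and the monoidal/symmetric structure).
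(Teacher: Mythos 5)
Your proposal is correct and follows essentially the same route as the paper: define $F_C$ on generators, note that functoriality and (symmetric) monoidality are then formal, and verify the fitting colourings of the defining relations, deferring the full list to \cite[A.2]{me}. In fact you organise the verification more systematically than the paper does (all-black colourings via the Frobenius-algebra correspondence, red colourings via zero objects, green colourings by direct computation), whereas the paper only exhibits two sample relation checks.
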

\begin{bew}
For $F_C$ to be a monoidal functor it is enough to define it on the generators of the monoidal category, hence there is a unique functor (if it exists) satisfying \eqref{star}. One has to check that $F_C$ satisfies all the relations of $\ColCob$ (which is the list of \cite{kock} in all possible colourings), for example: 
\begin{align*}
		&F_C\left(\boxenleny{frobenius2gsgs}{1.25}\right)  = \left(\id_S \otimes F_C\left( \boxenlenby{multsg}{5}\right) \right)\circ \left(F_C\left( \boxenlenb{multss}{5}\right)\otimes \id_G\right)\\
	&=\left(\begin{array}{ccccc} 
	1 \otimes 1 &\mapsto&  1 \otimes x \otimes 1&\mapsto & x \otimes 1\\
	x \otimes 1 &\mapsto&  x \otimes x \otimes 1 & \mapsto & 0\\
	\end{array} \right)
\end{align*}
\begin{align*}
		&F_C\left(\boxenlen{frobenius1gsgs}{0.75}\right)  = F_C\left( \boxenlenb{multsg}{5}\right) \circ F_C\left(\boxenlenby{multsg}{5} \right)\\
	&=\left(\begin{array}{ccccc} 
	1 \otimes 1 &\mapsto& 1 &\mapsto &x \otimes 1\\
	x \otimes 1 &\mapsto& 0  & \mapsto & 0\\
	\end{array} \right)
\end{align*}
The functor is symmetric since the twist in $\ColCob$ is sent to the twist in $\op{Vect}$.
\end{bew}

Note that on objects $F_{\ColCob}$ is defined in the same way as the $F$ in Definition \ref{F}.

\begin{defi}\label{Colmult}
For a circle diagram $CC(w,w')$ where the points $-1,-2, \dots, -(n-k)+|w_{\times}|$ are not all occupied, we add green circles containing all the others until this is the case. We call the resulting circle diagram $CC^+(w,w')$. 

Note that $F_C(CC(w,w')) = F_C(CC^+(w,w'))$ and now all circle diagrams associated to row strict tableaux of type $(i_1, \dots, i_m)$ have the same size.

We define a multiplication as in \cite[5.4]{stroppel2} via $F_C(CC^+(w,w')) \otimes F_C(CC^+(v,v')) \to F_C(CC^+(w,v'))$, $f \otimes g \to fg$ where $fg= 0$ if $w'\neq v$ and otherwise $fg= F_C(C)$ where $C$ is the coloured cobordism from 
$CC(w',v)$ on top of $CC(w,w')$ to $CC(w,v)$ which contracts the parts belonging to $w'$.
\end{defi}

\begin{satz} \label{algebra}
The multiplication from Definition \ref{Colmult} induces an associative algebra structure on $\bigoplus_{w,w'}H^*(\stb w \cap \stb{w'})$. 
\end{satz}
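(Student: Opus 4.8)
The plan is to transport the statement, via the diagrammatic description already established, into a purely combinatorial assertion about coloured circle diagrams and the functor $F_C$, and then to run the convolution-algebra argument of Khovanov and of Stroppel--Webster in the coloured setting.

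First I would use Theorem \ref{toPartI}: it gives a canonical isomorphism $H^*(\stb w\cap\stb{w'})\cong F(CC(w,w'))$ for every pair $(w,w')$, and by the remark after Definition \ref{F} and in Definition \ref{Colmult} one has $F(CC(w,w'))=F_C(CC(w,w'))=F_C(CC^+(w,w'))$, the passage to $CC^+$ only adding green circles (which $F_C$ sends to $\C$) and having the effect that all the relevant circle diagrams have the same size. Summing, $\bigoplus_{w,w'}H^*(\stb w\cap\stb{w'})\cong\bigoplus_{w,w'}F_C(CC^+(w,w'))$ as graded vector spaces. Hence it suffices to prove that the multiplication of Definition \ref{Colmult} — which is defined entirely on the right-hand side, as $F_C$ applied to a surgery cobordism — is associative (and, as a bonus, unital). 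This reduces everything to the combinatorics of coloured circle diagrams together with the fact, proved in Theorem \ref{functorColCob}, that $F_C$ is a well-defined symmetric monoidal functor on $\ColCob$.

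Next comes well-definedness, which has two parts. \emph{(i)} The coloured cobordism $C$ in Definition \ref{Colmult} — obtained by stacking the reflected diagram for $(v,v')$ on top of that for $(w,w')$, matching the middle rays (when $w'=v$) and contracting the resulting tubes — must be a genuine morphism of $\ColCob$, i.e.\ expressible as a composite of the generators of Definition \ref{ColCob} with their prescribed colourings. The contraction is a sequence of elementary surgeries, each merging two circles or splitting one, and one checks that in every case the incident circles, coloured by the rule for circle diagrams (red if a side carries more than one outer point, green if the component meets a newly added point, black otherwise), together with the colour forced on the output, form one of the listed basic coloured cobordisms. This works because the colouring obeys the dominance order red $>$ green $>$ black under merging — a merge is red if any input is red, green if any input is green and none red, black only when both inputs are black — and symmetrically for splittings; this is exactly the content of the relation list \cite[A.2]{me} and mirrors \cite[5.4]{stroppel2}. \emph{(ii)} $F_C(C)$ must not depend on the order of the elementary surgeries; but any two orders differ by a sequence of instances of the commutative Frobenius relations of $\Cob$ in admissible colourings, which are equalities already in $\ColCob$, and $F_C$, being a functor, sends them to the same linear map. (The convention $fg=0$ for $w'\ne v$ is consistent, since then no matching occurs.) With this, associativity is formal: for $f\in F_C(CC^+(w,w'))$, $g\in F_C(CC^+(v,v'))$, $h\in F_C(CC^+(u,u'))$, both $(fg)h$ and $f(gh)$ vanish unless $w'=v$ and $v'=u$; assuming this, stack the three diagrams $CC(u,u')$, $CC(w',u)$, $CC(w,w')$ and let $C_{\mathrm{tot}}$ be the cobordism contracting all tubes belonging to $w'$ and to $u$. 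Computing $(fg)h$ performs the $w'$-surgeries first, $f(gh)$ performs the $u$-surgeries first; by part (ii) both equal $F_C(C_{\mathrm{tot}})(f\otimes g\otimes h)$. For a unit one takes $1_w\in F_C(CC^+(w,w))$ equal to $1$ on every circle (noting $CC(w,w)$ has no red circle, since $\stb w\cap\stb w=\stb w\ne\emptyset$ by Theorem \ref{rote}); the tables of Theorem \ref{functorColCob} for the merge/split generators show $1_w$ is a two-sided identity on the summands indexed by $w$, so $\sum_w 1_w$ is the unit of $\bigoplus_{w,w'}H^*(\stb w\cap\stb{w'})$.

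The main obstacle is the colour bookkeeping in the well-definedness step: one must verify that the colouring of circles in $CC(w,w')$ is stable under surgery — that the colour of a merged or split circle is precisely the one forced by the generating list of Definition \ref{ColCob} — and that no coloured cobordism outside that list ever arises in a contraction. I would organise this around the dominance order on colours together with the parity and outer-endpoint counting already used in the proof of Theorem \ref{rote}, going through the handful of merge and split patterns case by case; this is where the explicit relation list \cite[A.2]{me} is needed, and it is essentially the only point that goes genuinely beyond the Springer-fibre arguments of \cite{stroppel,stroppel2}.
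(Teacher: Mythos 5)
Your proposal is correct and follows essentially the same route as the paper, whose entire proof is the one-line observation that associativity holds because the coloured associativity (Frobenius) relations hold in $\ColCob$ and $F_C$ is a functor (Theorem \ref{functorColCob}). Your additional detail --- checking that the surgery cobordism is a genuine morphism of $\ColCob$, that $F_C$ of it is independent of the order of elementary surgeries, and the construction of a unit --- simply makes explicit what the paper leaves implicit by its reference to the relation list in \cite[A.2]{me}.
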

\begin{bew}
The multiplication is associative, since the coloured associativity relations hold in $\ColCob$.
\end{bew}

It would be interesting to find an 
algebraic formulation of the data of a symmetric monoidal functor from $\ColCob$ to 
finite dimensional vector spaces
(in analogy to the case of $\Cob$ where such a functor can be described equivalently by the structure of a commutative Frobenius algebra).

\begin{bem}(Connection to category $\EuScript{O}$)\\ 
Note that the algebra structure for Spaltenstein varieties is the same as for Springer fibres of smaller dimension. A similar phenomenon arises in Lie theory in the context of parabolic category $\EuScript{O}$. By the Enright-Shelton equivalence singular blocks of parabolic category $\EuScript{O}$ for $\mathfrak{gl}_{m+n}$ are equivalent to regular blocks for smaller $m$ and $n$ (see \cite[Proposition 11.2]{ES}). In fact, in \cite{BSIII} this analogy was made precise by constructing an equivalence between modules over our diagram algebras and blocks of parabolic category $\EuScript{O}$.

In our setup, the Springer fibre corresponds to the principal block $\EuScript{O}_0^{\mathfrak{p}_{n-k,k}}$ of parabolic category $\EuScript{O}$ for the Lie algebra $\mathfrak{gl}_n$ where the parabolic has two blocks of size $n-k$ and $k$, respectively (cf. \cite{stroppel}). 
A Spaltenstein variety of type $(i_1, \dots, i_m)$ correspond to a block $\EuScript{O}_{\nu}^{\mathfrak{p}_{n-k,k}}$, where $\nu = (i_1, i_2-i_1, \dots, i_m-i_{m-1})$ is the corresponding partition of $n$. 
In this block, the simple modules are labelled by row strict tableaux of shape $(k, n-k)$ of type $(i_1, \dots, i_m)$ (cf. \cite{brundan}) as are the generalised irreducible components of $Sp(i_1, \dots, i_m)$. 

Whereas Theorem \ref{Abb nach Springer} can be used to reduce from Spaltenstein varieties to smaller Springer fibres, the counterpart on the category $\EuScript{O}$ side is the (non-trivial) Enright-Shelton equivalence. 
Diagrammatically, this equivalence is obvious. 
\end{bem}

%%%%%%%%%%%%%%%%%%%%%%%%%%%%%%%%%%%%%%%%%%%%%%%%%%%%%%%%%%%%%%%%%%%%%%%%%%%%%%%%%%%%%%%%%%%%%%%%%%%%%%%%%%%%%%%%%%%%%
%\providecommand{\bysame}{\leavevmode\hbox to3em{\hrulefill}\thinspace}
%\providecommand{\MR}{\relax\ifhmode\unskip\space\fi MR }
%% \MRhref is called by the amsart/book/proc definition of \MR.
%\providecommand{\MRhref}[2]{%
%  \href{http://www.ams.org/mathscinet-getitem?mr=#1}{#2}
%}
%\providecommand{\href}[2]{#2}

\end{document}